\documentclass[11pt]{article}

\usepackage[left=1in,top=1in,right=1in,bottom=1in,letterpaper]{geometry}

\usepackage[utf8]{inputenc} 
\usepackage[T1]{fontenc}    
\usepackage{hyperref}       
\usepackage{url}            
\usepackage{booktabs}       
\usepackage{amsfonts}       
\usepackage{nicefrac}       
\usepackage{microtype}      
\usepackage{xcolor}         

\usepackage{color, colortbl}
\definecolor{LightCyan}{rgb}{0.88,1,1}
\usepackage{mathrsfs}
\usepackage{amssymb}
\title{\bf A Projection-free Algorithm for Constrained Stochastic Multi-level Composition Optimization}
\date{}

%

\author{Tesi Xiao\thanks{Department of Statistics, University of California, Davis. \texttt{texiao@ucdavis.edu}.  } 
\and Krishnakumar Balasubramanian\thanks{Department of Statistics, University of California, Davis \texttt{kbala@ucdavis.edu}. } 
\and Saeed Ghadimi\thanks{Department of Management Sciences, University of Waterloo. \texttt{sghadimi@uwaterloo.ca}.}
}

\usepackage{amsmath, amsthm, amsfonts}
\usepackage{algorithm, algorithmic}
\usepackage{enumitem, mathtools}

\newcommand{\proj}[0]{\Pi}

\DeclareMathOperator*{\argmin}{arg\,min}

\def\vx{x}
\def\vy{y}
\def\vz{z}

\def\vu{u}
\def\vv{v}
\def\vd{d}
\def\ve{e}
\def\vg{g}
\def\vf{f}
\def\vG{G}
\def\vJ{J}

\def\reals{\mathbb{R}} 
\def\setX{\mathcal{X}} 
\def\setF{\mathscr{F}} 

\renewcommand{\Pr}{\mathrm{Pr}}
\newcommand{\E}{\mathbb{E}}

\newtheorem{theorem}{Theorem}
\newtheorem{proposition}{Proposition}
\newtheorem*{remark}{Remark}
\newtheorem{lemma}[theorem]{Lemma}
\newtheorem{assumption}{Assumption}

\newtheorem{definition}[theorem]{Definition}

\begin{document}

\maketitle

\begin{abstract}%
We propose a projection-free conditional gradient-type algorithm for smooth stochastic multi-level composition optimization, where the objective function is a nested composition of $T$ functions and the constraint set is a closed convex set. Our algorithm assumes access to noisy evaluations of the functions and their gradients, through a stochastic first-order oracle satisfying certain standard unbiasedness and second-moment assumptions. We show that the number of calls to the stochastic first-order oracle and the linear-minimization oracle required by the proposed algorithm, to obtain an $\epsilon$-stationary solution, are of order $\mathcal{O}_T(\epsilon^{-2})$ and $\mathcal{O}_T(\epsilon^{-3})$ respectively, where $\mathcal{O}_T$ hides constants in $T$. Notably, the dependence of these complexity bounds on $\epsilon$ and $T$ are separate in the sense that changing one does not impact the dependence of the bounds on the other. For the case of $T=1$, we also provide a high-probability convergence result that depends poly-logarithmically on the inverse confidence level. Moreover, our algorithm is parameter-free and does not require any (increasing) order of mini-batches to converge unlike the common practice in the analysis of stochastic conditional gradient-type algorithms.
\end{abstract}

\section{Introduction}

We study projection-free algorithms for solving the following stochastic multi-level composition optimization problem
\begin{equation}\label{eq:opt-multi}
    \underset{\vx\in \setX}{\min} \quad F(\vx) := f_1 \circ \cdots \circ \vf_{T} (\vx),
\end{equation}
where $\vf_i: \reals^{d_{i}} \rightarrow \reals^{d_{i-1}}, i=1, \cdots, T ~(d_0=1)$ are continuously differentiable functions, the composite function $F$ is bounded below by $F^\star>-\infty$ and $\setX\subset\reals^d$ is a closed convex set. We assume that the exact function values and derivatives of $\vf_i$'s are not available. In particular, we assume that $\vf_i(\vy) = \E_{\xi_i}[G_i(\vy, \xi_i)]$ for some random variable $\xi_i$. Our goal is to solve the above optimization problem, given access to noisy  evaluations of $\nabla f_i$'s and $f_i$'s. 

There are two main challenges to address in developing efficient projection-free algorithms for solving~\eqref{eq:opt-multi}. First, note that denoting the transpose of the Jacobian of $f_i$ by $\nabla f_i$, the gradient of the objective function $F(x)$ in~\eqref{eq:opt-multi}, is given by $\nabla F(x) = \nabla f_T(y_T)\nabla f_{T-1}(y_{T-1}) \cdots \nabla f_1(y_1) $, where $y_i = f_{i+1} \circ \cdots \circ f_T(x)$ for $1 \leq i < T$, and $y_T = x$. Because of the nested nature of the gradient $\nabla F(x)$, obtaining an unbiased gradient estimator in the stochastic first-order setting, with controlled moments, becomes non-trivial. Using naive stochastic gradient estimators lead to oracle complexities that depend exponentially on $T$ (in terms of the accuracy parameter). Next, even when $T=1$ in the stochastic setting, projection-free algorithms like the conditional gradient method or its sliding variants invariably require increasing order of mini-batches\footnote{We discuss in detail about some recent works that avoid requiring increasing mini-batches, albeit under stronger assumptions, in Section~\ref{sec:relwork}.}~\cite{lan2016conditional,reddi2016stochastic, hazan2016variance,qu2018non, yurtsever2019conditional}, which make their practical implementation infeasible. 

In this work, we propose a projection-free conditional gradient-type algorithm that achieves \emph{level-independent} oracle complexities (i.e., the dependence of the complexities on the target accuracy is independent of $T$) using only \emph{one sample} of $(\xi_i)_{1\leq i \leq T}$ in each iteration, thereby addressing both of the above challenges. Our algorithm uses moving-average based stochastic estimators of the gradient and function values, also used recently by~\cite{ghadimi2020single} and \cite{balasubramanian2020stochastic}, along with an inexact conditional gradient method used by~\cite{balasubramanian2021zeroth} (which in turn is inspired by the sliding method by~\cite{lan2016conditional}). In order to establish our oracle complexity results, we use a novel merit function based convergence analysis. To the best of our knowledge, such an analysis technique is used for the first time in the context of analyzing stochastic conditional gradient-type algorithms.

\subsection{Preliminaries and Main Contributions}\label{sec:prelim}
We now introduce the technical preliminaries required to state and highlight the main contributions of this work. Throughout this work, $\|\cdot\|$ denotes the Euclidean norm for vectors and the Frobenius norm for matrices. We first describe the set of assumptions on the objective functions and the constraint set. 
\begin{assumption}[Constraint set]\label{aspt:constraint}
The set $\setX\subset \reals^d$ is convex and closed with $\underset{x,y \in \setX}{\max}~\|x-y\| \leq D_\setX$. 
\end{assumption}
\begin{assumption}[Smoothness]\label{aspt:lipschitz-T-level}
All functions $\vf_1, \dots, \vf_T$ and their derivatives are Lipschitz continuous with Lipschitz constants $L_{\vf_i}$ and $L_{\nabla \vf_i}$, respectively.
\end{assumption}
The above assumptions on the constraint set and the objective function are standard in the literature on stochastic optimization, and in particular in the analysis of conditional gradient algorithms and multi-level optimization; see, for example,~\cite{lan2016conditional},~\cite{yang2019multi-level} and ~\cite{balasubramanian2020stochastic}. We emphasize here that the above smoothness assumptions are made only on the functions $(f_i)_{1\leq i \leq T}$ and not on the stochastic functions $(G_i)_{1\leq i \leq T}$ (which would be a stronger assumption). Moreover, the Lipschitz continuity of $f_i$'s are implied by the Assumption~\ref{aspt:constraint} and assuming the functions are continuously differentiable. However, for sake of illustration, we state both assumptions separately. In addition to these assumptions, we also make unbiasedness and bounded-variance assumptions on the stochastic first-order oracle. Due to its technical nature, we state the precise details later in Section~\ref{sec:rates} (see Assumption~\ref{aspt:oracle-T-level}). 

We next turn our attention to the convergence criterion that we use in this work to evaluate the performance of the proposed algorithm. Gradient-based algorithms iteratively solve sub-problems in the form of
\begin{equation} \label{main_subproblem}
\underset{u \in \setX}{\argmin} \left\{ \langle g,u \rangle + \frac{\beta}{2} \| u -x\|^2 \right\},
\end{equation}
for some $\beta>0$, $g\in\mathbb{R}^d$ and $x \in \setX$. 
Denoting the optimal solution of the above problem by $P_\setX (x,g,\beta)$ and noting its optimality condition, one can easily show that
\[
-\nabla F(\bar{\vx}) \in \mathcal{N}_{\setX}(\bar{\vx})+{\cal B}\Big(0,\|g-\nabla F(\bar{\vx})\|D_{\setX}+\beta\|x-P_\setX (x,g,\beta)\|(D_{\setX}+\|\nabla F(\bar{\vx})\|/\beta)\Big),
\]
where $\mathcal{N}_{\setX}(\bar{\vx})$ is the normal cone to $\setX$ at $\bar{\vx}$ and ${\cal B}(0,r)$ denotes a ball centered at the origin with radius $r$. Thus, reducing the radius of the ball in the above relation will result in finding an approximate first-order stationary point of the problem for non-convex constrained minimization problems.
Motivated by this fact, we can define the gradient mapping at point $\bar x \in \setX$ as 
\begin{equation}\label{eq:definition-gradient-mapping}
    \mathcal{G}_{\setX}(\bar{\vx}, \nabla F(\bar{\vx}), \beta) \coloneqq \beta \left(\bar{\vx} - P_\setX (\bar x,\nabla F(\bar{\vx}),\beta)\right)=
    \beta \left(\bar{\vx} - \proj_{\setX}\left(\bar{\vx}- \frac{1}{\beta}\nabla F(\bar{\vx})\right)\right),
\end{equation}
where $\proj_{\setX}(y)$ denotes the Euclidean projection of the vector $y$ onto the set $\setX$.  The gradient mapping is a classical measure has been widely used in the literature as a convergence criterion when solving nonconvex constrained problems \cite{nesterov2018lectures}. It plays an analogous role of the gradient for constrained optimization problems; in fact when the set $\setX \equiv \mathbb{R}^d$ the gradient mapping just reduces to $\nabla F(\bar{\vx})$. It should be emphasized that while the gradient mapping cannot be computed in the stochastic setting, it still serves as a measure of convergence. Our main goal in this work is to find an $\epsilon$-stationary solution to~\eqref{eq:opt-multi}, in the sense described below. 

\begin{definition}\label{def:gradmapping}
A point $\bar{\vx}\in\setX$ generated by an algorithm for solving~\eqref{eq:opt-multi} is called an $\epsilon$-stationary point, if we have $\E[\| \mathcal{G}_{\setX}(\bar{\vx}, \nabla F(\bar{\vx}), \beta)\|^2]\leq \epsilon$, where the expectation is taken over all the randomness involved in the problem.
\end{definition}
In the literature on conditional gradient methods for the nonconvex setting, the so-called Frank-Wolfe gap is also widely used to provide convergence analysis. The Frank-Wolfe Gap is defined formally as
\begin{equation}\label{eq:definition-fw-gap}
    g_{\setX}(\bar{\vx}, \nabla F(\bar{\vx})) := \underset{\vy\in\setX}{\max}~\langle \nabla F(\bar{\vx}) , \bar{\vx} - \vy  \rangle.
\end{equation}
As pointed out by~\cite{balasubramanian2021zeroth}, the gradient mapping criterion and the Frank-Wolfe gap are related to each other in the following sense.

\begin{proposition}{\normalfont \cite{balasubramanian2021zeroth}}
Let $g_{\setX}(\cdot)$ be the Frank-Wolfe gap defined in \eqref{eq:definition-fw-gap} and $\mathcal{G}_{\setX}(\cdot)$ be the gradient mapping defined in \eqref{eq:definition-gradient-mapping}. Then, we have $\| \mathcal{G}_{\setX}(\vx, \nabla F(\vx), \beta)\|^2 \leq g_{\setX}(\vx, \nabla F(\vx)), \forall \vx\in\setX$.
Moreover, under Assumption \ref{aspt:constraint}, \ref{aspt:lipschitz-T-level}, we have $g_{\setX}(\vx, \nabla F(\vx)) \leq \left[(1/\beta)\prod_{i=1}^T L_{\vf_i}+D_\setX\right]\| \mathcal{G}_{\setX}(\vx, \nabla F(\vx), \beta)\|$.
\end{proposition}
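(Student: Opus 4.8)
The plan is to derive both inequalities from a single source: the first-order optimality condition of the projection that defines the gradient mapping. Write $x^{+} := P_{\setX}(x,\nabla F(x),\beta) = \proj_{\setX}\big(x-\tfrac{1}{\beta}\nabla F(x)\big)$, so that $\mathcal{G} := \mathcal{G}_{\setX}(x,\nabla F(x),\beta) = \beta(x-x^{+})$ by \eqref{eq:definition-gradient-mapping}. First I would record the variational inequality characterizing $x^{+}$ as the minimizer of \eqref{main_subproblem} with $g=\nabla F(x)$: for every $y\in\setX$ we have $\langle x^{+}-x+\tfrac{1}{\beta}\nabla F(x),\,y-x^{+}\rangle\ge 0$, which after multiplying by $\beta>0$ becomes
\begin{equation}\label{eq:proof-plan-vi}
\langle \nabla F(x)-\mathcal{G},\,y-x^{+}\rangle \ge 0 \qquad \text{for all } y\in\setX .
\end{equation}

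For the first inequality I would test \eqref{eq:proof-plan-vi} at $y=x$, which gives $\langle\nabla F(x),x-x^{+}\rangle\ge\langle\mathcal{G},x-x^{+}\rangle=\tfrac{1}{\beta}\|\mathcal{G}\|^{2}$. Since $x^{+}\in\setX$, the definition \eqref{eq:definition-fw-gap} of the Frank--Wolfe gap immediately yields $g_{\setX}(x,\nabla F(x))\ge\langle\nabla F(x),x-x^{+}\rangle$, and chaining the two estimates gives $g_{\setX}(x,\nabla F(x))\ge\tfrac{1}{\beta}\|\mathcal{G}\|^{2}$, hence the claimed bound $\|\mathcal{G}\|^{2}\le g_{\setX}(x,\nabla F(x))$ whenever $\beta\le 1$.

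For the second inequality I would fix an arbitrary $y\in\setX$ and split $\langle\nabla F(x),x-y\rangle=\langle\nabla F(x),x-x^{+}\rangle+\langle\nabla F(x),x^{+}-y\rangle$. The first term is handled by Cauchy--Schwarz, $\langle\nabla F(x),x-x^{+}\rangle\le\tfrac{1}{\beta}\|\nabla F(x)\|\,\|\mathcal{G}\|$; for the second term I would use \eqref{eq:proof-plan-vi} to pass from $\nabla F(x)$ to $\mathcal{G}$ and then apply Cauchy--Schwarz together with the diameter bound of Assumption~\ref{aspt:constraint}, $\langle\nabla F(x),x^{+}-y\rangle\le\langle\mathcal{G},x^{+}-y\rangle\le\|\mathcal{G}\|\,\|x^{+}-y\|\le D_{\setX}\|\mathcal{G}\|$. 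Taking the supremum over $y\in\setX$ then gives $g_{\setX}(x,\nabla F(x))\le\big(\tfrac{1}{\beta}\|\nabla F(x)\|+D_{\setX}\big)\|\mathcal{G}\|$. It remains to bound $\|\nabla F(x)\|$ by $\prod_{i=1}^{T}L_{f_{i}}$: from the chain rule $\nabla F(x)=\nabla f_{T}(y_{T})\cdots\nabla f_{1}(y_{1})$ and the fact that Assumption~\ref{aspt:lipschitz-T-level} forces each Jacobian to have operator norm at most $L_{f_{i}}$, submultiplicativity of the operator norm (and that the Euclidean norm of the vector $\nabla F(x)$ coincides with its operator norm) gives $\|\nabla F(x)\|\le\prod_{i=1}^{T}L_{f_{i}}$, which is the claim. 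None of the steps is genuinely difficult; the only places that need care are getting the direction of \eqref{eq:proof-plan-vi} right, and carrying the operator norm rather than the Frobenius norm through the product of Jacobians so that no dimension-dependent constant enters $\|\nabla F(x)\|$.
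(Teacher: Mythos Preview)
The paper does not supply its own proof of this proposition; it is simply quoted from \cite{balasubramanian2021zeroth}. There is therefore nothing in the present paper to compare your argument against, and your job is to furnish a self-contained justification, which you essentially do.

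Your derivation of the second inequality is correct in every detail: the split $\langle\nabla F(x),x-y\rangle=\langle\nabla F(x),x-x^{+}\rangle+\langle\nabla F(x),x^{+}-y\rangle$, Cauchy--Schwarz on the first piece, the variational inequality on the second, and the chain-rule bound $\|\nabla F(x)\|\le\prod_{i}L_{f_i}$ all go through. Your caution about operator versus Frobenius norms is well placed, since the paper works in Frobenius norm for matrices; but as you note, $\nabla F(x)$ is a vector, so there is no ambiguity in the final bound.

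For the first inequality you obtain $\|\mathcal G\|^{2}\le\beta\,g_{\setX}(x,\nabla F(x))$, which is the sharp conclusion of your optimality-condition argument, and you are right that the statement in the paper, read literally, only follows from this when $\beta\le 1$. This is not a flaw in your reasoning: the factor $\beta$ is genuinely present in the inequality that the projection argument delivers, and the version printed in the paper has evidently dropped it (the cited source carries the factor). So your proof is correct; the discrepancy is with the statement as transcribed here, not with your method.
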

For stochastic conditional gradient-type algorithms, the oracle complexity is measured in terms of number of calls to the Stochastic First-order Oracle (SFO) and the Linear Minimization Oracle (LMO) used to the solve the sub-problems (that are of the form of minimizing a linear function over the convex feasible set) arising in the algorithm. In this work, we hence measure the number of calls to SFO and LMO required by the proposed algorithm to obtain an $\epsilon$-stationary solution in the sense of Definition~\ref{def:gradmapping}. We now highlight our \textbf{main contributions}:

\begin{itemize}[leftmargin=0.2in]
    \item We propose a projection-free conditional gradient-type method (Algorithm~\ref{alg:CG-NASA-T-level}) for solving~\eqref{eq:opt-multi}. In Theorem~\ref{thm:main-T-level}, we show that the SFO and LMO complexities of this algorithm, in order to achieve an $\epsilon$-stationary solution in the sense of Definition~\ref{def:gradmapping}, are of order $\mathcal{O}(\epsilon^{-2})$ and $\mathcal{O}(\epsilon^{-3})$, respectively.
    \item The above SFO and LMO complexities are in particular level-independent (i.e., the dependence of the complexities on the target accuracy is independent of $T$). The proposed algorithm is parameter-free and does not require any mini-batches, making it applicable for the online setting. \item When considering the case of $T \leq 2$, we present a simplified method (Algorithm~\ref{alg:CG-NASA} and~\ref{alg:CG-ASA}) to obtain the same oracle complexities. Intriguingly, while this simplified method is still parameter-free for $T=1$, it is not when $T=2$ (see Theorem~\ref{thm:two-level} and Remark~\ref{rmk:parameterfree}). Furthermore, for the case of $T=1$, we also establish high-probability bounds (see Theorem~\ref{thm:one-level-highprob}).  
\end{itemize}
A summary of oracle complexities for stochastic conditional gradient-type algorithms is in Table~\ref{tab:summary}. 
\begin{table}[t]
\small
\centering
\resizebox{\columnwidth}{!}{%
\begin{tabular}{l c c c c  c}
\toprule
 \textbf{Algorithm}  & \textbf{Criterion} & \textbf{\# of levels} & \textbf{Batch size} & \textbf{SFO} & \textbf{LMO} \\
 \midrule
  SPIFER-SFW \cite{yurtsever2019conditional} & FW-gap & 1 & $\mathcal{O}(\epsilon^{-1})$ & $\mathcal{O}(\epsilon^{-3})$ & $\mathcal{O}(\epsilon^{-2})$\\
 \hline
 1-SFW \cite{zhang2020one} & FW-gap & 1 & 1  & $\mathcal{O}(\epsilon^{-3})$ & $\mathcal{O}(\epsilon^{-3})$\\
  \hline
 SCFW \cite{akhtar2021projection} & FW-gap & 2 & 1  & $\mathcal{O}(\epsilon^{-3})$ & $\mathcal{O}(\epsilon^{-3})$\\
 \hline
 SCGS \cite{qu2018non} & GM & 1 & $\mathcal{O}(\epsilon^{-1})$ &  $\mathcal{O}(\epsilon^{-2})$&  $\mathcal{O}(\epsilon^{-2})$\\
 \hline
  SGD+ICG \cite{balasubramanian2021zeroth} & GM & 1 & $\mathcal{O}(\epsilon^{-1})$ &  $\mathcal{O}(\epsilon^{-2})$&  $\mathcal{O}(\epsilon^{-2})$\\
 \hline
\rowcolor{LightCyan}
  LiNASA+ICG (Algorithm~\ref{alg:CG-NASA-T-level}) & GM & $T$   & 1 &  $\mathcal{O}_T(\epsilon^{-2})$&  $\mathcal{O}_T(\epsilon^{-3})$\\
\bottomrule
\end{tabular}
}
\caption{Complexity results for stochastic conditional gradient type algorithms to find an $\epsilon$-stationary solution in the nonconvex setting. FW-Gap and GM stands for Frank-Wolfe Gap (see~\eqref{eq:definition-fw-gap}) and Gradient Mapping (see~\eqref{eq:definition-gradient-mapping}) respectively. $\mathcal{O}_T$ hides constants in $T$. Existing one-sample based stochastic conditional gradient algorithms are either (i) not applicable to the case of general $T> 1$, or (ii) require strong assumptions~\cite{zhang2020one}, or (iii) are not truly online~\cite{akhtar2021projection}; see Section~\ref{sec:relwork} for detailed discussion. The results in~\cite{balasubramanian2021zeroth} are actually presented for the zeroth-order setting; however the above stated first-order complexities follow immediately.}
\label{tab:summary}
\end{table}

\subsection{Related Work}\label{sec:relwork}

\noindent\textbf{Conditional Gradient-Type Method.} The conditional gradient algorithm \cite{frank1956algorithm, levitin1966constrained}, has had a renewed interest in the machine learning and optimization communities in the past decade; see~\cite{migdalas1994regularization,jaggi2013revisiting,harchaoui2015conditional, lacoste2015global, beck2017linearly, garber2021improved} for a partial list of recent works.  Considering the stochastic convex setup,~\cite{hazan2012projection, hazan2016variance} provided expected oracle complexity results for the stochastic conditional gradient algorithm. The complexities were further improved by a sliding procedure in~\cite{lan2016conditional}, based on Nesterov's acceleration method.~\cite{reddi2016stochastic, yurtsever2019conditional, hazan2016variance} considered variance reduced stochastic conditional gradient algorithms, and provided expected oracle complexities in the non-convex setting. \cite{qu2018non} analyzed the sliding algorithm in the non-convex setting and provided results for the gradient mapping criterion. \emph{All of the above works require increasing orders of mini-batches to obtain their oracle complexity results}.

\cite{mokhtari2020stochastic} and~\cite{zhang2020one} proposed a stochastic conditional gradient-type algorithm with moving-average gradient estimator for the convex and non-convex setting that uses only one-sample in each iteration. However,~\cite{mokhtari2020stochastic} and~\cite{zhang2020one} require several restrictive assumptions, which we explain next (focusing on~\cite{zhang2020one} which considers the nonconvex case). Specifically,~\cite{zhang2020one} requires that the stochastic function $G_1(x,\xi_1)$ has uniformly bounded function value, gradient-norm, and Hessian spectral-norm, and the distribution of the random vector $\xi_1$ has an absolutely continuous density $p$ such that the norm of the gradient of $\log p$ and spectral norm of the Hessian of $\log p$ has finite fourth and second-moments respectively. In contrasts, we do not require such stringent assumptions. Next, all of the above works focus only on the case of $T=1$. ~\cite{akhtar2021projection} considered stochastic conditional gradient algorithm for solving~\eqref{eq:opt-multi}, with $T=2$. However,~\cite{akhtar2021projection} also makes stringent assumptions: (i) the stochastic objective functions $G_1(x,\xi_1)$  and $G_2(x,\xi_1)$ themselves have Lipschitz gradients almost surely and (ii) for a given instance of random vectors $\xi_1$ and $\xi_2$, one could query the oracle at the current and previous iterations, which makes the algorithm not to be truly online. See Table~\ref{tab:summary} for a summary. \\

\noindent\textbf{Stochastic Multi-level Composition Optimization.} Compositional optimization problems of the form in~\eqref{eq:opt-multi} have been considered as early as 1970s by~\cite{Ermolievold}. Recently, there has been a renewed interest on this problem.~\cite{ermoliev2013sample} and~\cite{dentcheva2017statistical} considered a sample-average approximation approach for solving~\eqref{eq:opt-multi} and established several asymptotic results. For the case of $T=2$,~\cite{wang2017stochastic}, \cite{wang2016accelerating} and~\cite{blanchet2017unbiased} proposed and analyzed stochastic gradient descent-type algorithms in the smooth setting.~\cite{davis2019stochastic} and \cite{duchi2018stochastic} considered the non-smooth setting and established oracle complexity results. Furthermore,~\cite{hu2020biased} proposed algorithms when the randomness between the two levels are not necessarily independent. For the general case of $T\geq 1$, \cite{yang2019multi-level} proposed stochastic gradient descent-type algorithms and established oracle complexities established that depend exponentially on $T$ and are hence sub-optimal. Indeed, large deviation and Central Limit Theorem results established in~\cite{ermoliev2013sample, dentcheva2017statistical}, respectively, show that in the sample-average approximation setting, the $\argmin$ of the problem in~\eqref{eq:opt-multi} based on $n$ samples, converges at a level-independent rate (i.e., dependence of the convergence rate on the target accuracy is independent of $T$) to the true minimizer, under suitable regularity conditions. 

\cite{ghadimi2020single} proposed a single time-scale Nested Averaged Stochastic Approximation (NASA) algorithm and established optimal rates for the cases of $T=1,2$. For the general case of $T\geq 1$,~\cite{balasubramanian2020stochastic} proposed a linearized NASA algorithm and established level-independent and optimal convergence rates. Concurrently,~\cite{ruszczynski2021stochastic} considered the case when the function $f_i$ are non-smooth and established asymptotic convergence results. \cite{zhang2019multi} also established non-asymptotic level-independent oracle complexities, however, under stronger assumptions than that in~\cite{balasubramanian2020stochastic}. Firstly, they assumed that for a fixed batch of samples, one could query the oracle on different points, which is not suited for the general online stochastic optimization setup. Next, they assume a much stronger mean-square Lipschitz smoothness assumption on the individual functions $f_i$ and their gradients. Finally, they required mini-batches sizes that depend exponentially on $T$, which makes their method impractical. Concurrent to~\cite{balasubramanian2020stochastic}, level-independent rates were also obtained for \emph{unconstrained} problems by~\cite{chen2021solving}, albeit, under the stronger assumption that the stochastic functions $G_i(x,\xi_i)$ are Lipschitz, almost surely. It is also worth mentioning that while some of the above papers considered constrained problems, the algorithms proposed and analyzed in the above works are not projection-free, which is the main focus of this work. \vspace{-0.05in}



\section{Methodology}\label{sec:method}

In this section, we present our projection-free algorithm for solving problem \eqref{eq:opt-multi}. The method generates three random sequences, namely, approximate solutions $\{\vx^k\}$, average gradients $\{\vz^k\}$, and average function values $\{\vu^k\}$, defined on a certain probability space $(\Omega,\setF,P)$. We let $\setF_k$ to be the $\sigma$-algebra generated by $\{\vx^0, \dots, \vx^k, \vz^0, \dots, \vz^k, \vu_1^0, \dots, \vu_1^k, \dots ,\vu_T^0, \dots, \vu_T^k\}$. The overall method is given in Algorithm \ref{alg:CG-NASA-T-level}. In~\eqref{eq:update-z-T-level}, the stochastic  Jacobians $\vJ_i^{k+1}\in \reals^{d_{i}\times d_{i-1}}$, and the product $\prod_{i=1}^{T} \vJ_{T+1-i}^{k+1}$ is calculated as $\vJ_{T}^{k+1}\vJ_{T-1}^{k+1}\cdots\vJ_{1}^{k+1}\in \reals^{d_T\times d_1} \equiv\reals^{d_T\times 1}$. In~\eqref{eq:update-u-T-level}, we use the notation $\langle\cdot, \cdot \rangle$ to represent both matrix-vector multiplication and vector-vector inner product. There are two aspects of the algorithm that we highlight specifically: (\texttt{i})  In
addition to estimating the gradient of $F$, we also estimate a  stochastic linear approximation of the inner functions $f_i$ by a moving-average technique. In the multi-level setting we consider, it helps us to avoid the accumulation of bias, when estimating the $f_i$ directly. Linearization techniques were used in the stochastic optimization since the work of~\cite{ruszczynski1987linearization}. A similar approach was used in~\cite{balasubramanian2020stochastic} in the context of projected-based methods for solving~\eqref{eq:opt-multi}. It is also worth mentioning that other linearization techniques have been used in~\cite{davis2019stochastic} and~\cite{duchi2018stochastic} for estimating the stochastic inner function values for weakly convex two-level composition problems, and (\texttt{ii}) The \texttt{ICG} method given in Algorithm \ref{alg:ICG} is essentially applying \emph{deterministic} conditional gradient method with the exact line search for solving the quadratic minimization subproblem in \eqref{main_subproblem} with the estimated gradient $z_k$ in~\eqref{eq:update-z-T-level}. It was also used in~\cite{balasubramanian2021zeroth} as a sub-routine and is motivated by the sliding approach of~\cite{lan2016conditional}.

\begin{algorithm}[t]
\caption{Linearized NASA with Inexact Conditional Gradient Method (\texttt{LiNASA+ICG})}
\begin{algorithmic}
    	\STATE \textbf{Input:} $\vx^0\in \setX$, $\vz^0=0\in\reals^d$, $\vu_i^0\in\reals^{d_i} , i=1,\dots, T$, $\beta_k>0$, $t_k>0$, $\tau_k \in (0, 1]$, $\delta \geq 0$.
        \FOR{$k =  0, 1 , 2, \dots, N$}
        \STATE 1. Update the solution:
        \begin{align}\label{eq:subproblem-x}
        \tilde{\vy}^{k} &= \texttt{ICG}(\vx^k, \vz^k, \beta_k, t_k, \delta),\\
        \vx^{k+1} &= \vx^k + \tau_k (\tilde{\vy}^{k}-\vx^k),\label{eq:update-x-T-level}
        \end{align}
        and compute stochastic Jacobians $\vJ_i^{k+1}$, and function values $\vG_{i}^{k+1}$ at $\vu_{i+1}^k$ for $i=1,\dots, T$.
        \STATE 2. Update average gradients $\vz$ and function value estimates $\vu_{i}$ for each level $i=1,\dots, T$
        \begin{align}
        \vz^{k+1} &= (1- \tau_k)\vz^k + \tau_k \prod_{i=1}^{T} \vJ_{T+1-i}^{k+1},\label{eq:update-z-T-level}\\
        \vu_{i}^{k+1} &= (1-\tau_k)\vu_{i}^k + \tau_k \vG_{i}^{k+1} + \langle \vJ_{i}^{k+1}, \vu_{i+1}^{k+1} - \vu_{i+1}^k\rangle.\label{eq:update-u-T-level}
        \end{align}
        \ENDFOR
        \STATE \textbf{Output:} $(\vx^{R}, \vz^R, \vu_1^R, \cdots, \vu_T^R)$, where $R$ is uniformly distributed over $\{1,2,\dots, N\}$
\end{algorithmic}
\label{alg:CG-NASA-T-level}
\end{algorithm}

\begin{algorithm}[t]
\caption{Inexact Conditional Gradient Method (\texttt{ICG})}
\begin{algorithmic}
	  \STATE \textbf{Input:} ($\vx, \vz, \beta, M, \delta)$\\
	  \textbf{Set} $w^0=\vx$.
      \FOR{$t =  0, 1 , 2, \dots, M$} 
         \STATE 1. Find $\vv^t\in\setX$ with a quantity $\delta \geq 0$ such that
          \begin{equation*}
              \langle \vz+\beta(w^t- \vx), \vv^t\rangle \leq \min_{\vv\in\setX} ~ \langle \vz+\beta(w^t- \vx), \vv\rangle + \frac{ \beta D_\setX^2 \delta}{t+2}. 
          \end{equation*}
          \STATE 2. Set $w^{t+1} = (1-\mu_t) w^t + \mu_t \vv^t$ with $\mu_t = \min\bigg\{ 1, \frac{\langle\beta(\vx-w^t) -\vz, \vv^t-w^t \rangle}{\beta\|\vv^t - w^t\|^2} \bigg\}.$
      \ENDFOR
      \STATE \textbf{Output:} $w^{M}$
\end{algorithmic}
\label{alg:ICG}
\end{algorithm}

\section{Main Results}\label{sec:rates}
In this section, we present our main result on the oracle complexity of Algorithm~\ref{alg:CG-NASA-T-level}. Before we proceed, we present our assumptions on the stochastic first-order oracle.

\begin{assumption}[Stochastic First-Order Oracle]\label{aspt:oracle-T-level}
Denote $\vu_{T+1}^k \equiv \vx^k$. For each $k$, $\vu_{i+1}^k$ being the input, the stochastic oracle outputs $\vG_i^{k+1}\in\reals^{d_i}$ and $\vJ_{i}^{k+1}$ such that given $\setF_k$ and for any $i\in\{1,\dots,T\}$
\begin{itemize}[leftmargin=2em]
    \item[(a)] $\E[\vJ_{i}^{k+1}|\setF_k] = \nabla \vf_i(\vu _{i+1}^k),~\E[\vG_{i}^{k+1}|\setF_k] = \vf_i(\vu_{i+1}^k)$,
    \item[(b)] $\E[\|\vG_i^{k+1} - \vf_i(\vu_{i+1}^k) \|^2|\setF_k]\leq \sigma_{\vG_i}^2, ~\E[\|\vJ_i^{k+1} - \nabla \vf_i(\vu_{i+1}^k) \|^2|\setF_k]\leq \sigma_{\vJ_i}^2$,
    \item[(c)] The outputs of the stochastic oracle at level $i$, $\vG_i^{k+1}$ and $\vJ_{i}^{k+1}$, are independent. The outputs of the stochastic oracle are independent between levels, i.e., $\{\vG_{i}^{k+1}\}_{i=1,\dots,T}$ are independent and so are $\{\vJ_{i}^{k+1}\}_{i=1,\dots,T}$.
\end{itemize}
\end{assumption}
Parts (a) and (b) in Assumption~\ref{aspt:oracle-T-level} are standard unbiasedness and bounded variance assumptions on the stochastic gradient, common in the literature. Part (c) is essential to establish the convergence results in the multi-level case. Similar assumptions have been made, for example, in~\cite{yang2019multi-level} and~\cite{balasubramanian2020stochastic}. We also emphasize that unlike some prior works (see e.g.,~\cite{zhang2020one}), Assumption~\ref{aspt:oracle-T-level} allows the case of endogenous uncertainty, and we do not require the distribution of the random variables $(\xi_i)_{1\leq i \leq T}$ to be independent of the distribution of the decision variables $(u_i)_{1\leq i \leq T}$.

\begin{remark} Under Assumption \ref{aspt:lipschitz-T-level}, and \ref{aspt:oracle-T-level}, we can immediately conclude that $\E[\| \vJ_{i}^{k+1} \|^2|\setF_k] 
    = \E[\| \vJ_{i}^{k+1}  - \nabla \vf_{i}(\vu_{i+1}^k)\|^2|\setF_k] + \| \nabla \vf_{i}(\vu_{i+1}^k) \|^2\leq \sigma_{\vJ_i}^2 + L_{\vf_i}^2 := \hat{\sigma}_{\vJ_i}^2$.
In the sequel, $\hat{\sigma}_{\vJ_i}^2$ will be used to simplify the presentation. 
\end{remark}

We start with the merit function used in this work and its connection to the gradient mapping criterion. Our proof leverages the following merit function:
\begin{equation}\label{eq:definition-merit-fucntion-T-level}
    W_{\alpha,\gamma}(\vx, \vz, \vu) = F(\vx) -F^\star - \eta (\vx, \vz) + \alpha\| \nabla F(\vx) - \vz \|^2 + \sum_{i=1}^{T}\gamma_i \| \vf_i(\vu_{i+1}) - \vu_{i}\|^2,
\end{equation}
where $\alpha, \{\gamma_i\}_{1\leq i\leq T}$ are positive constants and 
\begin{equation}\label{eq:definition-eta}
    \eta(\vx, \vz) =\min _{\vy \in \setX}\left\{  H(\vy; \vx, \vz, \beta):=  \langle \vz, \vy-\vx\rangle+\frac{\beta}{2}\|\vy-\vx\|^{2}\right\}.
\end{equation}

Compared to \cite{balasubramanian2020stochastic}, we require the additional term  $\| \nabla F(\vx) - \vz \|^2$, which turns out to be essential in our proof due to the \texttt{ICG} routine. The following proposition relates the merit function above to the gradient mapping.
\begin{proposition}\label{prop:merit-function-grad-mapping}
Let $\mathcal{G}_{\setX}(\cdot)$ be the gradient mapping defined in \eqref{eq:definition-gradient-mapping} and $\eta(\cdot,\cdot)$ be defined in \eqref{eq:definition-eta}. For any pair of $(\vx,\vz)$ and $\beta>0$, we have $\|\mathcal{G}_{\setX}(\vx, \nabla F(\vx), \beta)\|^2 \leq -4\beta  \eta(\vx, \vz) + 2\|\nabla F(\vx)-\vz\|^2$.
\end{proposition}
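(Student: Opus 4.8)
The plan is to relate the quantity $\eta(\vx,\vz)$, which is the optimal value of a quadratic minimization over $\setX$ using the \emph{estimated} gradient $\vz$, to the gradient mapping $\mathcal{G}_\setX(\vx,\nabla F(\vx),\beta)$, which is built from the \emph{true} gradient $\nabla F(\vx)$. The natural bridge is to introduce the auxiliary quantity $\eta(\vx,\nabla F(\vx))$, i.e.\ the same minimization but with $\nabla F(\vx)$ in place of $\vz$, and to handle the discrepancy $\|\nabla F(\vx)-\vz\|$ separately. First I would recall the standard identity for the proximal-type subproblem in~\eqref{eq:definition-eta}: if $\vx^+ := P_\setX(\vx,\nabla F(\vx),\beta) = \proj_\setX(\vx - \tfrac1\beta \nabla F(\vx))$, then $H(\vx^+;\vx,\nabla F(\vx),\beta) = \langle \nabla F(\vx),\vx^+-\vx\rangle + \tfrac\beta2\|\vx^+-\vx\|^2 \le -\tfrac\beta2\|\vx^+-\vx\|^2$, where the inequality uses the optimality/variational inequality for the projection (the cross term $\langle \nabla F(\vx) + \beta(\vx^+-\vx),\vx^+-\vx\rangle \le 0$, equivalently $\langle \vx - \tfrac1\beta\nabla F(\vx) - \vx^+,\, \vy - \vx^+\rangle \le 0$ with $\vy = \vx$). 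Since $\|\mathcal{G}_\setX(\vx,\nabla F(\vx),\beta)\|^2 = \beta^2\|\vx-\vx^+\|^2$, this gives $\eta(\vx,\nabla F(\vx)) \le H(\vx^+;\vx,\nabla F(\vx),\beta) \le -\tfrac{1}{2\beta}\|\mathcal{G}_\setX(\vx,\nabla F(\vx),\beta)\|^2$, i.e. $\|\mathcal{G}_\setX(\vx,\nabla F(\vx),\beta)\|^2 \le -2\beta\,\eta(\vx,\nabla F(\vx))$.

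Next I would control $\eta(\vx,\nabla F(\vx)) - \eta(\vx,\vz)$. Both are pointwise minima over $\vy \in \setX$ of functions that differ only in their linear term: $H(\vy;\vx,\nabla F(\vx),\beta) - H(\vy;\vx,\vz,\beta) = \langle \nabla F(\vx)-\vz,\,\vy-\vx\rangle$. Evaluating $\eta(\vx,\nabla F(\vx))$ at the minimizer $\vy^\star$ of $H(\cdot;\vx,\vz,\beta)$ and using $\eta(\vx,\vz) = H(\vy^\star;\vx,\vz,\beta)$, one gets $\eta(\vx,\nabla F(\vx)) \le \eta(\vx,\vz) + \langle \nabla F(\vx)-\vz, \vy^\star - \vx\rangle$. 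To bound the cross term without a diameter constant appearing in a way that breaks the clean inequality, I would use Young's inequality in the form $\langle \nabla F(\vx)-\vz,\vy^\star-\vx\rangle \le \tfrac1{2\beta}\|\nabla F(\vx)-\vz\|^2 + \tfrac\beta2\|\vy^\star-\vx\|^2$, and then absorb the $\tfrac\beta2\|\vy^\star-\vx\|^2$ term using the fact that $\vy^\star$ minimizes $H(\cdot;\vx,\vz,\beta)$: from optimality one has $\eta(\vx,\vz) = H(\vy^\star;\vx,\vz,\beta) \le H(\vx;\vx,\vz,\beta) = 0$ and, more sharply, $\tfrac\beta2\|\vy^\star-\vx\|^2 \le -\eta(\vx,\vz) + \langle \vz, \vx - \vy^\star\rangle$ — actually the cleanest route is the variational inequality $\langle \vz + \beta(\vy^\star-\vx), \vy - \vy^\star\rangle \ge 0$ for all $\vy\in\setX$; taking $\vy=\vx$ yields $\beta\|\vy^\star-\vx\|^2 \le \langle \vz, \vx-\vy^\star\rangle$, and combined with the definition of $\eta$ this gives $\eta(\vx,\vz) \le -\tfrac\beta2\|\vy^\star-\vx\|^2$, hence $\tfrac\beta2\|\vy^\star - \vx\|^2 \le -\eta(\vx,\vz)$. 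Substituting back: $\eta(\vx,\nabla F(\vx)) \le \eta(\vx,\vz) + \tfrac1{2\beta}\|\nabla F(\vx)-\vz\|^2 - \eta(\vx,\vz) = \tfrac1{2\beta}\|\nabla F(\vx)-\vz\|^2$. Hmm — that loses the $\eta(\vx,\vz)$ I want to keep; I would instead split Young's inequality with a different weight, e.g.\ put only a fraction of the quadratic into the absorbed term, or more simply keep $\eta(\vx,\nabla F(\vx)) \le \eta(\vx,\vz) + \langle\nabla F(\vx)-\vz,\vy^\star-\vx\rangle$ and bound $\langle\nabla F(\vx)-\vz,\vy^\star-\vx\rangle \le \tfrac1\beta\|\nabla F(\vx)-\vz\|^2 + \tfrac\beta4\|\vy^\star-\vx\|^2 \le \tfrac1\beta\|\nabla F(\vx)-\vz\|^2 - \tfrac12\eta(\vx,\vz)$, giving $\eta(\vx,\nabla F(\vx)) \le \tfrac12\eta(\vx,\vz) + \tfrac1\beta\|\nabla F(\vx)-\vz\|^2$.

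Combining the two displays: $\|\mathcal{G}_\setX(\vx,\nabla F(\vx),\beta)\|^2 \le -2\beta\,\eta(\vx,\nabla F(\vx)) \le -\beta\,\eta(\vx,\vz) + 2\|\nabla F(\vx)-\vz\|^2$. This is slightly sharper than the claimed bound; to land exactly on the stated constants $-4\beta\,\eta(\vx,\vz) + 2\|\nabla F(\vx)-\vz\|^2$ I would simply be less aggressive in the Young's-inequality split (or note that $\eta(\vx,\vz)\le 0$ so $-\beta\eta \le -4\beta\eta$ is false in sign — rather, one keeps more slack: use $\langle\nabla F(\vx)-\vz,\vy^\star-\vx\rangle \le \|\nabla F(\vx)-\vz\|^2 + \tfrac14\|\vy^\star-\vx\|^2$-type bounds tuned so the $\eta(\vx,\vz)$ coefficient comes out as $2$ before the final doubling). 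I expect the main obstacle to be exactly this bookkeeping: tracking which optimality/variational inequality to invoke (for the projection point versus for $\vy^\star$) and choosing the Young's-inequality weights so that the residual quadratic term $\|\vy^\star-\vx\|^2$ is fully absorbed into $-\eta(\vx,\vz)$ with the right constant, rather than leaving a stray diameter-dependent term. Everything else — the projection variational inequality and the elementary identity $\|\mathcal{G}_\setX\|^2 = \beta^2\|\vx-\vx^+\|^2$ — is routine.
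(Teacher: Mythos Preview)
Your chain breaks at the step ``Combining the two displays.'' You proved an \emph{upper} bound
\[
\eta(\vx,\nabla F(\vx)) \le \tfrac12\eta(\vx,\vz) + \tfrac{1}{\beta}\|\nabla F(\vx)-\vz\|^2,
\]
and then multiplied by $-2\beta<0$ while keeping the inequality direction. That is a sign error: multiplying by a negative flips the inequality, so what you actually get is the lower bound $-2\beta\,\eta(\vx,\nabla F(\vx)) \ge -\beta\,\eta(\vx,\vz) - 2\|\nabla F(\vx)-\vz\|^2$, which is useless for upper-bounding $\|\mathcal{G}_\setX\|^2$. To make the intermediary $\eta(\vx,\nabla F(\vx))$ work you would need a \emph{lower} bound on it in terms of $\eta(\vx,\vz)$, which means evaluating $\eta(\vx,\vz)$ at $\vx^+ = P_\setX(\vx,\nabla F(\vx),\beta)$ rather than evaluating $\eta(\vx,\nabla F(\vx))$ at $\vy^\star$. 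Even after that correction, however, the bookkeeping with Young's inequality does not land on the stated constants $(-4\beta,\,2)$: you end up with $-4\beta\,\eta(\vx,\vz) + 4\|\nabla F(\vx)-\vz\|^2$ at best, so this is not merely ``less aggressive splitting.''

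The paper bypasses $\eta(\vx,\nabla F(\vx))$ entirely. It works directly on the gradient mapping: write $p:=\proj_\setX(\vx-\tfrac1\beta\vz)$ and use $\|a-b\|^2\le 2\|a-c\|^2+2\|c-b\|^2$ with $c=p$ to get
\[
\|\mathcal{G}_\setX(\vx,\nabla F(\vx),\beta)\|^2 \le 2\beta^2\|\vx-p\|^2 + 2\beta^2\Big\|\proj_\setX\big(\vx-\tfrac1\beta\nabla F(\vx)\big)-p\Big\|^2.
\]
Nonexpansiveness of $\proj_\setX$ bounds the second term by $2\|\nabla F(\vx)-\vz\|^2$, and the projection variational inequality at $p$ (the very identity you already derived) gives $\eta(\vx,\vz)\le -\tfrac{\beta}{2}\|p-\vx\|^2$, i.e.\ $2\beta^2\|\vx-p\|^2\le -4\beta\,\eta(\vx,\vz)$. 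This two-line route delivers the exact constants in the statement with no Young's-inequality tuning.
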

\begin{proof}
By expanding the square, and using the properties of projection operation, we have $$\|\proj_{\setX}({\vx} - \frac{1}{\beta}{\vz})  - {\vx}\|^2 + \|\proj_{\setX}({\vx} - \frac{1}{\beta}{\vz}) - (\vx-\frac{1}{\beta}{\vz}) \|^2 \leq \| \bar{\vx} - ({\vx}-\frac{1}{\beta}{\vz})\|^2 = \|\frac{1}{\beta}{\vz}\|^2.$$ 
Thus, we have $\eta({\vx}, {\vz}) \leq -\frac{\beta}{2}\|\proj_{\setX}({\vx}-\frac{1}{\beta}{\vz}) -{\vx}\|^2$. The proof is completed immediately by noting that $ \| \mathcal{G}({\vx}, \nabla F({\vx}), \beta) \|^2 \leq  2\beta^2 \| \proj_{\setX}({\vx}-\frac{1}{\beta}{\vz}) - {\vx}\|^2 +2\left\|\nabla F({\vx}) -{\vz}\right\|^2.$
\end{proof}

We now present out main result on the oracle complexity of Algorithm\ref{alg:CG-NASA-T-level}. 
\begin{theorem}\label{thm:main-T-level}
Under Assumption \ref{aspt:constraint}, \ref{aspt:lipschitz-T-level}, \ref{aspt:oracle-T-level}, let $\{\vx^k, \vz^k, \{\vu_i^k\}_{1\leq i\leq T}\}_{k\geq 0}$ be the sequence generated by Algorithm \ref{alg:CG-NASA-T-level} with $N\geq 1$ and
\begin{equation}\label{thm:parameter-value-T-level}
\begin{split}
    \beta_k \equiv \beta >0, \qquad \tau_0 = 1,~ t_0 = 0,\quad \tau_k = \frac{1}{\sqrt{N}},~t_k = \lceil\sqrt{k}\rceil, \quad \forall k\geq1, 
\end{split}
\end{equation}
where $\beta$ is an arbitrary positive constant. Provided that the merit function $W_{\alpha,\gamma}(\vx,\vz, \vu)$ is defined as \eqref{eq:definition-merit-fucntion-T-level} with
\begin{equation}\label{eq:alpha-gamma-values}
    \alpha = \frac{\beta}{20 L_{\nabla F}^2},\quad \gamma_1 = \frac{\beta}{2} ,\quad \gamma_j =  \left(2\alpha + \frac{1}{4\alpha L_{\nabla F}^2}\right) (T-1)C_j^2 + \frac{\beta}{2},  \quad 2\leq j\leq T,
\end{equation}
we have,
\begin{equation}\label{eq:thm-gradient-mapping}
    \E \left[ \| \mathcal{G}_\setX(\vx^R, \nabla F(\vx^R), \beta) \|^2\right] \leq \frac{2 (\beta + \frac{20L_{\nabla F}^2}{\beta})\left[2W_{\alpha,\gamma}(\vx^0, \vz^0, \vu^0) + \mathcal{B}(\beta, \sigma^2, L, D_{\setX}, T, \delta)\right]}{\sqrt{N}},
\end{equation}
\begin{equation}\label{eq:thm-function-value-error}
    \E \left[ \| \vf_i(\vu_{i+1}^R) - \vu_{i}^R \|^2 \right]\leq \frac{2W_{\alpha,\gamma}(\vx^0, \vz^0, \vu^0) + \mathcal{B}(\beta, \sigma^2, L, D_{\setX}, T, \delta)}{\beta\sqrt{N}}, \quad 1\leq i\leq T.
\end{equation}
where $\vu_{T+1} = \vx, \mathcal{B}(\beta, \sigma^2, L, D_{\setX}, T, \delta) = 4\hat{\sigma}^2 + 32\beta D_{\setX}^2(1+\delta) \left(\frac{3}{5} + \frac{5L_{\nabla F}^2}{\beta^2}\right)$, and $\hat{\sigma}^2$ is a constant depending on the parameters $(\beta, \sigma^2, L, D_{\setX}, T)$ given in \eqref{eq:definition-sigma-hat}.
The expectation is taken with respect to all random sequences generated by the method and an independent random integer number $R$ uniformly distributed over $\{1,\dots,N\}$. That is to say, the number of calls to SFO and LMO to get an $\epsilon$-stationary point is upper bounded by $\mathcal{O}_{T}(\epsilon^{-2}), \mathcal{O}_{T}(\epsilon^{-3})$ respectively.
\end{theorem}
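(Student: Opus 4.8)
The plan is to track the one-step evolution of the merit function $W_{\alpha,\gamma}(\vx^k,\vz^k,\vu^k)$ along the iterates and show it decreases in expectation, up to controlled error terms, at rate $\tau_k$. I would organize the argument into the following blocks. First, I would control the progress of the outer iterate: using the \texttt{ICG} guarantee (Algorithm~\ref{alg:ICG}) together with the smoothness of $F$ (Assumption~\ref{aspt:lipschitz-T-level}, giving $L_{\nabla F}$ in terms of the $L_{\vf_i}, L_{\nabla \vf_i}, D_\setX$), I would bound $F(\vx^{k+1}) - F(\vx^k)$ above by a term involving $\eta(\vx^k,\vz^k)$, the inexactness $\delta$, the stepsize $\tau_k$, and the gradient-estimation error $\|\nabla F(\vx^k) - \vz^k\|^2$. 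The key is that \texttt{ICG} run for $t_k = \lceil\sqrt k\rceil$ steps solves the subproblem~\eqref{main_subproblem} to accuracy $O(\beta D_\setX^2(1+\delta)/t_k)$, so this inexactness is summable against $\tau_k = 1/\sqrt N$ when $\sum_k 1/\sqrt k = O(\sqrt N)$ — this is exactly why the LMO count is $O_T(\epsilon^{-3})$ while the SFO count (one sample per outer iteration, $N = O_T(\epsilon^{-2})$ outer iterations) is $O_T(\epsilon^{-2})$.

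Second, I would handle the recursion for the change in $\eta(\vx,\vz)$. Since $\eta$ is itself the value of a parametric minimization, I would use a standard envelope/Lipschitz argument: $\eta(\vx^{k+1},\vz^{k+1}) - \eta(\vx^k,\vz^k)$ splits into a part from moving $\vx$ (controlled by $\tau_k$ and $D_\setX$) and a part from moving $\vz$, the latter controlled via $\|\vz^{k+1} - \vz^k\|$ and the diameter. Third — and this is the technical heart — I would derive the contraction inequalities for the two squared-error terms $\|\nabla F(\vx^k) - \vz^k\|^2$ and $\|\vf_i(\vu_{i+1}^k) - \vu_i^k\|^2$. Because $\vz^{k+1}$ and $\vu_i^{k+1}$ are $\tau_k$-moving averages of conditionally unbiased estimators (Assumption~\ref{aspt:oracle-T-level}(a),(c)), expanding the square and taking conditional expectation gives a $(1-\tau_k)$-contraction plus a $\tau_k^2\sigma^2$ variance term plus a "drift" term of the form $\tau_k^{-1}\|\nabla F(\vx^{k+1}) - \nabla F(\vx^k)\|^2$ (resp. the analogous function-value drift), which is $O(\tau_k D_\setX^2 L_{\nabla F}^2)$ after using $\|\vx^{k+1}-\vx^k\| \le \tau_k D_\setX$. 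For the multi-level function-value errors one must be careful: the update~\eqref{eq:update-u-T-level} has the correction term $\langle \vJ_i^{k+1}, \vu_{i+1}^{k+1}-\vu_{i+1}^k\rangle$, so the error at level $i$ couples to the error at level $i+1$; this is where the independence-across-levels Assumption~\ref{aspt:oracle-T-level}(c) and the telescoping choice of $\gamma_j$ in~\eqref{eq:alpha-gamma-values} (each $\gamma_j$ absorbing a $(T-1)C_j^2$ contribution from the level above) are used to close the recursion.

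Fourth, I would combine: choosing $\alpha,\gamma$ as in~\eqref{eq:alpha-gamma-values} makes the coefficients line up so that, after summing the per-step bound over $k=1,\dots,N$ and telescoping $W$, all the "bad" terms with a $+$ sign are either (i) absorbed into the negative $-c\,\tau_k\beta\bigl(-\eta(\vx^k,\vz^k)\bigr)$ and $-c\,\tau_k\beta\sum_i\|\vf_i(\vu_{i+1}^k)-\vu_i^k\|^2$ terms, or (ii) summable (the $\tau_k^2$ variance terms sum to $\sum_k N^{-1}\sigma^2 = O(\sigma^2)$, the \texttt{ICG} inexactness terms sum to $O(\beta D_\setX^2(1+\delta)\sqrt N)\cdot\tau_k = O(\beta D_\setX^2(1+\delta))$ since $\tau_k=1/\sqrt N$ — wait, more carefully, $\sum_k \tau_k/t_k$-type terms). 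Using $W \ge 0$ (which holds because $F \ge F^\star$ and, by Proposition~\ref{prop:merit-function-grad-mapping}, $-\eta \ge 0$ up to the gradient-error term already present in $W$) and dividing by $N\tau = \sqrt N$ yields $\frac1N\sum_k \bigl(-\eta(\vx^k,\vz^k) + \sum_i\|\vf_i(\vu_{i+1}^k)-\vu_i^k\|^2\bigr) = O(1/\sqrt N)$. Finally, invoking Proposition~\ref{prop:merit-function-grad-mapping} to pass from $-\eta$ and the gradient error back to $\|\mathcal{G}_\setX\|^2$, and using that $R$ is uniform over $\{1,\dots,N\}$, gives~\eqref{eq:thm-gradient-mapping} and~\eqref{eq:thm-function-value-error}; reading off $N = O_T(\epsilon^{-2})$ for the SFO bound and $\sum_{k\le N} t_k = O(N^{3/2}) = O_T(\epsilon^{-3})$ for the LMO bound completes the proof.

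The main obstacle I anticipate is step three — getting the multi-level function-value error recursion to close with the right constants. Each application of the moving-average expansion introduces a cross term between consecutive levels and a drift term $\|\vu_{i+1}^{k+1}-\vu_{i+1}^k\|^2$ that itself must be bounded recursively down the chain; naively this produces a constant blowing up like $2^T$ or worse, and the whole point of the paper is level-independence. Making the Young's-inequality split parameters and the weights $\gamma_j$ consistent so the final constant $\mathcal B(\beta,\sigma^2,L,D_\setX,T,\delta)$ depends on $T$ only through $\hat\sigma^2$ (and not through the $\epsilon$-rate) is the delicate bookkeeping that the specific form of~\eqref{eq:alpha-gamma-values} and the definition~\eqref{eq:definition-sigma-hat} of $\hat\sigma^2$ are engineered to handle.
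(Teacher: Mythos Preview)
Your proposal is correct and follows essentially the same approach as the paper: a merit-function descent argument combining smoothness of $F$, a second-order expansion of $\eta$ (the paper uses that $\nabla\eta$ is Lipschitz, not just $\eta$), moving-average contraction for $\|\nabla F-\vz\|^2$ and the level-wise errors $\|\vf_i(\vu_{i+1})-\vu_i\|^2$, and the \texttt{ICG} accuracy bound, with the $\gamma_j$ in~\eqref{eq:alpha-gamma-values} chosen to absorb the cross-level couplings. One cosmetic difference: the paper tracks $\|\vd^k\|^2 = \|\vy^k-\vx^k\|^2$ (exact subproblem minimizer) on the left-hand side and closes via $\|\mathcal{G}_\setX\|^2 \le 2\beta^2\|\vd^k\|^2 + 2\|\nabla F(\vx^k)-\vz^k\|^2$ from non-expansiveness of projection, rather than tracking $-\eta$ and invoking Proposition~\ref{prop:merit-function-grad-mapping}, but the two are equivalent.
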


\begin{remark}
The constant $\mathcal{B}(\beta, \sigma^2, L, D_\setX, T, \delta)$ is $\mathcal{O}(T)$ given the definition of $\hat{\sigma}^2$ and the value of $\gamma_j$ in \eqref{eq:alpha-gamma-values}, which further implies that the total number of calls to SFO and LMO of Algorithm \ref{alg:CG-NASA-T-level} for finding an $\epsilon$-stationary point of \eqref{eq:opt-multi}, are bounded by $\mathcal{O}(T^2\epsilon^{-2}) = \mathcal{O}_T(\epsilon^{-2})$ and $\mathcal{O}(T^3\epsilon^{-3}) = \mathcal{O}_T(\epsilon^{-3})$ respectively. Furthermore, it is worth noting that this complexity bound for Algorithm \ref{alg:CG-NASA-T-level} is obtained without any dependence of the parameter $\beta_k$ on Lipschitz constants due to the choice of arbitrary positive constant $\beta$ in \eqref{thm:parameter-value-T-level}, and $\tau_k, t_k$ depend only on the number of iterations $N$ and $k$ respectively. This makes Algorithm \ref{alg:CG-NASA-T-level} parameter-free and easy to implement.
\end{remark}

\begin{remark}
As discussed in Section~\ref{sec:method}, the \texttt{ICG} routine given in Algorithm \ref{alg:ICG} is a \emph{deterministic} method with the estimated gradient $z_k$ in~\eqref{eq:update-z-T-level}. The number of iterations, $t_k$, required to run Algorithm \ref{alg:ICG} is given by $t_k = \lceil\sqrt{k}\rceil$. That is, we require more precise solutions for the \texttt{ICG} routine, only for later outer iterations. Furthermore, due to the deterministic nature of the \texttt{ICG} routine, further advances in the analysis of deterministic conditional gradient methods under additional assumptions on the constraint set $\setX$  (see, for example, \cite{garber2015faster, garber2021frank}) could be leveraged to improve the overall LMO complexity. 
\end{remark}

\subsection{The special cases of $T=1$ and $T=2$} \label{sec:specialcase}
We now discuss several intriguing points regarding the choice of tuning parameter $\beta$, for  the case of $T=2$, and the more standard case of $T=1$. Specifically, the linearization technique used in Algorithm~\ref{alg:CG-NASA-T-level} turns out to be not necessary for the case of $T=2$ and $T=1$ to obtain similar rates. However, without linearization, the choice of $\beta$ is dependent on the problem parameters for $T=2$. Whereas it turns out to be independent of the problem parameters (similar to Algorithm~\ref{alg:CG-NASA-T-level} and Theorem~\ref{thm:main-T-level} which holds for all $T\geq 1$) for $T=1$. As the outer function value estimates (i.e., $u^{k+1}_1$ sequence) are not required for the convergence analysis, we remove them in Algorithms~\ref{alg:CG-NASA} and~\ref{alg:CG-ASA}.

\begin{algorithm}[H]
\caption{NASA with Inexact Conditional Gradient Method (\texttt{NASA+ICG}) for $T=2$}
\begin{algorithmic}
        \STATE Replace Step 2 of Algorithm \ref{alg:CG-NASA-T-level} with the following:
        \STATE 2'. Update the average gradient $\vz$ and the function value estimate $\vu_2$ respectively as:
        \begin{align*}
          \vz^{k+1} = (1- \tau_k)\vz^k + \tau_k \vJ_2^{k+1} \vJ_1^{k+1}~\quad\text{and}\quad
          \vu_2^{k+1} = (1-\tau_k)\vu^k + \tau_k \vG_2^{k+1}
        \end{align*}
\end{algorithmic}
\label{alg:CG-NASA}
\end{algorithm}

\begin{algorithm}[H]
\caption{ASA with Inexact Conditional Gradient Method (\texttt{ASA+ICG}) for $T=1$}
\begin{algorithmic}
        \STATE Replace Step 2 of Algorithm \ref{alg:CG-NASA-T-level} with the following:
        \STATE 2''. Update the average gradient $\vz$ as: $ \vz^{k+1} = (1- \tau_k)\vz^k + \tau_k \vJ_1^{k+1}.$
        
\end{algorithmic}
\label{alg:CG-ASA}
\end{algorithm}

\begin{theorem}\label{thm:two-level}
Let Assumptions \ref{aspt:constraint}, \ref{aspt:lipschitz-T-level}, \ref{aspt:oracle-T-level} be satisfied by the optimization problem~\eqref{eq:opt-multi}. Let $\mathcal{C}_1, \mathcal{C}_2$ and $\mathcal{C}_3$ be some constants depending on the parameters $(\beta, \sigma^2, L, D_{\setX}, \delta)$, as defined in \eqref{eq:definition-C1-C2} and \eqref{eq:definition-C3}. Let $\tau_0 = 1, t_0 = 0$, $\tau_k = \frac{1}{\sqrt{N}}, t_k = \lceil\sqrt{k}\rceil, \forall k\geq1$, where $N$ is the total number of iterations. 
\begin{itemize}[leftmargin=2em]
    \item[(a)] Let $T=2$, and let $\{\vx^k, \vz^k, \vu_2^k\}_{k\geq 0}$ be the sequence generated by Algorithm \ref{alg:CG-NASA} with
\begin{equation}\label{thm:parameter-value-2-level}
    \beta_k \equiv\beta \geq 6\rho L_{\nabla F} + (2\rho +\frac{2}{3\rho})L_{\nabla f_1}L_{\vf_2}^2, \quad \rho>0.
\end{equation}
Then, we have $\forall N \geq 1$,
\begin{equation*}
        \E \left[ \| \mathcal{G}_{\setX}(\vx^R, \nabla F(\vx^R), \beta) \|^2\right] \leq \frac{\mathcal{C}_1}{\sqrt{N}}, \quad \E \left[ \| \vf_2(\vx^R) - \vu_2^R \|^2 \right]\leq \frac{\mathcal{C}_2}{\sqrt{N}}.
\end{equation*}
\item[(b)] Let $T=1$ and let $\{\vx^k, \vz^k\}_{k\geq 0}$ be the sequence generated by Algorithm \ref{alg:CG-ASA} with $\beta_k \equiv\beta>0$. Then, we have $\forall N \geq 1$,
\begin{equation*}
        \E \left[ \| \mathcal{G}_{\setX}(\vx^R, \nabla F(\vx^R), \beta) \|^2\right] \leq \frac{\mathcal{C}_3}{\sqrt{N}}.
\end{equation*}
\end{itemize}
All expectations are taken with respect to all random sequences generated by the respective algorithms and an independent random integer number $R$ uniformly distributed over $\{1,\dots,N\}$. In both cases, the number of calls to SFO and LMO to get an $\epsilon$-stationary point is upper bounded by $\mathcal{O}(\epsilon^{-2}), \mathcal{O}(\epsilon^{-3})$ respectively.
\end{theorem}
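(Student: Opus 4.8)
The plan is to follow the same merit-function strategy that underlies Theorem~\ref{thm:main-T-level}, but with the simpler merit functions adapted to $T=2$ and $T=1$. For part~(a), I would take
\[
W_{\alpha,\gamma}(\vx,\vz,\vu_2) = F(\vx) - F^\star - \eta(\vx,\vz) + \alpha\|\nabla F(\vx)-\vz\|^2 + \gamma\|\vf_2(\vx)-\vu_2\|^2,
\]
i.e.\ the $T$-level merit function with the linearization-tracking terms for inner levels collapsed to the single term $\gamma\|\vf_2(\vx)-\vu_2\|^2$ (no $u_1$ term, since it plays no role). The key point is that $\vz^{k+1}$ now tracks the \emph{true} composite gradient $\nabla f_1(\vu_2^{k+1})\nabla f_2(\vx^{k+1})$ rather than a linearized surrogate, so the bias analysis of $\|\nabla F(\vx^{k+1})-\vz^{k+1}\|^2$ must absorb a term coming from $\|\nabla f_1(\vu_2^{k+1})-\nabla f_1(\vf_2(\vx^{k+1}))\|$, which by $L_{\nabla f_1}$-Lipschitzness is controlled by $L_{\nabla f_1}\|\vu_2^{k+1}-\vf_2(\vx^{k+1})\|$ and then by $L_{\vf_2}$-dependence on the movement $\|\vx^{k+1}-\vx^k\|$. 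This is precisely why $\beta$ must be taken large relative to $L_{\nabla F}$, $L_{\nabla f_1}$ and $L_{\vf_2}^2$ as in \eqref{thm:parameter-value-2-level}: the descent inequality for $W$ requires the quadratic coefficient from $\eta$ to dominate the cross terms produced by this extra bias, and the constants $\rho$, $1/\rho$ arise from Young's inequality applied to split those cross terms.

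Concretely, the steps I would carry out are: (i) establish the one-step recursion for each component of $W$ — the descent of $F(\vx^{k+1})$ via $L_{\nabla F}$-smoothness along the line segment $\vx^{k+1}=\vx^k+\tau_k(\tilde\vy^k-\vx^k)$; the change in $\eta(\vx^{k+1},\vz^{k+1})$, using that $\eta$ is itself a min of quadratics hence has a manageable expansion (this is where the \texttt{ICG} accuracy enters, contributing the $\delta$-dependent terms via the bound on $\langle\vz^k,\tilde\vy^k-\vx^k\rangle$ against the exact sub-problem value); the recursion for $\|\nabla F(\vx^{k+1})-\vz^{k+1}\|^2$ using the moving-average update and Assumption~\ref{aspt:oracle-T-level}(a),(b),(c) to split into a $(1-\tau_k)$-contraction, a $\tau_k^2\hat\sigma^2$ variance term, and a $\tau_k$-order bias term handled as above; and the analogous recursion for $\|\vf_2(\vx^{k+1})-\vu_2^{k+1}\|^2$. (ii) Combine these with coefficients $\alpha,\gamma$ chosen so the telescoping sum of $\E[W_{\alpha,\gamma}(\vx^k,\vz^k,\vu_2^k) - W_{\alpha,\gamma}(\vx^{k+1},\vz^{k+1},\vu_2^{k+1})]$ leaves, on the right, a nonnegative multiple of $\beta\|\tilde\vy^k-\vx^k\|^2$ (equivalently $-\eta$) plus $\|\nabla F(\vx^k)-\vz^k\|^2$, minus $O(\tau_k^2 N)=O(\sqrt N)$ error from variance and $\delta$ terms. (iii) Invoke Proposition~\ref{prop:merit-function-grad-mapping} to convert the accumulated lower bound into a bound on $\sum_k \E\|\mathcal{G}_\setX(\vx^k,\nabla F(\vx^k),\beta)\|^2$, divide by $N$, and identify $R$ uniform to get the $\mathcal{C}_1/\sqrt N$ and $\mathcal{C}_2/\sqrt N$ bounds, with $\mathcal{C}_1,\mathcal{C}_2$ packaging $W(\vx^0,\vz^0,\vu_2^0)$, $\hat\sigma^2$ and the $D_\setX^2(1+\delta)$ terms. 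The LMO count is $\sum_{k=1}^N t_k = \sum\lceil\sqrt k\rceil = O(N^{3/2})=O(\epsilon^{-3})$ since $N=O(\epsilon^{-2})$, and the SFO count is $O(N)=O(\epsilon^{-2})$.

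For part~(b), the argument is strictly simpler: with $T=1$ there is no inner function at all, $\vz^{k+1}$ tracks $\nabla f_1(\vx^{k+1})=\nabla F(\vx^{k+1})$ directly, the $\gamma$-term disappears, and the merit function is just $F(\vx)-F^\star-\eta(\vx,\vz)+\alpha\|\nabla F(\vx)-\vz\|^2$. The bias term in the $\|\nabla F(\vx^{k+1})-\vz^{k+1}\|^2$ recursion is now only $L_{\nabla F}\|\vx^{k+1}-\vx^k\| \le L_{\nabla F}\tau_k D_\setX$, with no inner-function amplification, so no lower bound on $\beta$ is needed — any $\beta>0$ works after choosing $\alpha$ proportional to $\beta/L_{\nabla F}^2$, which is exactly why the method is parameter-free for $T=1$ (Remark~\ref{rmk:parameterfree}). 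I would run the same three steps (i)–(iii) with $\gamma=0$.

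I expect the main obstacle to be the bookkeeping in step~(i)–(ii) for $T=2$: getting the coefficients $\alpha$, $\gamma$, and the threshold on $\beta$ mutually consistent so that \emph{all} cross terms — those between $F$-descent and $\eta$-change, between the $\eta$-change and the gradient-tracking error, and between the gradient-tracking error and the function-value-tracking error — are simultaneously dominated. The delicate coupling is that enlarging $\gamma$ to absorb the $L_{\nabla f_1}L_{\vf_2}^2$ bias into the $\|\vf_2(\vx)-\vu_2\|^2$ recursion also enlarges the per-step error that $\gamma$-term contributes through its own variance ($\gamma\tau_k^2\sigma_{\vG_2}^2$), so one must verify the choice still yields a finite $\mathcal{C}_2$; this is routine but is where a careless constant would break the proof. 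The \texttt{ICG}/$\delta$ terms, by contrast, are handled exactly as in the $T$-level proof and should transfer verbatim.
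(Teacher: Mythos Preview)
Your plan matches the paper's proof: the same merit functions (the paper takes $\alpha=\rho/L_{\nabla F}$, $\gamma=3\rho L_{\nabla f_1}$ for part~(a) and $\alpha=\beta/(4L_{\nabla F}^2)$ for part~(b)), the same Young-inequality splitting of cross terms, and the same \texttt{ICG}/telescoping mechanics, followed by the projection non-expansiveness bound $\|\mathcal{G}_\setX(\vx^k,\nabla F(\vx^k),\beta)\|^2\le 2\beta^2\|\vd^k\|^2+2\|\nabla F(\vx^k)-\vz^k\|^2$.

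One correction on the mechanism in part~(a): the $\vz$-update in Algorithm~\ref{alg:CG-NASA} is \emph{identical} to that in Algorithm~\ref{alg:CG-NASA-T-level}, so the gradient-tracking recursion is not where the difference lies. What changes is the $\vu_2$-update, which drops the linearization correction $\langle \vJ_2^{k+1}, \vx^{k+1}-\vx^k\rangle$. Consequently the recursion for $\|\vf_2(\vx^{k+1})-\vu_2^{k+1}\|^2$ now contains $\tfrac{1}{\tau_k}\|\vf_2(\vx^{k+1})-\vf_2(\vx^k)\|^2\le \tau_k L_{\vf_2}^2\|\tilde\vd^k\|^2$ (using only Lipschitz continuity of $\vf_2$, not smoothness), a $\tau_k$-order term rather than the $O(\tau_k^2)$ term available with linearization. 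That term, multiplied by $\gamma$, is what must be absorbed by the $-\beta\tau_k\|\vd^k\|^2$ descent coming from $\eta$, and is the actual source of the lower bound~\eqref{thm:parameter-value-2-level} on $\beta$; see also Remark~\ref{rmk:parameterfree}. Once you locate the extra term there rather than in the $\vz$-recursion, your steps (i)--(iii) go through exactly as in the paper.
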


\begin{remark}\label{rmk:parameterfree}
While we can obtain the same complexities without using the linear approximation of the inner function for $T=2$, it seems necessary to have a parameter-free algorithm as the choice of $\beta$ in \eqref{thm:parameter-value-2-level} depends on the knowledge of the problem parameters. Indeed, the linearization term in \eqref{eq:update-u-T-level} helps use to better exploit the Lipschitz smoothness of the gradients get an error bound in the order of $\tau_k^2 \|d^k\|^2$ for estimating the inner function values. Without this term, we are only able to use the Lipschitz continuity of the inner functions and so the error estimate will increase to the order of $\tau_k \|d^k\|$. Hence, we need to choose a larger $beta$ (as in \eqref{thm:parameter-value-2-level}) to reduce $\|d^k\|$ and handle the error term without compromising the complexities. However, this is not the case for $T=1$ as it can be seen as a two-level problem whose inner function is exactly known (the identity map). In this case, the choice of $\beta$ is independent of the problem parameters with or without the linearization term.

\end{remark}

\subsection{High-Probability Convergence for $T=1$}
In this subsection, we establish an oracle complexity result with high-probability for the case of $T=1$. We first provide a notion of $(\epsilon,\delta)$-stationary point and a related tail assumption on the stochastic first-order oracle below.
\begin{definition}\label{def:gradmapping-highprob}
A point $\bar{\vx}\in\setX$ generated by an algorithm for solving~\eqref{eq:opt-multi} is called an $(\epsilon,\delta)$-stationary point, if we have $\| \mathcal{G}_{\setX}(\bar{\vx}, \nabla F(\bar{\vx}), \beta)\|^2\leq \epsilon$  with probability $1-\delta$.
\end{definition}

\begin{assumption}\label{aspt:oracle-highprob-1-level}
Let $\Delta^{k+1} = \nabla F(x^k) - J_1^{k+1}$ for $k\geq 0$. For each $k$, given $\setF_k$ we have $\E[\Delta^{k+1}|\setF_k]=0$ and $\|\Delta^{k+1}\|\big|\setF_k$ is $K$-sub-Gaussian.
\end{assumption}

The above assumption is commonly used in the literature; see \cite{hazan2014beyond, harvey2019tight, li2020high, zhou2018convergence}. We also refer to \cite{vershynin2018high} and Appendix \ref{sec:proof-high-prob} for additional details. The high-probability bound for solving non-convex constrained problems by Algorithm \ref{alg:CG-ASA} is given below.

\begin{theorem}\label{thm:one-level-highprob}
Let Assumptions \ref{aspt:constraint}, \ref{aspt:lipschitz-T-level}, \ref{aspt:oracle-highprob-1-level} be satisfied by the optimization problem~\eqref{eq:opt-multi} with $T=1$. Let $\tau_0 = 1, t_0 = 0$, $\tau_k = \frac{1}{\sqrt{N}}, t_k = \lceil\sqrt{k}\rceil, \forall k\geq1$, where $N$ is the total number of iterations. Let $T=1$ and let $\{\vx^k, \vz^k\}_{k\geq 0}$ be the sequence generated by Algorithm \ref{alg:CG-ASA} with $\beta_k \equiv\beta>0$. Then, we have $\forall N \geq 1, \delta>0$, with probability at least $1-\delta$,  
\begin{equation*}
    \min_{k=1,\dots, N} \left\| \mathcal{G}_{\setX}(\vx^k, \nabla F(\vx^k), \beta) \right\|^2 \leq \mathcal{O}\left(\frac{ K^2 \log (1/\delta)}{\sqrt{N}}\right)
\end{equation*}
Therefore, the number of calls to SFO and LMO to get an $(\epsilon,\delta)$-stationary point is upper bounded by $\mathcal{O}(\epsilon^{-2}\log^2 (1/\delta)), \mathcal{O}(\epsilon^{-3}\log^3 (1/\delta))$ respectively.
\end{theorem}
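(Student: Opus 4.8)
\textbf{Proof proposal for Theorem~\ref{thm:one-level-highprob}.}

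The plan is to run a high-probability version of the merit-function analysis that underlies Theorem~\ref{thm:two-level}(b), replacing every place where an expectation was taken over the oracle noise by a concentration argument for the martingale difference sequence $\{\Delta^{k+1}\}$. First I would specialize the merit function $W_{\alpha,\gamma}$ to $T=1$: since the inner function is the identity, the function-value-error terms $\gamma_i\|f_i(u_{i+1})-u_i\|^2$ disappear and we are left with $W_\alpha(x,z) = F(x) - F^\star - \eta(x,z) + \alpha\|\nabla F(x)-z\|^2$. The goal is a one-step inequality of the schematic form
\begin{equation*}
W_\alpha(\vx^{k+1},\vz^{k+1}) \le W_\alpha(\vx^k,\vz^k) - c\,\tau_k\big(\|\mathcal{G}_\setX\|^2 + \|\nabla F(\vx^k)-\vz^k\|^2\big) + \tau_k^2 A + \tau_k\,\langle \text{linear in }\Delta^{k+1}\rangle + \tau_k^2\|\Delta^{k+1}\|^2 B,
\end{equation*}
where $A,B$ collect the deterministic error contributions (including the $\texttt{ICG}$ inexactness $\delta$ and $D_\setX$, $L_{\nabla F}$ terms). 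Two ingredients feed this: the descent estimate for $F(\vx^{k+1})$ and $\eta$ from the $\texttt{ICG}$ subroutine (deterministic, already available from the analysis behind Theorem~\ref{thm:two-level}), and the recursion for $\|\nabla F(\vx^{k+1})-\vz^{k+1}\|^2$, where $\vz^{k+1}=(1-\tau_k)\vz^k+\tau_k\vJ_1^{k+1}$; expanding this square produces the cross term $\tau_k\langle \nabla F(\vx^k)-\vz^k, \Delta^{k+1}\rangle$ (and similar), a deterministic $O(\tau_k^2)$ bias/drift term using $L_{\nabla F}$-smoothness and $\|\vx^{k+1}-\vx^k\|\le \tau_k D_\setX$, and a $\tau_k^2\|\Delta^{k+1}\|^2$ term.

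Next I would telescope from $k=1$ to $N$ with $\tau_k=1/\sqrt N$ (handling $k=0$ separately since $\tau_0=1$), which gives, after rearranging,
\begin{equation*}
\frac{1}{\sqrt N}\sum_{k=1}^N \big(\|\mathcal{G}_\setX(\vx^k,\nabla F(\vx^k),\beta)\|^2\big) \;\lesssim\; W_\alpha(\vx^0,\vz^0) + \frac{A'}{\sqrt N}\cdot N\cdot\frac{1}{N} + \underbrace{\frac{1}{\sqrt N}\sum_{k=1}^N \langle \cdot,\Delta^{k+1}\rangle}_{(\mathrm{I})} + \underbrace{\frac{1}{N}\sum_{k=1}^N (\|\Delta^{k+1}\|^2 - \E[\cdot])}_{(\mathrm{II})} + \frac{1}{N}\sum_{k=1}^N \E[\|\Delta^{k+1}\|^2\mid\setF_k].
\end{equation*}
The last term is $O(K^2)$ by sub-Gaussianity (a $K$-sub-Gaussian norm has second moment $O(K^2)$). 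For $(\mathrm{I})$: the summands form a martingale difference sequence; I need a bound on the conditional variance, which is controlled by $\E[\|\Delta^{k+1}\|^2\mid\setF_k]\cdot\|(\text{the other factor})\|^2$, and the other factor ($\nabla F(\vx^k)-\vz^k$ up to constants) is \emph{a priori} bounded: $\|\vz^k\|$ stays in a ball because it is a convex combination of stochastic Jacobians, but since those are only sub-Gaussian, not a.s. bounded, I would instead use the self-normalized / Freedman-type concentration for martingales with sub-Gaussian increments (cf.\ the references \cite{harvey2019tight, li2020high}), yielding a deviation of order $\sqrt{\log(1/\delta)}\cdot\sqrt{\sum_k \tau_k^2 \cdot (\text{variance proxy})}$, which after normalization is $O(K\sqrt{\log(1/\delta)}/\sqrt N)$ times a term that can be absorbed into the left side via Young's inequality (this is exactly why the merit function keeps the $\|\nabla F(\vx^k)-\vz^k\|^2$ slack). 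For $(\mathrm{II})$: $\|\Delta^{k+1}\|^2$ is sub-exponential with parameter $O(K^2)$, so Bernstein for martingales gives a deviation $O\big(K^2(\sqrt{\log(1/\delta)/N} + \log(1/\delta)/N)\big)$. Combining, with probability $\ge 1-\delta$,
\begin{equation*}
\min_{k=1,\dots,N}\|\mathcal{G}_\setX(\vx^k,\nabla F(\vx^k),\beta)\|^2 \le \frac{1}{N}\sum_{k=1}^N \|\mathcal{G}_\setX(\vx^k,\nabla F(\vx^k),\beta)\|^2 \le \mathcal{O}\!\left(\frac{W_\alpha(\vx^0,\vz^0) + K^2\log(1/\delta)}{\sqrt N}\right),
\end{equation*}
and setting the right side $\le\epsilon$ gives $N = \mathcal{O}(\epsilon^{-2}\log^2(1/\delta))$ SFO calls and $\sum_{k=1}^N t_k = \sum_{k=1}^N\lceil\sqrt k\rceil = \mathcal{O}(N^{3/2}) = \mathcal{O}(\epsilon^{-3}\log^3(1/\delta))$ LMO calls.

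The main obstacle I anticipate is controlling term $(\mathrm{I})$ without an a.s.\ bound on the noise: I cannot simply invoke Azuma–Hoeffding since the increments are unbounded, and a naive sub-Gaussian-sum bound would require the multiplying factor $\nabla F(\vx^k)-\vz^k$ to be deterministically bounded, which it is not. The clean fix is to carry the concentration with the conditional-variance-dependent (Freedman/self-normalized) inequality and then \emph{absorb} the resulting $\sqrt{\sum_k \tau_k^2 \|\nabla F(\vx^k)-\vz^k\|^2}$ factor using $2ab \le \kappa a^2 + \kappa^{-1}b^2$ against the negative $\sum_k c\,\tau_k\|\nabla F(\vx^k)-\vz^k\|^2$ term already present in the telescoped bound — choosing $\kappa$ small enough that a definite fraction of that negative term survives. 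A secondary technical point is that, because $\tau_0 = 1$ and $t_0 = 0$, the $k=0$ iteration must be bounded crudely (e.g.\ $W_\alpha(\vx^1,\vz^1) \le W_\alpha(\vx^0,\vz^0) + O(1) + O(\|\Delta^1\|^2)$), contributing only an $O(1/\sqrt N)$ additive term after the final normalization, and an $O(K^2\log(1/\delta))$ term under the tail bound on the single variable $\|\Delta^1\|^2$.
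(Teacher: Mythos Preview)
Your proposal is correct and follows essentially the same route as the paper: specialize the merit function to $T=1$, derive the one-step inequality $W_{k+1}\le W_k - c\tau_k(\|\vd^k\|^2+\|\nabla F(\vx^k)-\vz^k\|^2) + \mathbf{R}_k$, telescope, and control the stochastic part of $\sum_k\mathbf{R}_k$ with (i) a sub-exponential tail bound on $\sum_k\tau_k^2\|\Delta^{k+1}\|^2$ and (ii) a Freedman-type inequality on the martingale cross term, whose variance proxy $\sum_k\tau_k^2\|\nabla F(\vx^k)-\vz^k\|^2$ is absorbed back into the negative telescoped term via Young's inequality---exactly the absorption trick you identified. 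One technical point you left implicit but the paper treats explicitly: the term $\tfrac{L_{\nabla\eta}}{2}\|\vz^{k+1}-\vz^k\|^2$ in $\mathbf{R}_k$ is not of the form $\tau_k^2\|\Delta^{k+1}\|^2$, since $\vz^k$ carries all past noise; the paper handles it by writing $\vz^k=\sum_{i<k}\alpha_{i,k}\vJ_1^{i+1}$ (a convex combination, since $\tau_0=1$) and then applying a separate sub-exponential tail bound to $\sum_k\tau_k^2\sum_{i<k}\alpha_{i,k}\|\Delta^{i+1}\|^2$, which by convexity has sub-exponential norm $\le\tau_k^2K^2$ termwise.
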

\begin{remark}
To the best of our knowledge, the above result is (\textrm{i}) the first high-probability bound for one-sample stochastic conditional gradient-type algorithm for the case of $T=1$, and (\textrm{ii}) the first high-probability bound for constrained stochastic optimization algorithms in the non-convex setting; see Appendix J of \cite{madden2021high}.
\end{remark}

\section{Proof Sketch of Main Results}\label{sec:sketch}

In this section, we only present the proof sketch. The complete proofs are provided in the appendix. For convenience, let $\vu_{T+1} = \vx$, and we denote $H_k$ as the function value of the subproblem at step $k$, $\vy^k$ as the optimal solution of the subproblem i.e.,
\begin{equation}\label{eq:definition-etak-yk}
    H_k(\vy) := H(\vy; \vx^k, \vz^k, \beta_k), \quad\vy^{k} = \underset{\vy\in\setX}{\argmin}~H_k(\vy).
\end{equation}
Then, the proof of Theorem \ref{thm:main-T-level} proceeds via the following steps:
\begin{enumerate}[leftmargin=0.2in]
    \item We first leverage the merit function $W_k:= W_{\alpha, \gamma}(\vx^k,\vz^k,\vu^k)$ defined in \eqref{eq:definition-merit-fucntion-T-level} with appropriate choices of $\alpha, \gamma$ for any $\beta>0$ to obtain
\begin{equation*}
\begin{split}
  W_{k+1} - W_{k} \leq& -\frac{\tau_k}{2} \left( \beta \left[\| \vd^k \|^2  + \sum_{i=1}^{T} \| \vf_{i}(\vu_{i+1}^k) - \vu_{i}^k \|^2  \right]  +\frac{\beta}{20L_{\nabla F}^2}\| \nabla F(\vx^k) -\vz^k\|^2 \right) \\
  &+ \mathbf{R}_k + \tau_k\left(\frac{12}{5} + \frac{20L_{\nabla F}^2}{\beta^2}\right)\left( H_k(\tilde{\vy}^k) - H_k(\vy^k)\right), \quad \forall k\geq 0
\end{split}
\end{equation*}
where $\mathbf{R}_k$ is the residual term (see \eqref{eq:lemma-definition-Rk-T-level}) and $\E[\mathbf{R}_k | \setF_k] \leq \hat{\sigma}^2 \tau_k^2$, as shown in Proposition \ref{prop:Rk}.
\item Telescoping the above inequality, in Lemma \ref{lem:merit-function-T-level} we obtain the following:
\begin{equation*}
\begin{split}
   &\sum_{k=1}^{N}\tau_k \left[ \beta \left(\| \vd^k \|^2  + \sum_{i=1}^{T} \| \vf_{i}(\vu_{i+1}^k) - \vu_{i}^k \|^2  \right)  +\frac{\beta}{20 L_{\nabla F}^2}\| \nabla F(\vx^k) -\vz^k\|^2 \right]\\
   &\leq 2W_0 + 2\sum_{k=0}^{N} \mathbf{R}_k + \left(\frac{24}{5} + \frac{40L_{\nabla F}^2}{\beta^2}\right)\sum_{k=0}^{N}\tau_k\left( H_k(\tilde{\vy}^k) - H_k(\vy^k)\right), \quad \forall N\geq 1.
\end{split}
\end{equation*}
\item To further control the error term $H_k(\tilde{\vy}^k) - H_k(\vy^k)$ introduced by the \texttt{ICG} method, we set $t_k$, the number of iterations in \texttt{ICG} method at step $k$, to $\lceil \sqrt{k}\rceil$. By Lemma \ref{lem:fw}, we therefore have
\begin{equation*}
    H_k(\tilde{\vy}^k) - H_k(\vy^k)\leq \frac{2\beta D_{\setX}^2 (1+\delta)}{t_k+2} \leq \frac{2\beta D_{\setX}^2 (1+\delta)}{\sqrt{k}}, \quad \forall k\geq 1.
\end{equation*}
Also, with the choice of $\tau_k=\frac{1}{\sqrt{N}}$ and $\vz^0=0$, we can conclude that
\begin{equation*}
    \sum_{k=0}^{N}\tau_k\left( H_k(\tilde{\vy}^k) - H_k(\vy^k)\right) \leq  \frac{2\beta D_{\setX}^2 (1+\delta)}{\sqrt{N}}\sum_{k=1}^{N} \frac{1}{\sqrt{k}} \leq 4\beta D_{\setX}^2 (1+\delta).
\end{equation*}
\item Then, taking expectation of both sides and by the definition of random integer $R$, we have 
\begin{equation*}
\begin{split}
    \E\left[ \beta \left(\| \vd^R \|^2  + \sum_{i=1}^{T} \| \vf_{i}(\vu_{i+1}^R) - \vu_{i}^R \|^2  \right)  + \frac{\beta}{20 L_{\nabla F}^2}\| \nabla F(\vx^R) -\vz^R\|^2 \right]   \leq 2W_0 + \mathcal{B},
\end{split}
\end{equation*}
$\forall N \geq 1$, where $\mathcal{B}$ is a constant depending on the problem parameters $(\beta, \sigma^2, L, D_{\setX}, T, \delta)$.
\item As a result, we can obtain \eqref{eq:thm-gradient-mapping} and \eqref{eq:thm-function-value-error} by noting that $\forall k\geq 1$
\begin{equation*}
\begin{split}
  \| \mathcal{G}(\vx^k, \nabla F(\vx^k), \beta) \|^2 &\leq  2\beta^2 \| \vd^k \|^2 +2\beta^2\left\| \proj_{\setX}\left(\vx^k- \frac{1}{\beta}\nabla F(\vx^k)\right)-\proj_{\setX}\left(\vx^k- \frac{1}{\beta}\vz^k\right)\right\|^2\\
  &\leq 2\beta^2 \| \vd^k \|^2 + 2\|\nabla F(\vx^k)-\vz^k\|^2.
\end{split}
\end{equation*}
where the second inequality follows the non-expansiveness of the projection operator.
\end{enumerate}
The proofs of Theorems \ref{thm:two-level} and \ref{thm:one-level-highprob} follow the same argument with appropriate modifications. The high-probability convergence proof of Theorem \ref{thm:one-level-highprob} mainly consists of controlling the tail probability of the residual term $\mathbf{R}_k$ being large.

\section{Discussion}\label{sec:disc}
In this work, we propose and analyze projection-free conditional gradient-type algorithms for constrained stochastic multi-level composition optimization of the form in~\eqref{eq:opt-multi}. We show that the oracle complexity of the proposed algorithms is level-independent in terms of the target accuracy. Furthermore, our algorithm does not require any increasing order of mini-batches under standard unbiasedness and bounded second-moment assumptions on the stochastic first-order oracle, and is parameter-free. Some open questions for  future research: (\texttt{i}) Considering the one-sample setting, either improving the LMO complexity from $\mathcal{O}(\epsilon^{-3})$ to $\mathcal{O}(\epsilon^{-2})$ for general closed convex constraint sets or establishing lower bounds showing that $\mathcal{O}(\epsilon^{-3})$ is necessary while keeping the SFO in the order of $\mathcal{O}(\epsilon^{-2})$, is extremely interesting; and (\texttt{ii}) Providing high-probability bounds for stochastic multi-level composition problems ($T>1$) and under sub-Gaussian or heavy-tail assumptions (as in \cite{madden2021high, lou2022beyond}) is interesting to explore.

\subsection*{Acknowledgment}
TX was partially supported by a seed grant from the Center for Data Science and Artificial Intelligence Research,  UC Davis and National Science Foundation (NSF) grant CCF-1934568. KB was partially supported by a seed grant from the Center for Data Science and Artificial Intelligence Research, UC Davis and NSF grant DMS-2053918. SG was partially supported by an NSERC Discovery Grant. 


\bibliographystyle{alpha}
\bibliography{citation}

\clearpage
\appendix

\begin{center}
\hrule height 4pt
\vskip 0.25in
    {\Large\bf Supplementary Materials}
\vskip 0.29in
\hrule
\end{center}

The supplementary materials are organized as follows. Appendix \ref{sec:eg} provides motivating examples for stochastic multilevel optimization. Appendix \ref{sec:tech-lemma} introduces the essential technical lemmas to complete the proof. We present the whole proofs of Theorem \ref{thm:main-T-level} and Theorem \ref{thm:two-level} in Appendix \ref{sec:proof-thm-T} and \ref{sec:proof-thm-one-two}. Finally, we present the high-probability convergence analysis particularly for the case when $T=1$ in Appendix \ref{sec:proof-high-prob}.

\section{Motivating Examples}\label{sec:eg}
Problems of the form in~\eqref{eq:opt-multi} are generalizations of the standard constrained stochastic optimization problem which is obtained when $T=1$, and arise in several machine learning applications. Some examples include sparse additive modeling in non-parametric statistics~\cite[Section 4.1]{wang2017stochastic}, Bayesian optimization~\cite{astudillo2021bayesian}, model-agnostic meta-learning~\cite{chen2021solving,fallah2021generalization}, distributionally robust optimization~\cite{qi2021online}, training graph neural networks~\cite{cong2020minimal}, reinforcement learning~\cite[Setion 1.1]{wang2016accelerating} and AUPRC maximization~\cite{qi2021stochastic, wang2022momentum,qiu2022large}. Below, we provide a concrete motivating example from the field of risk-averse stochastic optimization~\cite{ruszczynski2006optimization}. 

The mean-deviation risk-averse optimization is given by the following optimization problem
\begin{align*}
\max_{x\in \setX}\left\{ \E[U(x,\xi)] -  \rho \E \left[ \{\E[U(x,\xi)]  - U(x,\xi) \}^2  \right]^{1/2} \right\}.
\end{align*}
As noted by~\cite{yang2019multi-level},~\cite{ruszczynski2021stochastic} and ~\cite{balasubramanian2020stochastic}, the above problem is a  stochastic 3-level composition optimization problem with
\begin{align*}
f_3 := \E[U(x,\xi)] \qquad f_2(z, x) := \E[\{z -U(x,\xi)\}^2] \qquad f_1((y_1,y_2)) := y_1 - \sqrt{y_2+\delta}.
\end{align*}
Here, $\delta>0$ is added to make the square root function smooth. In particular, we consider a semi-parametric data generating process given by a sparse single-index model of the form $b = g(\langle a, x^* \rangle) +\zeta$, where $g:\mathbb{R} \to \mathbb{R}$ is called the link function, $x^* \in \mathbb{R}^d$ is assumed to be a sparse vector and $\langle\cdot,\cdot\rangle$ represents the Euclidean inner-product between two vectors. Such single-index models are widely used in statistics, machine learning and economics~\cite{ruppert2003semiparametric}. A standard choices of the link function $g$ is the square function, in which case, the model is also called as the sparse phase retrieval model~\cite{wang2017solving}. Here, $a$ is the input data which is assumed to be independent of the noise $\zeta$. In this case, $\xi := (a,b)$ and the if we consider the squared-loss, then $U(x,\xi):= (b - (\langle a, x\rangle)^2)^2$ and is non-convex in $x$. The goal is to estimate the sparse index vector $x^*$ in a risk-averse manner, as they are well-known to provide stable solutions~\cite{yang2019multi-level}. To encourage sparsity, the set $\setX$ is the $\ell_1$ ball~\cite{jaggi2013revisiting}.

\section{Technical Lemmas}\label{sec:tech-lemma}

\begin{lemma}\label{lem:F-smooth}{\normalfont (Smoothness of Composite Functions \cite{balasubramanian2020stochastic})}
Assume that Assumption \ref{aspt:lipschitz-T-level} holds.
\begin{itemize}[leftmargin=2em]
    \item [a)] Define $F_i(\vx) = \vf_i \circ \vf_{i+1} \circ \cdots \circ \vf_{T} (\vx)$. Under , the gradient of $F_i$ is Lipschitz continuous with the constant
\begin{equation*}
    L_{\nabla F_i} = \sum_{j=i}^{T} \left[ L_{\nabla f_j} \prod_{l=i}^{j-1} L_{f_l} \prod_{l=j+1}^T L_{f_l}^2 \right].
\end{equation*}
    
    \item [b)] Define
\begin{equation}\label{eq:definition-Cj}
    \begin{split}
        &R_1 = L_{\nabla \vf_1} L_{\vf_2} \cdots L_{\vf_T}, \qquad R_j = L_{\vf_{1}}\cdots L_{\vf_{j-1}} L_{\nabla \vf_j} L_{\vf_{j+1}} \cdots L_{\vf_{T}} / L_{\vf_j}, \quad 2\leq j\leq T-1,\\
        &C_2 = R_1, \qquad C_j=\sum_{i=1}^{j-2} R_i \left( \prod_{l=i+1}^{j-1} L_{\vf_l} \right), \quad 3\leq j\leq T
    \end{split}
\end{equation}
and let $\vu_{T+1} = \vx$. Then, for $T\geq 2$, we have
\begin{equation}\label{lem:gradient-chain-diff-T-level}
    \left\| \nabla F(\vx) - \prod_{i=1}^{T} \nabla \vf_{T+1-i} (\vu_{T+2-i}) \right\|\leq \sum_{j=2}^{T} C_j \| \vf_j(\vu_{j+1}) - \vu_{j} \|.
\end{equation}
\end{itemize}

\end{lemma}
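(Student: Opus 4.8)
The plan is to prove the two parts separately, each by an elementary induction/telescoping argument combined with operator-norm estimates, using throughout the transpose-Jacobian convention (so that $\|\nabla\vf_i(\cdot)\|\le L_{\vf_i}$ and each $F_i:=\vf_i\circ\cdots\circ\vf_T$ is $\big(\prod_{l=i}^{T}L_{\vf_l}\big)$-Lipschitz).

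For part (a) I would induct downward on $i$, from $i=T$ to $i=1$. The base case $i=T$ is immediate: $F_T=\vf_T$ and the claimed formula reduces to $L_{\nabla F_T}=L_{\nabla\vf_T}$. For the inductive step write $F_i=\vf_i\circ F_{i+1}$, so the chain rule gives $\nabla F_i(\vx)=\nabla F_{i+1}(\vx)\,\nabla\vf_i(F_{i+1}(\vx))$. I would then bound $\|\nabla F_i(\vx)-\nabla F_i(\vx')\|$ by adding and subtracting $\nabla F_{i+1}(\vx')\,\nabla\vf_i(F_{i+1}(\vx))$, using submultiplicativity of the Frobenius norm, the bounds $\|\nabla\vf_i(\cdot)\|\le L_{\vf_i}$ and $\|\nabla F_{i+1}(\cdot)\|\le\prod_{l=i+1}^{T}L_{\vf_l}$, the Lipschitz constant $\prod_{l=i+1}^{T}L_{\vf_l}$ of $F_{i+1}$, and the inductive Lipschitz continuity of $\nabla F_{i+1}$ and $\nabla\vf_i$. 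This yields the recursion $L_{\nabla F_i}=L_{\vf_i}L_{\nabla F_{i+1}}+L_{\nabla\vf_i}\big(\prod_{l=i+1}^{T}L_{\vf_l}\big)^2$, and unrolling it — checking that the $j=i$ term produced matches the second summand — gives the stated closed form. (Alternatively, one simply substitutes the claimed closed form into this recursion and verifies equality.)

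For part (b) I would abbreviate $A_j:=\nabla\vf_j(\vy_j)$ with $\vy_j:=\vf_{j+1}\circ\cdots\circ\vf_T(\vx)$ (so $\vy_T=\vx$), giving $\nabla F(\vx)=A_TA_{T-1}\cdots A_1$, and $B_j:=\nabla\vf_j(\vu_{j+1})$, giving $\prod_{i=1}^{T}\nabla\vf_{T+1-i}(\vu_{T+2-i})=B_TB_{T-1}\cdots B_1$. Since $\vy_T=\vu_{T+1}=\vx$ we have $A_T=B_T$, so the standard telescoping identity for a difference of products gives $\nabla F(\vx)-\prod_{i=1}^{T}\nabla\vf_{T+1-i}(\vu_{T+2-i})=\sum_{k=1}^{T-1}(B_T\cdots B_{k+1})(A_k-B_k)(A_{k-1}\cdots A_1)$, the $k=T$ term vanishing. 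Taking norms, bounding each block of $A$'s and $B$'s by the corresponding product of $L_{\vf_l}$, and using $\|A_k-B_k\|\le L_{\nabla\vf_k}\|\vy_k-\vu_{k+1}\|$, it remains to control $\|\vy_k-\vu_{k+1}\|$. Using $\vy_k=\vf_{k+1}(\vy_{k+1})$ and a triangle inequality gives the secondary recursion $\|\vy_k-\vu_{k+1}\|\le L_{\vf_{k+1}}\|\vy_{k+1}-\vu_{k+2}\|+\|\vf_{k+1}(\vu_{k+2})-\vu_{k+1}\|$ with base case $\|\vy_T-\vu_{T+1}\|=0$; unrolling it writes $\|\vy_k-\vu_{k+1}\|$ as a nonnegative combination of the errors $\|\vf_j(\vu_{j+1})-\vu_j\|$ for $j=k+1,\dots,T$, with coefficients that are products of $L_{\vf_l}$'s. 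Substituting this back and interchanging the two finite summations, the coefficient of each $\|\vf_j(\vu_{j+1})-\vu_j\|$ collapses to the constant $C_j$ built from the auxiliary quantities $R_i$ in~\eqref{eq:definition-Cj}.

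I expect the only genuinely finicky step to be the last one in part (b): reindexing the double sum carefully so that the collected coefficient of each $\|\vf_j(\vu_{j+1})-\vu_j\|$ matches the stated $C_j$ (and, correspondingly, pinning down the exact grouping of the $R_i$'s). Everything else — the chain rule, Frobenius-norm submultiplicativity, and the two one-line recursions — is routine, so the write-up should be short once the bookkeeping constants are in place.
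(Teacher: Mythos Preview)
The paper does not actually prove this lemma; it is stated in the technical-lemmas appendix and attributed to an earlier reference, with no argument supplied. Your proposal is the standard route and is correct in structure: for part~(a), the downward induction via the recursion $L_{\nabla F_i}=L_{\vf_i}L_{\nabla F_{i+1}}+L_{\nabla\vf_i}\big(\prod_{l=i+1}^{T}L_{\vf_l}\big)^{2}$ is exactly the intended computation and unrolls to the stated closed form; for part~(b), the telescoping identity for the difference of matrix products, the bound $\|A_k-B_k\|\le L_{\nabla\vf_k}\|\vy_k-\vu_{k+1}\|$, and the secondary recursion $\|\vy_k-\vu_{k+1}\|\le L_{\vf_{k+1}}\|\vy_{k+1}-\vu_{k+2}\|+\|\vf_{k+1}(\vu_{k+2})-\vu_{k+1}\|$ are the natural ingredients. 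You are right that the only place demanding care is the final double-sum reindexing to match the stated $C_j$; that step is pure bookkeeping and involves no new idea.
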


\begin{lemma}\label{lem:eta-smooth}{\normalfont (Smoothness of $\eta(\cdot, \cdot)$~\cite{ghadimi2020single})}
For fixed $\beta>0$ and, $\eta(\vx, \vz)$ defined in \eqref{eq:definition-eta}, 
the gradient of $\eta(\vx,\vz)$ w.r.t. $(\vx, \vz)$ is Lipschitz continuous with the constant $L_{\nabla\eta}=2 \sqrt{(1+\beta)^{2}+\left(1+\frac{1}{2 \beta}\right)^{2}}$.
\end{lemma}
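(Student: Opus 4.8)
The plan is to reduce the value function $\eta$ to a Moreau-envelope-type object whose gradient has a closed form, and then exploit the non-expansiveness of the Euclidean projection. First I would solve the inner minimization in \eqref{eq:definition-eta} explicitly: completing the square in $\vy$ gives $\langle\vz,\vy-\vx\rangle + \frac{\beta}{2}\|\vy-\vx\|^2 = \frac{\beta}{2}\|\vy - (\vx - \frac{1}{\beta}\vz)\|^2 - \frac{1}{2\beta}\|\vz\|^2$, so the minimizer (unique, since $H$ is $\beta$-strongly convex in $\vy$) is $\vy^\ast(\vx,\vz) = \proj_\setX(\vx - \frac{1}{\beta}\vz)$, and $\eta(\vx,\vz) = \frac{\beta}{2}\|\vy^\ast - (\vx-\frac{1}{\beta}\vz)\|^2 - \frac{1}{2\beta}\|\vz\|^2$.

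Next I would compute $\nabla\eta$ via the envelope theorem. Because $H(\vy;\vx,\vz,\beta)$ is jointly $C^1$ in $(\vy,\vx,\vz)$ and strongly convex in $\vy$ with a unique minimizer over the \emph{fixed} set $\setX$, the value function is continuously differentiable and its gradient equals the partial gradient of $H$ evaluated at $\vy^\ast$ (the dependence through $\vy^\ast$ drops out by first-order optimality). This yields $\nabla_\vx\eta = \beta(\vx - \vy^\ast) - \vz$ and $\nabla_\vz\eta = \vy^\ast - \vx$. It is worth stressing that $\eta$ is smooth even though the map $\vy^\ast$ is only Lipschitz (not differentiable), because we never need to differentiate through the projection itself.

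The crux is then to bound $\|\nabla\eta(\vx,\vz) - \nabla\eta(\vx',\vz')\|$. The only nonlinear ingredient is $\vy^\ast = \proj_\setX(\vx - \frac{1}{\beta}\vz)$; since Euclidean projection onto a closed convex set is non-expansive, $\|\vy^\ast - \vy'^\ast\| \le \|(\vx-\vx') - \frac{1}{\beta}(\vz - \vz')\| \le \|\vx - \vx'\| + \frac{1}{\beta}\|\vz-\vz'\|$. Substituting the gradient formulas, writing $\Delta\vx,\Delta\vz,\Delta\vy^\ast$ for the respective differences, and applying the triangle inequality to the two components $\beta(\Delta\vx - \Delta\vy^\ast) - \Delta\vz$ and $\Delta\vy^\ast - \Delta\vx$ reduces the problem to collecting the coefficients of $\|\Delta\vx\|$ and $\|\Delta\vz\|$; combining the two components by Cauchy--Schwarz then produces a bound of the stated form $2\sqrt{(1+\beta)^2 + (1+\frac{1}{2\beta})^2}\,\|(\Delta\vx,\Delta\vz)\|$ (the half-factors appearing in $\frac{1}{2\beta}$ can be obtained by sharpening the projection estimate through firm non-expansiveness, i.e.\ writing $\proj_\setX = \frac{1}{2}(I + R)$ with the non-expansive reflection $R = 2\proj_\setX - I$). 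I expect the main obstacle to be purely the constant bookkeeping: choosing the tightest grouping of terms so that the coefficients land exactly on $(1+\beta)$ and $(1+\frac{1}{2\beta})$ rather than a looser constant; the conceptual steps (closed-form minimizer, envelope theorem, non-expansiveness) are routine.
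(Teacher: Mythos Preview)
The paper does not actually prove this lemma; it is stated as a technical result quoted from \cite{ghadimi2020single} with no proof given in the text. So there is nothing in the paper to compare your argument against.

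That said, your approach is correct and is essentially the standard one: identify the minimizer as $\vy^\ast=\proj_\setX(\vx-\vz/\beta)$, read off $\nabla_\vx\eta=-\vz-\beta(\vy^\ast-\vx)$ and $\nabla_\vz\eta=\vy^\ast-\vx$ via Danskin's theorem (these are exactly the partial gradients the paper itself uses later in the proof of Lemma~\ref{lem:merit-function-T-level}), and then control differences through the non-expansiveness of $\proj_\setX$. One minor remark on the constant: with the naive triangle inequality and non-expansiveness you already get $\|\Delta\nabla_\vx\eta\|\le 2\beta\|\Delta\vx\|+2\|\Delta\vz\|$ and $\|\Delta\nabla_\vz\eta\|\le 2\|\Delta\vx\|+\tfrac{1}{\beta}\|\Delta\vz\|$, which after Cauchy--Schwarz yields a Lipschitz constant $\sqrt{4\beta^2+8+1/\beta^2}$ that is in fact \emph{smaller} than the stated $2\sqrt{(1+\beta)^2+(1+\tfrac{1}{2\beta})^2}$. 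So you do not need the firm non-expansiveness refinement you allude to; the straightforward bound already suffices, and matching the quoted constant exactly is unnecessary since any valid $L_{\nabla\eta}$ works for the downstream analysis.
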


\begin{lemma}\label{lem:fw} {\normalfont (Convergnece of \texttt{ICG}~\cite{jaggi2013revisiting})}
Let $\tilde{\vy}^k$ be the vector output by Algorithm \ref{alg:ICG} at step $k$, and $\vy^k$ be the optimal solution of the subproblem \ref{eq:definition-etak-yk}, then under Assumption \ref{aspt:constraint}
\begin{equation*}
        \frac{\beta}{2}\| \tilde{\vy}^k - \vy^k \|^2\leq H_k(\tilde{\vy}^k) - H_k(\vy^k)\leq \frac{2\beta D_{\setX}^2 (1+\delta)}{t_k+2}\\
\end{equation*}
where $\delta$ defined in Algorithm \ref{alg:ICG} is the quality of the linear minimization procedure.
\end{lemma}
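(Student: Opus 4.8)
The plan is to recognize Algorithm~\ref{alg:ICG} as the \emph{deterministic} conditional gradient method applied to the quadratic subproblem objective $H_k(\vy) = \langle \vz^k, \vy-\vx^k\rangle + \tfrac{\beta}{2}\|\vy-\vx^k\|^2$ (with $\beta = \beta_k$), whose gradient $\nabla H_k(\vy) = \vz^k + \beta(\vy-\vx^k)$ is exactly the vector $\vz + \beta(w^t-\vx)$ appearing in Step~1, and which is simultaneously $\beta$-strongly convex and $\beta$-smooth. The left inequality is the easy half: since $\vy^k = \argmin_{\vy\in\setX} H_k(\vy)$ and $\tilde{\vy}^k \in \setX$, the first-order optimality of the constrained minimizer gives $\langle \nabla H_k(\vy^k), \tilde{\vy}^k - \vy^k\rangle \geq 0$, and combining this with the $\beta$-strong-convexity inequality $H_k(\tilde{\vy}^k) \geq H_k(\vy^k) + \langle \nabla H_k(\vy^k), \tilde{\vy}^k - \vy^k\rangle + \tfrac{\beta}{2}\|\tilde{\vy}^k-\vy^k\|^2$ yields $\tfrac{\beta}{2}\|\tilde{\vy}^k-\vy^k\|^2 \leq H_k(\tilde{\vy}^k) - H_k(\vy^k)$ at once.

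For the right inequality I would track the primal gap $h_t := H_k(w^t) - H_k(\vy^k) \geq 0$ over the inner iterations (noting $\tilde{\vy}^k = w^{t_k}$) and establish $h_t \leq \frac{2\beta D_\setX^2(1+\delta)}{t+2}$. The workhorse is a single descent step: Step~2 is exact line search, i.e.\ $\mu_t$ minimizes the convex quadratic $\mu \mapsto H_k(w^t + \mu(\vv^t-w^t))$ over $[0,1]$, so $H_k(w^{t+1}) \leq H_k(w^t) + \mu\langle \nabla H_k(w^t), \vv^t-w^t\rangle + \tfrac{\mu^2\beta}{2}\|\vv^t-w^t\|^2$ for \emph{every} comparison step $\mu\in[0,1]$. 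I then bound the inner product using the inexactness condition of Step~1, which (since $\vy^k\in\setX$) gives $\langle \nabla H_k(w^t),\vv^t\rangle \leq \langle \nabla H_k(w^t),\vy^k\rangle + \frac{\beta D_\setX^2\delta}{t+2}$, followed by convexity $\langle \nabla H_k(w^t),\vy^k-w^t\rangle \leq H_k(\vy^k) - H_k(w^t) = -h_t$ and the diameter bound $\|\vv^t-w^t\|\leq D_\setX$ from Assumption~\ref{aspt:constraint}. This produces the recursion $h_{t+1} \leq (1-\mu)h_t + \mu\frac{\beta D_\setX^2\delta}{t+2} + \frac{\mu^2\beta D_\setX^2}{2}$.

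Writing $C := \beta D_\setX^2$ and choosing the classical schedule $\mu = \frac{2}{t+2}\in[0,1]$, the recursion becomes $h_{t+1} \leq \frac{t}{t+2}h_t + \frac{2C(1+\delta)}{(t+2)^2}$, and I close it by induction. The $t=0$ step uses $\mu=1$, which annihilates $h_0$ entirely and gives $h_1 \leq \frac{C(1+\delta)}{2} \leq \frac{2C(1+\delta)}{3}$, establishing the hypothesis at $t=1$ without ever controlling the (possibly large) initial gap $h_0$. Assuming $h_t \leq \frac{2C(1+\delta)}{t+2}$, substitution gives $h_{t+1} \leq \frac{2C(1+\delta)(t+1)}{(t+2)^2}$, and the elementary estimate $(t+1)(t+3)\leq (t+2)^2$ upgrades this to $h_{t+1}\leq \frac{2C(1+\delta)}{t+3}$. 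Evaluating at $t=t_k$ (which satisfies $t_k\geq 1$ for every outer step where the bound is invoked) yields the right inequality.

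I expect the only genuinely delicate points to be bookkeeping rather than conceptual. The main one is verifying that the exact-line-search update is never worse than the comparison step $\mu=\frac{2}{t+2}$: this relies on the positivity of the quadratic's leading coefficient $\frac{\beta}{2}\|\vv^t-w^t\|^2$ and the $\min\{1,\cdot\}$ clip, and I would dispatch the degenerate case $\vv^t=w^t$ separately (there the gap is already null). The second subtlety is that the inexactness budget $\frac{\beta D_\setX^2\delta}{t+2}$ is deliberately matched to the $\mu=\frac{2}{t+2}$ schedule, so that the $\delta$-term enters the recursion at order $(t+2)^{-2}$ and collapses into the clean $(1+\delta)$ factor; it is this precise scaling that preserves the advertised rate. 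Everything else is the standard Frank--Wolfe argument of~\cite{jaggi2013revisiting} specialized to the strongly convex quadratic $H_k$.
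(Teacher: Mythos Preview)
Your proposal is correct and follows essentially the same route as the paper: the paper's own proof simply invokes Theorem~1 of \cite{jaggi2013revisiting} applied to $H_k$ together with the observation that the curvature constant is $C_{H_k}=\beta D_\setX^2$, while you have unpacked that theorem's proof (the descent recursion with comparison step $\mu=\tfrac{2}{t+2}$ and induction) for the specific strongly convex quadratic at hand. The left inequality, which the paper leaves implicit, is exactly the $\beta$-strong-convexity plus first-order-optimality argument you give.
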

\begin{proof}[Proof of Lemma \ref{lem:fw}]
The result is obtained by applying Theorem 1 in \cite{jaggi2013revisiting} to $H_k$ and noting that the curvature constant $C_{H_k} = \beta D_{\setX}^2, \forall k\geq0$.
\end{proof}

\section{Proof of Theorem \ref{thm:main-T-level}}\label{sec:proof-thm-T}
To establish the rate of convergence for Algorithm \ref{alg:CG-NASA-T-level} in Theorem \ref{thm:main-T-level}, we first present Lemma \ref{lem:basic-ineq-T-level} and Lemma \ref{lem:mse-uk-diff-order} regarding the basic recursion on the errors in estimating the inner function values and the order of $\E[\| \vu_{i}^{k+1} - \vu_{i}^{k}\|^2|\setF_k]$. The proofs follow \cite{balasubramanian2020stochastic} with minor modifications. We present the complete proofs below for the reader's convenience.  

\begin{lemma}\label{lem:basic-ineq-T-level}
Let $\{\vx^k\}_{k\geq 0}$ and $\{\vu_{i}^{k}\}_{k\geq 0}$ be generated by Algorithm \ref{alg:CG-NASA-T-level} and $\vu_{T+1} = \vx$. Define, $1\leq i\leq T$,
\begin{equation}\label{eq:definition-T-level-Delta-e}
\begin{split}
    \Delta_{\vG_i}^{k+1} &:= \vf_{i}(\vu_{i+1}^k) - \vG_{i}^{k+1},  \quad \Delta_{\vJ_i}^{k+1} := \nabla \vf_i(\vu_{i+1}^k) - \vJ_{i}^{k+1},\\
    \ve_{i}^{k} &:= \vf_{i}(\vu_{i+1}^{k+1}) - \vf_{i}(\vu_{i+1}^{k}) - \langle \nabla \vf_{i}(\vu_{i+1}^k), \vu_{i+1}^{k+1} - \vu_{i+1}^{k}\rangle.
\end{split}
\end{equation}
Under Assumption \ref{aspt:lipschitz-T-level}, we have, for $1\leq i\leq T$,
\begin{equation}\label{eq:lemma-T-level-recursion-function-values-error}
\begin{split}
    &\| \vf_{i} (\vu_{i+1}^{k+1}) - \vu_{i}^{k+1} \|^2 \leq (1-\tau_k) \| \vf_i(\vu_{i+1}^k) - \vu_{i}^{k} \|^2 + \tau_k^2 \| \Delta_{\vG_i}^{k+1} \|^2  + \dot{r}_{i}^{k+1}\\
    &\quad + \left[ 4 L_{\vf_i}^2  +  L_{\nabla \vf_i} \| \vf_i(\vu_{i+1}^k) - \vu_{i}^{k} \| + \| \Delta_{\vJ_i}^{k+1} \|^2\right]\| \vu_{i+1}^{k+1} - \vu_{i+1}^{k} \|^2,
\end{split}
\end{equation}
and
\begin{equation}\label{eq:lemma-T-level-function-values-diff}
\begin{split}
    \| \vu_{i}^{k+1} - \vu_{i}^{k} \|^2 \leq \tau_k^2 \left[2\| \vf_{i}(\vu_{i+1}^{k}) - \vu_{i}^{k}\|^2  + \| \Delta_{\vG_i}^{k+1}\|^2\right] + 2\|\vJ_{i}^{k+1}\|^2\| \vu_{i+1}^{k+1} - \vu_{i+1}^{k} \|^2 + \ddot{r}_{i}^{k+1}
\end{split}
\end{equation}
where
\begin{equation}\label{eq:defnition-T-level-dot-r}
    \begin{split}
        &\dot{r}_{i}^{k+1} := 2\tau_k \langle \Delta_{\vG_i}^{k+1}, \ve_{i}^{k} + (1-\tau_k) (\vf_{i}(\vu_{i+1}^{k}) - \vu_{i}^{k}) + {\Delta_{\vJ_i}^{k+1}}^{\top} (\vu_{i+1}^{k+1} - \vu_{i+1}^{k}) \rangle\\
        &\qquad\qquad + 2 \langle {\Delta_{\vJ_i}^{k+1}}^{\top}(\vu_{i+1}^{k+1} - \vu_{i+1}^{k}), \ve_{i}^{k} + (1-\tau_k) (\vf_i(\vu_{i+1}^{k}) - \vu_{i}^{k}) \rangle,\\
        &\ddot{r}_{i}^{k+1} := \tau_{k} \langle -\Delta_{\vG_i}^{k+1}, \tau_{k}(\vf_i(\vu_{i+1}^k) - \vu_{i}^{k}) + {\vJ_{i}^{k+1}}^{\top}(\vu_{i+1}^{k+1} - \vu_{i+1}^k) \rangle.
    \end{split}
\end{equation}
\end{lemma}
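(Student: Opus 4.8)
\textbf{Proof proposal for Lemma~\ref{lem:basic-ineq-T-level}.}

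The plan is to derive both recursions directly from the update rule~\eqref{eq:update-u-T-level} by adding and subtracting the deterministic quantities $\vf_i(\vu_{i+1}^{k+1})$ and $\vf_i(\vu_{i+1}^{k})$, and then expanding the squared norms. First I would rewrite the error $\vf_i(\vu_{i+1}^{k+1}) - \vu_i^{k+1}$ using~\eqref{eq:update-u-T-level}: since $\vu_i^{k+1} = (1-\tau_k)\vu_i^k + \tau_k \vG_i^{k+1} + \langle \vJ_i^{k+1}, \vu_{i+1}^{k+1}-\vu_{i+1}^k\rangle$, I subtract this from $\vf_i(\vu_{i+1}^{k+1})$ and split $\vf_i(\vu_{i+1}^{k+1}) = \vf_i(\vu_{i+1}^k) + \langle \nabla \vf_i(\vu_{i+1}^k), \vu_{i+1}^{k+1}-\vu_{i+1}^k\rangle + \ve_i^k$ using the definition of $\ve_i^k$ in~\eqref{eq:definition-T-level-Delta-e}. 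Collecting terms, the error becomes $(1-\tau_k)(\vf_i(\vu_{i+1}^k)-\vu_i^k) + \tau_k \Delta_{\vG_i}^{k+1} + \langle \Delta_{\vJ_i}^{k+1}, \vu_{i+1}^{k+1}-\vu_{i+1}^k\rangle + \ve_i^k$, where I have also used $\tau_k(\vf_i(\vu_{i+1}^k) - \vG_i^{k+1}) = \tau_k\Delta_{\vG_i}^{k+1}$ after regrouping the $\vf_i(\vu_{i+1}^k)$ contributions (one factor of $(1-\tau_k)$ stays, one factor $\tau_k$ combines with $\vG_i^{k+1}$).

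Next I would expand $\|\cdot\|^2$ of this four-term sum. The leading term contributes $(1-\tau_k)^2\|\vf_i(\vu_{i+1}^k)-\vu_i^k\|^2 \le (1-\tau_k)\|\vf_i(\vu_{i+1}^k)-\vu_i^k\|^2$; the cross terms involving $\Delta_{\vG_i}^{k+1}$ and $\langle\Delta_{\vJ_i}^{k+1},\cdot\rangle$ are gathered into $\dot r_i^{k+1}$ exactly as in~\eqref{eq:defnition-T-level-dot-r}. For the remaining pieces I would use Young's inequality and the Lipschitz bounds: $\|\ve_i^k\| \le \tfrac{L_{\nabla \vf_i}}{2}\|\vu_{i+1}^{k+1}-\vu_{i+1}^k\|^2$ (second-order Taylor remainder), $\|\langle\Delta_{\vJ_i}^{k+1},\vu_{i+1}^{k+1}-\vu_{i+1}^k\rangle\|^2 \le \|\Delta_{\vJ_i}^{k+1}\|^2\|\vu_{i+1}^{k+1}-\vu_{i+1}^k\|^2$, and a cross term between $\ve_i^k$ and the $(1-\tau_k)$ term handled by $2\langle a,b\rangle \le \|a\|\cdot(\text{something})$; the coefficient $4L_{\vf_i}^2$ in~\eqref{eq:lemma-T-level-recursion-function-values-error} should emerge from bounding $\|\ve_i^k\|^2$ and a cross term of $\ve_i^k$ with itself/others after using $\|\nabla\vf_i\|\le L_{\vf_i}$ and $\|\vu_{i+1}^{k+1}-\vu_{i+1}^k\|$ bounds. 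The second recursion~\eqref{eq:lemma-T-level-function-values-diff} is easier: from $\vu_i^{k+1}-\vu_i^k = -\tau_k(\vf_i(\vu_{i+1}^k)-\vu_i^k) + \tau_k\Delta_{\vG_i}^{k+1} + \langle\vJ_i^{k+1},\vu_{i+1}^{k+1}-\vu_{i+1}^k\rangle$ (again regrouping the $\tau_k(\vu_i^k - \vG_i^{k+1})$ term as $-\tau_k(\vf_i(\vu_{i+1}^k)-\vu_i^k)+\tau_k\Delta_{\vG_i}^{k+1}$), I expand the square, bound the first two terms together by $\tau_k^2(2\|\vf_i(\vu_{i+1}^k)-\vu_i^k\|^2 + \|\Delta_{\vG_i}^{k+1}\|^2)$ after pulling out a cross term into $\ddot r_i^{k+1}$, and bound the Jacobian term by $\|\vJ_i^{k+1}\|^2\|\vu_{i+1}^{k+1}-\vu_{i+1}^k\|^2$, where the extra factor $2$ absorbs the remaining cross terms via $\|a+b\|^2\le 2\|a\|^2+2\|b\|^2$ type splits.

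The main obstacle I anticipate is the careful bookkeeping of which cross terms get absorbed into the residuals $\dot r_i^{k+1}$, $\ddot r_i^{k+1}$ (these are exactly the terms that will vanish in conditional expectation, by Assumption~\ref{aspt:oracle-T-level}(a) and the independence in part (c)) versus which cross terms must be controlled deterministically by Young's inequality and folded into the coefficient multiplying $\|\vu_{i+1}^{k+1}-\vu_{i+1}^k\|^2$. In particular, the term $L_{\nabla\vf_i}\|\vf_i(\vu_{i+1}^k)-\vu_i^k\|$ appearing as a coefficient in~\eqref{eq:lemma-T-level-recursion-function-values-error} is slightly unusual — it is linear rather than quadratic in the function-value error — and must come from a cross term $2(1-\tau_k)\langle \vf_i(\vu_{i+1}^k)-\vu_i^k, \ve_i^k\rangle$ bounded by $2\|\vf_i(\vu_{i+1}^k)-\vu_i^k\|\cdot\tfrac{L_{\nabla\vf_i}}{2}\|\vu_{i+1}^{k+1}-\vu_{i+1}^k\|^2$, so I need to make sure the Taylor-remainder bound on $\ve_i^k$ is applied in the right place and that the $(1-\tau_k)\le 1$ simplification is used consistently. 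Everything else is routine; no deep idea is needed beyond the standard "expand, regroup into a clean recursion plus a mean-zero residual" technique, following~\cite{balasubramanian2020stochastic}.
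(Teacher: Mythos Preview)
Your proposal is correct and follows essentially the same route as the paper's proof: rewrite $\vf_i(\vu_{i+1}^{k+1})-\vu_i^{k+1}$ via the update rule and the Taylor remainder $\ve_i^k$, expand the square, isolate the mean-zero cross terms as $\dot r_i^{k+1}$, and bound the rest with the Lipschitz estimates $\|\ve_i^k\|\le \tfrac{L_{\nabla\vf_i}}{2}\|\vu_{i+1}^{k+1}-\vu_{i+1}^k\|^2$ and $\|\ve_i^k\|\le 2L_{\vf_i}\|\vu_{i+1}^{k+1}-\vu_{i+1}^k\|$. Two small clarifications: the coefficient $4L_{\vf_i}^2$ comes \emph{solely} from $\|\ve_i^k\|^2\le 4L_{\vf_i}^2\|\vu_{i+1}^{k+1}-\vu_{i+1}^k\|^2$ (no additional cross term is needed there), and in your decomposition for \eqref{eq:lemma-T-level-function-values-diff} the sign of the $\tau_k(\vf_i(\vu_{i+1}^k)-\vu_i^k)$ piece is flipped---since $\vu_i^{k+1}-\vu_i^k=\tau_k(\vG_i^{k+1}-\vu_i^k)+\langle\vJ_i^{k+1},\cdot\rangle$ and $\vG_i^{k+1}=\vf_i(\vu_{i+1}^k)-\Delta_{\vG_i}^{k+1}$, the correct split is $+\tau_k(\vf_i(\vu_{i+1}^k)-\vu_i^k)-\tau_k\Delta_{\vG_i}^{k+1}$---but this is immaterial once you square and collect terms.
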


\begin{proof}
We first prove part \eqref{eq:lemma-T-level-recursion-function-values-error}. By the definitions in \eqref{eq:definition-T-level-Delta-e}, \eqref{eq:defnition-T-level-dot-r}, for any $1\leq i\leq T$, we have
\begin{equation*}
    \begin{split}
        &\| \vf_i(\vu_{i+1}^{k+1}) - \vu_{i}^{k+1} \|^2\\
        =& \|\ve_{i}^{k} + \vf_{i}(\vu_{i+1}^{k}) + \nabla \vf_{i}(\vu_{i+1}^{k})^\top(\vu_{i+1}^{k+1} - \vu_{i+1}^{k}) - (1-\tau_k) \vu_{i}^{k} - \tau_k \vG_{i}^{k+1} - {\vJ_{i}^{k+1}}^\top(\vu_{i+1}^{k+1} - \vu_{i+1}^{k})\|^2\\
        =& \|\ve_{i}^{k} + {\Delta_{\vJ_i}^{k+1}}^\top (\vu_{i+1}^{k+1} - \vu_{i+1}^{k}) + (1-\tau_k) (\vf_{i}(\vu_{i+1}^{k}) - \vu_{i}^{k}) + \tau_k \Delta_{\vG_i}^{k+1} \|^2\\
        =& \|{\Delta_{\vJ_i}^{k+1}}^\top (\vu_{i+1}^{k+1} - \vu_{i+1}^{k}) \|^2 + \| \ve_{i}^{k}+(1-\tau_k) (\vf_{i}(\vu_{i+1}^{k}) - \vu_{i}^{k}) \|^2 + \tau_k^2 \| \Delta_{\vG_i}^{k+1} \|^2 + \dot{r}_{i}^{k+1}\\
        \leq&  \|  \ve_{i}^{k}+(1-\tau_k) (\vf_{i}(\vu_{i+1}^{k}) - \vu_{i}^{k}) \|^2 + \tau_k^2 \| \Delta_{\vG_i}^{k+1}\|^2 + \|\Delta_{\vJ_i}^{k+1}\|^2\|\vu_{i+1}^{k+1} - \vu_{i+1}^{k} \|^2 + \dot{r}_{i}^{k+1}\\
        \leq& (1-\tau_k) \| \vf_{i}(\vu_{i+1}^{k}) - \vu_{i}^k \|^2 + \| \ve_{i}^{k} \|^2 + 2(1-\tau_k) \| \ve_{i}^{k} \| \|\vf_{i}(\vu_{i+1}^k) - \vu_{i}^{k}\| + \tau_k^2 \| \Delta_{\vG_i}^{k+1}\|^2 \\
        &\qquad + \|\Delta_{\vJ_i}^{k+1}\|^2\|\vu_{i+1}^{k+1} - \vu_{i+1}^{k} \|^2 + \dot{r}_{i}^{k+1}.
    \end{split}
\end{equation*}
Furthermore, with Assumption \ref{aspt:lipschitz-T-level}, we have
\begin{equation}\label{eq:lemma-T-level-e-bound}
    \| \ve_{i}^k \| \leq \frac{L_{\nabla \vf_i}}{2}\| \vu_{i+1}^{k+1} - \vu_{i+1}^{k} \|^2 , \qquad \| \ve_{i}^{k}\|^2 \leq 4 L_{\vf_i}^2 \| \vu_{i+1}^{k+1} - \vu_{i+1}^{k} \|^2,
\end{equation}
which leads to \eqref{eq:lemma-T-level-recursion-function-values-error}. To show \eqref{eq:lemma-T-level-function-values-diff}, with the update rule given by \eqref{eq:update-u-T-level} and the definitions in \eqref{eq:definition-T-level-Delta-e}, we have, for $1\leq i\leq T$,
\begin{equation*}
\begin{split}
    &\| \vu_{i}^{k+1} - \vu_{i}^{k} \|^2\\
    =& \| \tau_k (\vG_{i}^{k+1} - \vu_{i}^{k}) + \langle \vJ_{i}^{k+1}, \vu_{i+1}^{k+1} - \vu_{i+1}^{k}\rangle \|^2\\
    =& \tau_k^2 \| \vG_{i}^{k+1} - \vu_i^{k} \|^2 + \| {\vJ_{i}^{k+1}}^\top (\vu_{i+1}^{k+1} - \vu_{i+1}^{k}) \|^2  + 2\tau_k \langle \vG_{i}^{k+1} - \vu_i^{k}, {\vJ_{i}^{k+1}}^\top(\vu_{i+1}^{k+1} - \vu_{i+1}^{k}) \rangle\\
    =& \tau_k^2 \| \vG_{i}^{k+1} - \vu_i^{k} \|^2 + \| {\vJ_{i}^{k+1}}^\top(\vu_{i+1}^{k+1} - \vu_{i+1}^{k}) \|^2 + 2\tau_k\langle \vf_i(\vu_{i+1}^{k}) - \vu_{i}^{k}, {\vJ_{i}^{k+1}}^\top(\vu_{i+1}^{k+1} - \vu_{i+1}^{k}) \rangle \\
    &\qquad + 2\tau_k\langle -\Delta_{\vG_i}^{k+1}, {\vJ_{i}^{k+1}}^\top(\vu_{i+1}^{k+1} - \vu_{i+1}^{k}) \rangle\\
    \leq&\tau_k^2 \| \vG_{i}^{k+1} - \vu_i^{k} \|^2 + 2\| \vJ_{i}^{k+1}\|^2\|\vu_{i+1}^{k+1} - \vu_{i+1}^{k} \|^2 + \tau_k^2 \| \vf_i(\vu_{i+1}^{k}) - \vu_{i}^{k}\|^2\\
     &\qquad + 2\tau_k\langle -\Delta_{\vG_i}^{k+1}, {\vJ_{i}^{k+1}}^\top(\vu_{i+1}^{k+1} - \vu_{i+1}^{k}) \rangle\\
    =& 2\tau_k^2 \| \vf_i(\vu_{i+1}^k)  - \vu_{i}^{k}\|^2 + \tau_{k}^2 \| \Delta_{\vG_i}^{k+1}\|^2 + 2 \|\vJ_{i}^{k+1}\|^2 \| \vu_{i+1}^{k+1} - \vu_{i+1}^{k} \|^2\\
    &\qquad + 2\tau_k \langle -\Delta_{\vG_i}^{k+1}, \tau_k(\vf_i(\vu_{i+1}^k) - \vu_{i}^{k}) + {\vJ_{i}^{k+1}}^\top(\vu_{i+1}^{k+1} - \vu_{i+1}^k)\rangle.
\end{split}
\end{equation*}
where the inequality comes from the fact that $\| {\vJ_{i}^{k+1}}^\top(\vu_{i+1}^{k+1} - \vu_{i+1}^{k}) \|^2\leq \| \vJ_{i}^{k+1}\|^2\|\vu_{i+1}^{k+1} - \vu_{i+1}^{k}\|^2$ and $2\tau_k\langle \vf_i(\vu_{i+1}^{k}) - \vu_{i}^{k}, {\vJ_{i}^{k+1}}^\top(\vu_{i+1}^{k+1} - \vu_{i+1}^{k}) \rangle\leq \| {\vJ_{i}^{k+1}}^\top(\vu_{i+1}^{k+1} - \vu_{i+1}^{k})\|^2 + \tau_k^2 \| \vf_i(\vu_{i+1}^{k}) - \vu_{i}^{k}\|^2$.
\end{proof}

\begin{lemma}\label{lem:mse-uk-diff-order}
Let $\vu_{T+1} = \vx$. Under Assumption \ref{aspt:lipschitz-T-level}, \ref{aspt:oracle-T-level}, and with the choice of $\tau_0=1$, we have, for $1\leq i\leq T$ and $k\geq 0$,
\begin{align}
    &\E[\| \vf_i(\vu_{i+1}^{k+1}) - \vu_{i}^{k+1} \|^2|\setF_k] \leq \sigma_{\vG_i}^2 + (4L_{\vf_i}^2 + \sigma_{\vJ_i}^2) c_{i+1},\label{eq:lemma-function-value-error}\\
    &\E[\| \vu_{i}^{k+1} - \vu_{i}^{k}\|^2|\setF_k]\leq c_i \tau_k^2,\label{eq:lemma-function-value-diff}
\end{align}
where
\begin{equation}
    c_i := 3\sigma_{\vG_i}^2 + 2(4L_{\vf_i}^2 + \sigma_{\vJ_i}^2 + \hat{\sigma}_{\vJ_i}^2) c_{i+1}, \qquad c_{T+1} = D_{\setX}^2.
\end{equation}

\end{lemma}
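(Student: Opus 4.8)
\textbf{Proof proposal for Lemma~\ref{lem:mse-uk-diff-order}.}

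The plan is to prove the two bounds \eqref{eq:lemma-function-value-error} and \eqref{eq:lemma-function-value-diff} simultaneously, by a \emph{downward induction on the level index $i$}, running from $i=T$ down to $i=1$; the base case is handled by the boundary convention $c_{T+1}=D_\setX^2$ together with Assumption~\ref{aspt:constraint}, which gives $\|\vu_{T+1}^{k+1}-\vu_{T+1}^k\|=\|\vx^{k+1}-\vx^k\|=\tau_k\|\tilde\vy^k-\vx^k\|\le\tau_k D_\setX$, i.e. $\E[\|\vu_{T+1}^{k+1}-\vu_{T+1}^k\|^2\mid\setF_k]\le D_\setX^2\tau_k^2=c_{T+1}\tau_k^2$. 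So the inductive hypothesis at level $i+1$ will be exactly \eqref{eq:lemma-function-value-diff} with constant $c_{i+1}$, and I want to propagate it to level $i$.

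First I would take conditional expectation given $\setF_k$ in the two deterministic recursions \eqref{eq:lemma-T-level-recursion-function-values-error} and \eqref{eq:lemma-T-level-function-values-diff} from Lemma~\ref{lem:basic-ineq-T-level}. The residual terms $\dot r_i^{k+1}$ and $\ddot r_i^{k+1}$ are built out of inner products with $\Delta_{\vG_i}^{k+1}$ and $\Delta_{\vJ_i}^{k+1}$; using Assumption~\ref{aspt:oracle-T-level}(a) (unbiasedness), Assumption~\ref{aspt:oracle-T-level}(c) (independence of $\vG_i^{k+1}$ and $\vJ_i^{k+1}$), and the fact that $\ve_i^k$, $\vu_{i+1}^{k+1}-\vu_{i+1}^k$, $\vf_i(\vu_{i+1}^k)-\vu_i^k$ are all $\setF_k$-measurable once we also condition on $\vu_{i+1}^{k+1}$ appropriately (or by peeling the levels in the right order using part (c)), each of these inner-product terms has zero conditional mean, so $\E[\dot r_i^{k+1}\mid\setF_k]=\E[\ddot r_i^{k+1}\mid\setF_k]=0$. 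Then \eqref{eq:lemma-T-level-function-values-diff} becomes, after also using $\E[\|\Delta_{\vG_i}^{k+1}\|^2\mid\setF_k]\le\sigma_{\vG_i}^2$ (part (b)), $\E[\|\vJ_i^{k+1}\|^2\mid\setF_k]\le\hat\sigma_{\vJ_i}^2$ (the Remark after Assumption~\ref{aspt:oracle-T-level}), and the inductive hypothesis $\E[\|\vu_{i+1}^{k+1}-\vu_{i+1}^k\|^2\mid\setF_k]\le c_{i+1}\tau_k^2\le c_{i+1}$ (since $\tau_k\le1$), a bound of the form $\E[\|\vu_i^{k+1}-\vu_i^k\|^2\mid\setF_k]\le\tau_k^2\big(2\E[\|\vf_i(\vu_{i+1}^k)-\vu_i^k\|^2\mid\setF_k]+\sigma_{\vG_i}^2+2\hat\sigma_{\vJ_i}^2 c_{i+1}\big)$. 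To close this I still need a bound on $\E[\|\vf_i(\vu_{i+1}^k)-\vu_i^k\|^2\mid\setF_k]$, which is supplied by \eqref{eq:lemma-function-value-error} at the same level $i$ — so \eqref{eq:lemma-function-value-error} must be established first within the induction step.

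For \eqref{eq:lemma-function-value-error} I would argue by an inner induction on $k$ (for fixed $i$): at $k=0$ the choice $\tau_0=1$ collapses the recursion \eqref{eq:lemma-T-level-recursion-function-values-error} so that $\vf_i(\vu_{i+1}^1)-\vu_i^1$ loses its $(1-\tau_0)$ memory term, and taking conditional expectation together with \eqref{eq:lemma-T-level-e-bound} (bounding $\|\ve_i^0\|^2\le4L_{\vf_i}^2\|\vu_{i+1}^1-\vu_{i+1}^0\|^2$ and dropping the $L_{\nabla\vf_i}\|\cdot\|$ cross term after noting the memory term vanishes), plus $\E[\|\Delta_{\vJ_i}^{1}\|^2\mid\setF_0]\le\sigma_{\vJ_i}^2$ and the level-$(i+1)$ hypothesis $\E[\|\vu_{i+1}^1-\vu_{i+1}^0\|^2\mid\setF_0]\le c_{i+1}$, yields exactly $\E[\|\vf_i(\vu_{i+1}^1)-\vu_i^1\|^2\mid\setF_0]\le\sigma_{\vG_i}^2+(4L_{\vf_i}^2+\sigma_{\vJ_i}^2)c_{i+1}$. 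For the induction step $k\to k+1$, I take conditional expectation in \eqref{eq:lemma-T-level-recursion-function-values-error}, use $\|\ve_i^k\|^2\le4L_{\vf_i}^2\|\vu_{i+1}^{k+1}-\vu_{i+1}^k\|^2$, drop the awkward $L_{\nabla\vf_i}\|\vf_i(\vu_{i+1}^k)-\vu_i^k\|\cdot\|\vu_{i+1}^{k+1}-\vu_{i+1}^k\|^2$ term by an argument that the $(1-\tau_k)$-contraction plus the $\tau_k^2$-scale of $\|\vu_{i+1}^{k+1}-\vu_{i+1}^k\|^2$ keeps everything inside the claimed ceiling (this is where one must be a little careful), and then invoke the outer induction hypothesis at step $k$ plus the level-$(i+1)$ diff bound. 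The contraction $(1-\tau_k)\cdot(\text{ceiling})+\tau_k\cdot(\text{something}\le\text{ceiling})\le\text{ceiling}$ is what makes the constant stabilize at $\sigma_{\vG_i}^2+(4L_{\vf_i}^2+\sigma_{\vJ_i}^2)c_{i+1}$.

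The main obstacle I anticipate is handling the cross term $L_{\nabla\vf_i}\|\vf_i(\vu_{i+1}^k)-\vu_i^k\|\,\|\vu_{i+1}^{k+1}-\vu_{i+1}^k\|^2$ in \eqref{eq:lemma-T-level-recursion-function-values-error} cleanly: naively it couples the (already-bounded) function-value error with the squared step, and one needs to verify that this product is genuinely lower order — here the key leverage is that $\E[\|\vu_{i+1}^{k+1}-\vu_{i+1}^k\|^2\mid\setF_k]\le c_{i+1}\tau_k^2$ carries a $\tau_k^2$ factor, and $\|\vf_i(\vu_{i+1}^k)-\vu_i^k\|$ is $\le$ something bounded by the inductive hypothesis, so the whole cross term contributes only $O(\tau_k^2)$, which is absorbed; but the factor definitions \eqref{eq:defnition-T-level-dot-r} and the exact constant $c_i=3\sigma_{\vG_i}^2+2(4L_{\vf_i}^2+\sigma_{\vJ_i}^2+\hat\sigma_{\vJ_i}^2)c_{i+1}$ suggest the authors fold a factor-$2$ Young step and the $3\sigma_{\vG_i}^2$ (rather than $\sigma_{\vG_i}^2$) precisely to swallow these lower-order contributions, so I would follow that bookkeeping. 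A secondary subtlety is justifying $\E[\dot r_i^{k+1}\mid\setF_k]=0$ when $\vu_{i+1}^{k+1}$ itself depends on the level-$(i+1)$ oracle outputs: this is exactly why Assumption~\ref{aspt:oracle-T-level}(c) decouples levels, and I would make the conditioning order (condition on $\setF_k$, then reveal levels $T,T-1,\dots,i+1$, then level $i$) explicit.
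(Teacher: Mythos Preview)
Your overall architecture is right and matches the paper: backward induction on the level index $i$ with base case $\|\vu_{T+1}^{k+1}-\vu_{T+1}^k\|\le\tau_k D_\setX$, and the derivation of \eqref{eq:lemma-function-value-diff} from \eqref{eq:lemma-T-level-function-values-diff} plus the already-established bound \eqref{eq:lemma-function-value-error} is exactly what the paper does (that is precisely where the $3\sigma_{\vG_i}^2$ and $2\hat\sigma_{\vJ_i}^2 c_{i+1}$ come from). The conditioning subtlety you flag is also handled the way you describe, via Assumption~\ref{aspt:oracle-T-level}(c).

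The gap is in your route to \eqref{eq:lemma-function-value-error}. You propose to take conditional expectation in the recursion \eqref{eq:lemma-T-level-recursion-function-values-error} and then ``absorb'' the cross term $L_{\nabla\vf_i}\|\vf_i(\vu_{i+1}^k)-\vu_i^k\|\cdot\|\vu_{i+1}^{k+1}-\vu_{i+1}^k\|^2$ using the $\tau_k^2$ scale. But carrying this through gives
\[
\E[A_{k+1}\mid\setF_k]\le(1-\tau_k)A_k+\tau_k^2\big[\sigma_{\vG_i}^2+(4L_{\vf_i}^2+\sigma_{\vJ_i}^2)c_{i+1}\big]+\tau_k^2 L_{\nabla\vf_i}c_{i+1}\sqrt{A_k},
\]
where $A_k=\|\vf_i(\vu_{i+1}^k)-\vu_i^k\|^2$. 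The extra $L_{\nabla\vf_i}$ term is not lower order in the sense you need: to stay below the claimed ceiling $C=\sigma_{\vG_i}^2+(4L_{\vf_i}^2+\sigma_{\vJ_i}^2)c_{i+1}$ you would need $\tau_k L_{\nabla\vf_i}c_{i+1}\sqrt{C}\le(1-\tau_k)C$, which fails for $\tau_0=1$ and in any case the stated constant has no $L_{\nabla\vf_i}$ in it. So this route cannot deliver the exact bound in the lemma.

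The paper avoids this entirely by \emph{not} using \eqref{eq:lemma-T-level-recursion-function-values-error}. Instead it writes the exact identity
\[
\vf_i(\vu_{i+1}^{k+1})-\vu_i^{k+1}=(1-\tau_k)\big(\vf_i(\vu_{i+1}^k)-\vu_i^k\big)+\mathbf{D}_{k,i},\qquad \mathbf{D}_{k,i}:=\ve_i^k+\tau_k\Delta_{\vG_i}^{k+1}+{\Delta_{\vJ_i}^{k+1}}^\top(\vu_{i+1}^{k+1}-\vu_{i+1}^k),
\]
applies convexity of $\|\cdot\|^2$ to get $\|\cdot\|^2\le(1-\tau_k)A_k+\tau_k^{-1}\|\mathbf{D}_{k,i}\|^2$, and then bounds $\E[\|\mathbf{D}_{k,i}\|^2\mid\setF_k]$ by expanding the square: the three summands are conditionally orthogonal, and for $\|\ve_i^k\|^2$ one uses only the Lipschitz bound $\|\ve_i^k\|^2\le4L_{\vf_i}^2\|\vu_{i+1}^{k+1}-\vu_{i+1}^k\|^2$ (never the smoothness bound that produces $L_{\nabla\vf_i}$). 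This yields $\E[\|\mathbf{D}_{k,i}\|^2\mid\setF_k]\le\tau_k^2 C$, hence $\E[A_{k+1}\mid\setF_k]\le(1-\tau_k)A_k+\tau_k C$, and with $\tau_0=1$ the induction on $k$ closes with exactly the constant $C$. The point is that the cross term you worried about never appears because the paper groups differently and uses only the first-order Lipschitz estimate on $\ve_i^k$.
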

\begin{proof}
By the update rule given in \eqref{eq:update-u-T-level} and the definitions in \eqref{eq:definition-T-level-Delta-e}, for $1\leq i \leq T$ and $k\geq 0$, we have
\begin{equation*}
    \vf_i(\vu_{i+1}^{k+1}) - \vu_{i}^{k+1} = (1-\tau_k) (\vf_{i}(\vu_{i+1}^k) - \vu_{i}^k) + \mathbf{D}_{k,i},
\end{equation*}
where $\mathbf{D}_{k, i} := \ve_{i}^{k} + \tau_k \Delta_{\vG_i}^{k+1} + {\Delta_{\vJ_i}^{k+1}}^\top (\vu_{i+1}^{k+1} - \vu_{i+1}^{k})$. With the convexity of $\| \cdot \|^2$, we can further obtain
\begin{equation}\label{eq:lemma-function-value-error-bound}
    \|\vf_i(\vu_{i+1}^{k+1}) - \vu_{i}^{k+1}\|^2 \leq (1-\tau_k) \|  \vf_{i}(\vu_{i+1}^k) - \vu_{i}^k\|^2 +\frac{1}{\tau_k} \| \mathbf{D}_{k,i} \|^2,\quad \forall k\geq 0.
\end{equation}
Moreover, under Assumption \ref{aspt:oracle-T-level}, we have, for $1\leq i \leq T$ and $k\geq 0$,
\begin{equation}\label{eq:lemma-Dki-bound}
\begin{split}
    \E[\| \mathbf{D}_{k,i} \|^2 | \setF_k] &= \E[\|\ve_{i}^{k}\|^2 | \setF_k] + \tau_k^2 \E [\|\Delta_{\vG_i}^{k+1}\|^2|\setF_k] + \E[\|{\Delta_{\vJ_i}^{k+1}}^\top(\vu_{i+1}^{k+1}-\vu_{i+1}^k)\|^2|\setF_k]\\
    &\leq \tau_k^2  \E [\|\Delta_{\vG_i}^{k+1}\|^2|\setF_k] + \left(4 L_{\vf_i}^2 + \E[\| \Delta_{\vJ_i}^{k+1} \|^2|\setF_k]\right) \E[\|\vu_{i+1}^{k+1} - \vu_{i+1}^{k} \|^2|\setF_k]\\
    &\leq \tau_k^2  \sigma_{\vG_i}^2 + \left(4 L_{\vf_i}^2 + \sigma_{\vJ_i}^2\right) \E[\|\vu_{i+1}^{k+1} - \vu_{i+1}^{k} \|^2|\setF_k].
\end{split}
\end{equation}
where the second inequality follows from \eqref{eq:lemma-T-level-e-bound}. Setting $i=T$ in the inequality above and noting that $\vu_{T+1}^k = \vx^k$, we have
\begin{equation*}
    \E[\| \mathbf{D}_{k,T}\|^2|\setF_k] \leq \tau_k^2\left[\sigma_{\vG_T}^2 + (4L_{\vf_T}^2 +\sigma_{\vJ_T}^2) D_{\setX}^2 \right], \quad \forall k\geq 0.
\end{equation*}
Thus, with the choice of $\tau_0=1$, we obtain
\begin{equation*}
    \E[\|\vf_{T}(\vx^{k}) - \vu_{T}^{k}\|^2|\setF_k] \leq  \sigma_{\vG_T}^2 + (4L_{\vf_T}^2 +\sigma_{\vJ_T}^2) D_\setX^2,\quad \forall k \geq 1.
\end{equation*}
Taking expectation of both sides of \eqref{eq:lemma-T-level-function-values-diff} conditioning on $\setF_k$, and under Assumption \ref{aspt:oracle-T-level}, we obtain
\begin{equation}\label{eq:lemma-function-value-diff-bound}
\begin{split}
     \E[\| \vu_{i}^{k+1} - \vu_{i}^{k} \|^2|\setF_k] \leq \tau_k^2 \E\left[2\| \vf_{i}(\vx^{k}) - \vu_{i}^{k}\|^2  + \| \Delta_{\vG_i}^{k+1}\|^2+\frac{2}{\tau_k^2}\|\vJ_{i}^{k+1}\|^2\| \vu_{i+1}^{k+1} - \vu_{i+1}^{k} \|^2\middle|\setF_k\right].
\end{split}
\end{equation}
Setting $i = T$ in the inequality above, we have
\begin{equation*}
\begin{split}
     \E[\| \vu_{T}^{k+1} - \vu_{T}^{k} \|^2|\setF_k] \leq \tau_k^2 \left[3\sigma_{\vG_T}^2 + 2(4L_{\vf_T}^2 +\sigma_{\vJ_T}^2 + \hat{\sigma}_{\vJ_T}^2) D_\setX^2 \right], \quad \forall k\geq 1.
\end{split}
\end{equation*}
This completes the proof of \eqref{eq:lemma-function-value-error} and \eqref{eq:lemma-function-value-diff} when $i = T$. We now use backward induction to complete
the proof. By the above result, the base case of $i = T$ holds. Assume that \eqref{eq:lemma-function-value-diff} hold when $i = j$ for some $1< j\leq T$, i.e., $\E[\| \vu_{j}^{k+1} - \vu_{j}^{k}\|^2|\setF_k]\leq c_j \tau_k^2, \forall k \geq 0$. Then, setting $i=j-1$ in \eqref{eq:lemma-Dki-bound}, we obtain
\begin{equation*}
    \E[\|\mathbf{D}_{k,j-1} \|^2|\setF_k] \leq \tau_k^2\left[\sigma_{\vG_{j-1}}^2 + (4 L_{\vf_{j-1}}^2 + \sigma_{\vJ_{j-1}}^2)c_{j}\right], \quad \forall k\geq 0.
\end{equation*}
Furthermore, with \eqref{eq:lemma-function-value-error-bound} and the choice of $\tau_0=1$, we have
\begin{equation*}
    \E[\|\vf_{j-1}(\vu_{j}^{k+1}) - \vu_{j-1}^{k+1}\|^2 |\setF_k]\leq\sigma_{\vG_{j-1}}^2 + (4 L_{\vf_{j-1}}^2 + \sigma_{\vJ_{j-1}}^2)c_{j}, \quad \forall k\geq 0.
\end{equation*}
which together with \eqref{eq:lemma-function-value-diff-bound}, imply that
\begin{equation*}
    \E[\| \vu_{j-1}^{k+1} - \vu_{j-1}^{k}\|^2|\setF_k]\leq c_{j-1} \tau_k^2, \quad \forall k\geq 0.
\end{equation*}
\end{proof}

We now leverage the merit function defined in \eqref{eq:definition-merit-fucntion-T-level} and provide a basic inequality for establishing convergence analysis of Algorithm \ref{alg:CG-NASA-T-level} in Lemma~\ref{lem:merit-function-T-level}. In Proposition~\ref{prop:Rk}, we show the boundedness of the term $\mathbf{R}_k$ appearing on the right hand side of \eqref{eq:lemma-merit-function-telescoping-T-level} in expectation. These two results form the crucial steps in establishing the convergence analysis of Algorithm~\ref{alg:CG-NASA-T-level}.

\begin{lemma}
\label{lem:merit-function-T-level}
Let $\{\vx^k, \vz^k, \vu^k\}_{k\geq 0}$ be the sequence generated by Algorithm \ref{alg:CG-NASA-T-level}, the merit function $W_{\alpha,\gamma}(\cdot, \cdot, \cdot)$ be defined in \eqref{eq:definition-merit-fucntion-T-level} with positive constants $\{\alpha, \{\gamma_i\}_{1\leq i\leq T}\}$, and $\vu_{T+1} = \vx$. Under Assumption \ref{aspt:lipschitz-T-level}, for any $\beta > 0$, let
\begin{equation*}
    \beta_k \equiv \beta, \quad\alpha = \frac{\beta}{20 L_{\nabla F}^2},\quad \gamma_1 = \frac{\beta}{2} ,\quad \gamma_j =  \left(2\alpha + \frac{1}{4\alpha L_{\nabla F}^2}\right) (T-1)C_j^2 + \frac{\beta}{2},  \quad 2\leq j\leq T,
\end{equation*}
where $C_j$'s are defined in \eqref{eq:definition-Cj}. Then, $\forall N \geq 0$
\begin{equation}\label{eq:lemma-merit-function-telescoping-T-level}
\begin{split}
   \sum_{k=0}^{N}\tau_k \left( \beta \left[\| \vd^k \|^2  + \sum_{i=1}^{T} \| \vf_{i}(\vu_{i+1}^k) - \vu_{i}^k \|^2  \right]  +\frac{\beta}{20 L_{\nabla F}^2}\| \nabla F(\vx^k) -\vz^k\|^2 \right)\\
   \leq 2W_0 + 2\sum_{k=0}^{N} \mathbf{R}_k + \left(\frac{24}{5} + \frac{40L_{\nabla F}^2}{\beta^2}\right)\sum_{k=0}^{N}\tau_k\left( H_k(\tilde{\vy}^k) - H_k(\vy^k)\right),
\end{split}
\end{equation}
where $\vd^k :=  \vy^{k} - \vx^k$, $H_k(\cdot), \vy^k$ are defined in \eqref{eq:definition-etak-yk}, and
\begin{equation}\label{eq:lemma-definition-Rk-T-level}
\begin{split}
    \mathbf{R}_k :=& \sum_{i=1}^{T} \gamma_i  \left[ 4 L_{\vf_i}^2  +  L_{\nabla \vf_i} \| \vf_i(\vu_{i+1}^k) - \vu_{i}^{k} \| + \| \Delta_{\vJ_i}^{k+1} \|^2\right]\| \vu_{i+1}^{k+1} - \vu_{i+1}^{k} \|^2\\
   &+\tau_k^2\left[\frac{L_{\nabla F}+ L_{\nabla\eta}}{2} D_{\setX}^2  + \sum_{i=1}^{T}\gamma_i \| \Delta_{\vG_{i}}^{k+1} \|^2 + \alpha \| \Delta^{k+1} \|^2\right] \\
    &+ \tau_k \left[ \langle \vd^k , \Delta^{k+1}\rangle + \sum_{i=1}^{T}\gamma_i \dot{r}_{i}^{k+1} + 2\alpha \dddot{r}^{k+1}\right]+ \frac{L_{\nabla \eta}}{2} \| \vz^{k+1} - \vz^k \|^2,\\
    \Delta^{k+1}:= & \prod_{i=1}^{T} \nabla \vf_{T+1-i}(\vu_{T+2-i}^k) - \prod_{i=1}^{T}\vJ^{k+1}_{T-i+1},\\
    \dddot{r}^{k+1}:=&\langle \Delta^{k+1}, (1-\tau_{k}) [ \nabla F(\vx^{k}) - \vz^{k}] + \tau_{k}[\nabla F(\vx^k) -  \prod_{i=1}^{T} \nabla \vf_{T+1-i}(\vu_{T+2-i}^k)] \\
    &+  \nabla F(\vx^{k+1}) - \nabla F(\vx^k)\rangle,
\end{split}
\end{equation}
$\Delta_{\vG_i}^{k+1}$, $\Delta_{\vJ_i}^{k+1}$ are defined in \eqref{eq:definition-T-level-Delta-e}, and $\dot{r}_i^{k+1}$ is defined in \eqref{eq:defnition-T-level-dot-r}.
\end{lemma}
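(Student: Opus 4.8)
\textbf{Proof plan for Lemma~\ref{lem:merit-function-T-level}.}
The plan is to track the one-step change $W_{k+1}-W_k$ of the merit function term-by-term, controlling each of the four contributions in \eqref{eq:definition-merit-fucntion-T-level} separately, and then to telescope. First I would analyze the ``optimization progress'' part $[F(\vx^{k+1})-F^\star] - \eta(\vx^{k+1},\vz^{k+1})$. For the $F$ term I use $L_{\nabla F}$-smoothness (Lemma~\ref{lem:F-smooth}a) together with the update $\vx^{k+1}=\vx^k+\tau_k(\tilde\vy^k-\vx^k)$, so that $F(\vx^{k+1})-F(\vx^k)\le \tau_k\langle\nabla F(\vx^k),\tilde\vy^k-\vx^k\rangle+\tfrac{L_{\nabla F}}{2}\tau_k^2\|\tilde\vy^k-\vx^k\|^2$, and bound $\|\tilde\vy^k-\vx^k\|^2\le D_\setX^2$ by Assumption~\ref{aspt:constraint}. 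For the $\eta$ term I use the $L_{\nabla\eta}$-smoothness of $\eta(\cdot,\cdot)$ (Lemma~\ref{lem:eta-smooth}) jointly in $(\vx,\vz)$, expanding along the increments $\vx^{k+1}-\vx^k=\tau_k\vd^k$ (after replacing $\tilde\vy^k$ by $\vy^k$ up to the \texttt{ICG} error, which is exactly where the $H_k(\tilde\vy^k)-H_k(\vy^k)$ slack enters) and $\vz^{k+1}-\vz^k$; I also invoke Danskin-type identities $\nabla_\vx\eta(\vx,\vz)=\vz+\beta(\vy(\vx,\vz)-\vx)$ and $\nabla_\vz\eta(\vx,\vz)=\vy(\vx,\vz)-\vx=\vd$, and the key contraction $\langle\vz^k,\vd^k\rangle+\tfrac{\beta}{2}\|\vd^k\|^2=\eta(\vx^k,\vz^k)\le-\tfrac{\beta}{2}\|\vd^k\|^2$, which produces the negative $-\tfrac{\tau_k\beta}{2}\|\vd^k\|^2$ terms we want. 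The cross terms involving $\langle\nabla F(\vx^k)-\vz^k,\vd^k\rangle$ are kept and later absorbed.

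Second I would handle the $\alpha\|\nabla F(\vx^k)-\vz^k\|^2$ term. Writing $\vz^{k+1}-\vz^k=\tau_k(\prod_i\vJ_{T+1-i}^{k+1}-\vz^k)$ and $\nabla F(\vx^{k+1})-\vz^{k+1}=(1-\tau_k)(\nabla F(\vx^k)-\vz^k)+\tau_k[\nabla F(\vx^k)-\prod_i\nabla\vf_{T+1-i}(\vu_{T+2-i}^k)]+[\nabla F(\vx^{k+1})-\nabla F(\vx^k)]+\Delta^{k+1}$, I expand the square, using $\|a+b\|^2\le(1-\tau_k)\|a\|^2/(1-\tau_k)\cdot(1-\tau_k)+\dots$ — more precisely the Young/Jensen split with weights $(1-\tau_k)$ and $\tau_k$ — to get a contraction factor $(1-\tau_k)$ on $\|\nabla F(\vx^k)-\vz^k\|^2$ plus a term $\tfrac{1}{\tau_k}$ times the squared remainder. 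The squared remainder splits into (i) the bias $\|\nabla F(\vx^k)-\prod_i\nabla\vf_{T+1-i}(\vu_{T+2-i}^k)\|^2$, which by Lemma~\ref{lem:F-smooth}b is $\le (T-1)\sum_{j=2}^T C_j^2\|\vf_j(\vu_{j+1}^k)-\vu_j^k\|^2$ (Cauchy--Schwarz to pull the sum out), (ii) the smoothness error $\|\nabla F(\vx^{k+1})-\nabla F(\vx^k)\|^2\le L_{\nabla F}^2\tau_k^2\|\vd^k\|^2$ (again up to the \texttt{ICG} slack), and (iii) the martingale noise $\|\Delta^{k+1}\|^2$ plus the cross term $\dddot r^{k+1}$, which are routed into $\mathbf{R}_k$. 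This is the step that forces the precise value $\alpha=\beta/(20L_{\nabla F}^2)$: the coefficient in front of $\|\vd^k\|^2$ coming from (ii) must be dominated by the negative $\tfrac{\beta}{2}\|\vd^k\|^2$ harvested in step one, and similarly the $(T-1)C_j^2$ bias pieces must be absorbable into the $\gamma_j$ terms, which dictates $\gamma_j=(2\alpha+\tfrac{1}{4\alpha L_{\nabla F}^2})(T-1)C_j^2+\tfrac{\beta}{2}$.

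Third I would handle the inner-function-value terms $\sum_i\gamma_i\|\vf_i(\vu_{i+1}^k)-\vu_i^k\|^2$ using directly the recursion \eqref{eq:lemma-T-level-recursion-function-values-error} from Lemma~\ref{lem:basic-ineq-T-level}: each gives a contraction $(1-\tau_k)\|\vf_i(\vu_{i+1}^k)-\vu_i^k\|^2$, a noise term $\tau_k^2\|\Delta_{\vG_i}^{k+1}\|^2$, the martingale-ish $\dot r_i^{k+1}$, and a factor $[4L_{\vf_i}^2+L_{\nabla\vf_i}\|\vf_i(\vu_{i+1}^k)-\vu_i^k\|+\|\Delta_{\vJ_i}^{k+1}\|^2]\|\vu_{i+1}^{k+1}-\vu_{i+1}^k\|^2$ that goes verbatim into $\mathbf{R}_k$. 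Then I collect: the $(1-\tau_k)$-contractions against the full current values produce, after rearranging $W_{k+1}-W_k\le -\tau_k(\text{good terms})+\mathbf{R}_k+(\text{slack})\cdot(H_k(\tilde\vy^k)-H_k(\vy^k))$, exactly the one-step bound sketched in Section~\ref{sec:sketch}; summing over $k=0,\dots,N$ telescopes the $W$ terms (noting $W_{N+1}\ge0$ since $F\ge F^\star$, $\eta\le 0$, and the squared terms are nonnegative — so $W_{N+1}\ge0$ can be dropped) to give $\sum_k\tau_k(\text{good})\le W_0 + \sum_k\mathbf{R}_k + \text{slack}\sum_k\tau_k(H_k(\tilde\vy^k)-H_k(\vy^k))$, and multiplying through by $2$ yields \eqref{eq:lemma-merit-function-telescoping-T-level}. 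I expect the main obstacle to be the bookkeeping in step two: getting all the cross terms $\langle\nabla F(\vx^k)-\vz^k,\vd^k\rangle$, the \texttt{ICG} replacement errors, and the $L_{\nabla F}^2$ vs.\ $\beta$ vs.\ $\alpha$ coefficients to balance simultaneously, so that the residual collects cleanly into the stated $\mathbf{R}_k$ while leaving the ``good'' negative terms with the advertised constants $\beta$, $\beta$, and $\beta/(20L_{\nabla F}^2)$ and the slack coefficient $(\tfrac{24}{5}+\tfrac{40L_{\nabla F}^2}{\beta^2})$ — this requires choosing all the Young's-inequality weights consistently, which is the delicate part.
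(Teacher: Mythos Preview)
Your proposal is correct and follows essentially the same approach as the paper's proof: bound $F(\vx^{k+1})-F(\vx^k)$ by smoothness, bound $\eta(\vx^k,\vz^k)-\eta(\vx^{k+1},\vz^{k+1})$ via Lemma~\ref{lem:eta-smooth} and the optimality of $\vy^k$, control $\|\nabla F(\vx^{k+1})-\vz^{k+1}\|^2$ with the convex-combination split and Lemma~\ref{lem:F-smooth}(b), use Lemma~\ref{lem:basic-ineq-T-level} for the inner function terms, collect into $W_{k+1}-W_k\le \tau_k\mathbf{A}_k+\mathbf{R}_k$, simplify $\mathbf{A}_k$ with Young's inequality, and telescope.

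Two small corrections worth noting before you write it out. First, the cross term you will actually see is not $\langle\nabla F(\vx^k)-\vz^k,\vd^k\rangle$: the $\tau_k\langle\nabla F(\vx^k),\vd^k\rangle$ from the $F$-expansion combines with the $-\tau_k\langle\vd^k,\prod_i\vJ_{T+1-i}^{k+1}\rangle$ coming from $\nabla_\vz\eta=\vd^k$ and the $\vz$-increment, while the $\vz^k$ piece is eaten by the optimality condition. What remains is $\tau_k\langle\vd^k,\nabla F(\vx^k)-\prod_i\nabla\vf_{T+1-i}(\vu_{T+2-i}^k)\rangle+\tau_k\langle\vd^k,\Delta^{k+1}\rangle$; the first piece is bounded via \eqref{lem:gradient-chain-diff-T-level} as $\sum_{j\ge2}C_j\|\vd^k\|\,\|\vf_j(\vu_{j+1}^k)-\vu_j^k\|$ and then split by Young, and the second goes into $\mathbf{R}_k$. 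Second, the optimality condition actually gives $-\beta\tau_k\|\vd^k\|^2$ (not $-\tfrac{\beta}{2}\tau_k$); it is the subsequent Young splits on $C_j\|\vd^k\|\,\|\vf_j-\vu_j\|$, $\beta\|\vd^k\|\,\|\tilde\vy^k-\vy^k\|$, and $\|\nabla F(\vx^k)-\vz^k\|\,\|\tilde\vy^k-\vy^k\|$ that erode it to the stated $-\tfrac{\beta}{2}$. Otherwise your outline matches the paper.
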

\begin{proof} 
We first bound $F(\vx^{k+1}) - F(\vx^k)$. By the Lipschitzness of $\nabla F$ (Lemma \ref{lem:F-smooth}), we have
\begin{equation}\label{eq:lemma-proof-smoothness-F-T-level}
    \begin{split}
        &F (\vx^{k+1}) - F(\vx^{k}) \leq \langle \nabla F(\vx^{k}), \vx^{k+1} - \vx^{k} \rangle + \frac{L_{\nabla F}\tau_k^2}{2} \| \tilde{\vd}^{k} \|^2\\
        &=\tau_k\langle \nabla F(\vx^{k}), \vd^k \rangle + \tau_k\langle\nabla F(\vx^{k}) - \vz^{k}, \tilde{\vy}^k - \vy^k \rangle- \tau_k \langle \beta \vd^k, \tilde{\vy}^k - \vy^k \rangle\\
        &\qquad+ \tau_k\langle\vz^k+\beta\vd^k, \tilde{\vy}^k - \vy^k\rangle + \frac{L_{\nabla F} \tau_k^2}{2} \| \tilde{\vd}^{k} \|^2\\
        &\leq \tau_k\langle \nabla F(\vx^{k}), \vd^k \rangle+ \tau_k\|\nabla F(\vx^{k}) - \vz^{k}\|\| \tilde{\vy}^k - \vy^k \| + \tau_k\langle\vz^k+\beta\vd^k, \tilde{\vy}^k - \vy^k\rangle \\
        &\qquad + \tau_k \beta \|\vd^k\|\| \tilde{\vy}^k - \vy^k\| +  \frac{L_{\nabla F} \tau_k^2}{2} \| \tilde{\vd}^{k} \|^2.
    \end{split}
\end{equation}

We then provide a bound for $\eta(\vx^k, \vz^k) - \eta (\vx^{k+1}, \vz^{k+1})$. By the lipschitzness of $\nabla \eta$ (Lemma \ref{lem:eta-smooth}) with the partial gradients of $\nabla \eta$ given by
\begin{equation*}
    \nabla_\vx \eta(\vx^k, \vz^k) = -\vz^k - \beta \vd^k,\quad \nabla_\vz \eta(\vx^k,\vz^k) = \vd^k,
\end{equation*}
we have
\begin{equation}\label{eq:lemma-proof-smoothness-eta}
    \begin{split}
        &\eta(\vx^k, \vz^k) - \eta (\vx^{k+1}, \vz^{k+1}) \\
        &\leq \big\langle \vz^k + \beta \vd^k, \vx^{k+1} - \vx^k\big\rangle - \big\langle \vd^k, \vz^{k+1}-\vz^k \big\rangle + \frac{L_{\nabla\eta}}{2} \bigg[\| \vx^{k+1} - \vx^k \|^2 + \| \vz^{k+1} - \vz^k \|^2\bigg]\\
        &= \tau_k \big\langle 2\vz^k + \beta \vd^k, \vd^k\big\rangle  + \tau_k \big\langle\vz^k + \beta \vd^k, \tilde{\vd}^k - \vd^k\big\rangle - \tau_k \big\langle \vd^k, \prod_{i=1}^{T} \vJ_{T-i+1}^{k+1}  \big\rangle\\
        & \qquad + \frac{L_{\nabla\eta}}{2} \bigg[\tau_k^2\| \tilde{\vd}^{k} \|^2 + \| \vz^{k+1} - \vz^k \|^2\bigg],
    \end{split}
\end{equation}
where the second equality comes from \eqref{eq:update-x-T-level} and \eqref{eq:update-z-T-level}. Due to the optimality condition of in the definition of $\vy^k$, we have $\big\langle  \vz^k +\beta \vd^k, \vx - \vy^k \big\rangle  \geq 
0$ for all $\vx \in  \setX$, which together with the choice of $\vx = \vx^k$ implies that
\begin{equation}\label{eq:lemma-proof-optimality-eta}
    \langle \vz^k, \vd^k \rangle + \beta \| \vd^k \|^2\leq 0.
\end{equation}
Thus, combining \eqref{eq:lemma-proof-smoothness-eta} with \eqref{eq:lemma-proof-optimality-eta}, we obtain
\begin{equation}\label{eq:lemma-proof-eta-diff-T-level}
    \begin{split}
        \eta (\vx^k, \vz^k) - \eta (\vx^{k+1}, \vz^{k+1})\leq  -\beta \tau_k \|\vd^k \|^2  + \tau_k \big\langle\vz^k + \beta \vd^k, \tilde{\vy}^k - \vy^k\big\rangle- \tau_k\big\langle \vd^k, \prod_{i=1}^{T} \vJ_{T-i+1}^{k+1} \big\rangle\\
        + \frac{L_{\nabla\eta}}{2} \bigg[\tau_k^2\| \tilde{\vd}^{k} \|^2 + \| \vz^{k+1} - \vz^k \|^2\bigg].
    \end{split}
\end{equation}
In addition, by Lemma \ref{lem:gradient-chain-diff-T-level}, we have
\begin{equation}\label{eq:lemma-proof-gradient-chain-diff-T-level}
    \langle \vd^k, \nabla F (\vx^k) - \prod_{i=1}^{T} \nabla \vf_{T+1-i} (\vu_{T+2-i}^k)  \rangle \leq \sum_{j=2}^{T} C_j \| \vd^k\| \| \vf_j(\vu_{j+1}^k) - \vu_{j}^k \|.
\end{equation}
Then combing \eqref{eq:lemma-proof-smoothness-F-T-level}, \eqref{eq:lemma-proof-eta-diff-T-level}, \eqref{eq:lemma-proof-gradient-chain-diff-T-level}, we have
\begin{equation}\label{eq:lemma-proof-F-eta-diff-T-level}
    \begin{split}
        &[F(\vx^{k+1}) - \eta(\vx^{k+1}, \vz^{k+1})] - [F(\vx^k) - \eta(\vx^k, \vz^k)]\\
        &\leq \tau_k \bigg\{ -\beta \| \vd^k \|^2 + \sum_{j=2}^{T} C_j \|\vd^k\|\|\vf_j(\vu_{j+1}^k) - \vu_j^k\|+ \langle \vd^k, \Delta^{k+1} \rangle +2 \langle \vz^k + \beta \vd^k, \tilde{\vy}^k - \vy^k \rangle\\
        &~+\left[\beta \|\vd^k\| + \|\nabla F(\vx^k) - \vz^k\|\right]\|\tilde{\vy}^k - \vy^k\| \bigg\} + \frac{L_{\nabla F} + L_{\nabla \eta}}{2} \tau_k^2 \| \tilde{\vd}^{k} \|^2 + \frac{L_{\nabla \eta}}{2} \| \vz^{k+1} - \vz^k \|^2.
    \end{split}
\end{equation}
Furthermore, defining
\begin{equation*}
        \varkappa^k := \nabla F(\vx^k) -  \prod_{i=1}^{T} \nabla \vf_{T+1-i}(\vu_{T+2-i}^k),\qquad \bar{ \varkappa}^k := \frac{\nabla F(\vx^{k+1}) - \nabla F(\vx^k)}{\tau_k},
\end{equation*}
and by the update rule given by \eqref{eq:update-z-T-level}, we have 
\begin{equation}\label{eq:lemma-proof-dual-recursion-T-level}
    \begin{split}
        &\| \nabla  F(\vx^{k+1}) - \vz^{k+1}  \|^2 \\
        &= \| (1-\tau_{k}) [ \nabla F(\vx^{k}) - \vz^{k}] + \tau_{k}[\varkappa^k + \bar{\varkappa}^k + \Delta^{k+1}]\|^2\\
        &= \| (1-\tau_{k}) [ \nabla F(\vx^{k}) - \vz^{k}] +  \tau_{k}[\varkappa^k + \bar{\varkappa}^k]\|^2 + \tau_{k}^2\|\Delta^{k+1}\|^2+ 2\tau_{k} \dddot{r}^{k+1}\\
        &\leq (1-\tau_{k}) \| \nabla F(\vx^{k}) -\vz^{k}\|^2 + 2\tau_{k} \left[\| \varkappa^k\|^2+ \|\bar{\varkappa}^k\|^2\right]+ \tau_{k}^2\|\Delta^{k+1}\|^2+ 2\tau_{k} \dddot{r}^{k+1}\\
        &\leq (1-\tau_{k}) \| \nabla F(\vx^{k}) -\vz^{k}\|^2 + \tau_{k}^2\|\Delta^{k+1}\|^2\\
        &+ 2\tau_{k} \left[(T-1) \sum_{j=2}^{T} C_j^2 \| \vf_{j}(\vu_{j+1}) - \vu_{j} \|^2+ 2L_{\nabla F}^2(\|\vd^k\|^2 + \|\tilde{\vy}^k- \vy^k \|^2)+\dddot{r}^{k+1}\right].\\
    \end{split}
\end{equation}
where $\dddot{r}^{k+1}:=\langle \Delta^{k+1}, (1-\tau_{k}) [ \nabla F(\vx^{k}) - \vz^{k}] + \tau_{k}[\varkappa^k + \bar{\varkappa}^k ]\rangle$ and the last inequality comes from two fact that $\|\bar{\varkappa}^k\|^2\leq 2L_{\nabla F}^2(\|\vd^k\|^2 + \|\tilde{\vy}^k- \vy^k \|^2)$ and 
\begin{equation*}
    \begin{split}
        \| \varkappa^k \|^2 = \left\| \nabla F(\vx^k) - \prod_{i=1}^{T} \nabla \vf_{T+1-i}(\vu_{T+2-i}^k) \right\|^2  \leq (T-1) \sum_{j=2}^{T} C_j^2 \| \vf_{j}(\vu_{j+1}) - \vu_{j} \|^2.
    \end{split}
\end{equation*}
The above upper bound for the term $\|\varkappa^k \|^2$ is obtained by leveraging Lemma \ref{lem:gradient-chain-diff-T-level} and the fact that $(\sum_{i=1}^{n} a_i) \leq n \sum_{i=1}^{n} a_i^2$ for non-negative sequence $(a_i)_{1\leq i \leq n}$. 

Moreover, by Lemma \ref{lem:basic-ineq-T-level}, we have, for $1\leq i \leq T$,
\begin{equation}\label{eq:lemma-proof-function-value-error-diff-T-level}
\begin{split}
    &\| \vf_{i} (\vu_{i+1}^{k+1}) - \vu_{i}^{k+1} \|^2 - \| \vf_i(\vu_{i+1}^k) - \vu_{i}^{k} \|^2 \leq -\tau_k \| \vf_i(\vu_{i+1}^k) - \vu_{i}^{k} \|^2 + \tau_k^2 \| \Delta_{\vG_i}^{k+1} \|^2  + \dot{r}_{i}^{k+1}\\
    &\qquad \qquad + \left[ 4 L_{\vf_i}^2  +  L_{\nabla \vf_i} \| \vf_i(\vu_{i+1}^k) - \vu_{i}^{k} \| + \| \Delta_{\vJ_i}^{k+1} \|^2\right]\| \vu_{i+1}^{k+1} - \vu_{i+1}^{k} \|^2,
\end{split}
\end{equation}

Finally, multiplying both sides of \eqref{eq:lemma-proof-function-value-error-diff-T-level} by $\gamma_i$ for $i=1,\dots, T$ and both sides of \eqref{eq:lemma-proof-dual-recursion-T-level} by $\alpha$, adding them to \eqref{eq:lemma-proof-F-eta-diff-T-level}, rearranging the terms, and noting that $\langle\vz^k+\beta\vd^k, \tilde{\vy}^k - \vy^k\rangle = H_k (\tilde{\vy}^k) - H_k(\vy^k) - (\beta/2)\| \tilde{\vy}^k - \vy^k \|^2$ due to the quadratic structure of $H_k$ and $\| \tilde{\vd}^{k} \|^2\leq D_{\setX}^2$, we obtain
\begin{equation}\label{eq:merit-func-diff-T-level}
    W_{k+1} -  W_k \leq \tau_k \mathbf{A}_k + \mathbf{R}_k
\end{equation}
where $\mathbf{R}_k$ is defined in \eqref{eq:lemma-definition-Rk-T-level} and
\begin{equation*}
\begin{split}
   \mathbf{A}_k := & \left(-\beta + 4\alpha L_{\nabla F}^2\right) \| \vd^k \|^2 + \sum_{j=2}^{T}\left(-\gamma_j + 2\alpha (T-1)C_j^2\right) \| \vf_{j}(\vu_{j+1}^k) - \vu_{j}^k \|^2 \\
   & -\gamma_1\| \vf_1(\vu_2^k) - \vu_1^k \|^2-\alpha \| \nabla F(\vx^k) -\vz^k\|^2 + \sum_{j=2}^{T} C_j  \| \vd^k\|\| \vf_j(\vu_{j+1}) - \vu_{j} \|\\
        & + \left(\beta \| \vd^k \|  + \| \nabla F(\vx^k) -\vz^k\| \right) \|\tilde{\vy}^k - \vy^k\|+ \left(4\alpha L_{\nabla F}^2 -\beta\right)\| \tilde{\vy}^k - \vy^k\|^2\\
         &  +2 \left( H_k(\tilde{\vy}^k) - H_k(\vy^k)\right).
\end{split}
\end{equation*}
We can further provide a simplified upper bound for $\mathbf{A}_k$. By Young's inequality, we have
\begin{equation*}
    \begin{split}
        \beta \| \vd^k \| \| \tilde{\vy}^k - \vy^k \| &\leq \frac{\beta}{4} \| \vd^k \|^2 + \beta \|   \tilde{\vy}^k - \vy^k\|^2,\\
        \| \nabla F(\vx^k) - \vz^k \| \|\tilde{\vy}^k - \vy^k \| &\leq \frac{\alpha}{2} \| \nabla F(\vx^k) - \vz^k \|^2 + \frac{1}{2\alpha} \|\tilde{\vy}^k - \vy^k \|^2\\
        C_j  \| \vd^k\|\| \vf_j(\vu_{j+1}) - \vu_{j} \| &\leq \frac{\alpha L_{\nabla F}^2}{ T-1}\| \vd^k\|^2 + \frac{ (T-1) C_j^2}{4\alpha  L_{\nabla F}^2}\| \vf_j(\vu_{j+1}) - \vu_{j} \|^2.
    \end{split}
\end{equation*}
Thus, 
\begin{equation*}
\begin{split}
   \mathbf{A}_k \leq & \left(-\frac{3\beta}{4} + 5\alpha L_{\nabla F}^2\right) \| \vd^k \|^2 -\gamma_1\| \vf_1(\vu_2^k) - \vu_1^k \|^2 -\frac{\alpha}{2}\| \nabla F(\vx^k) -\vz^k\|^2 \\
   &+ \sum_{j=2}^{T}\left(-\gamma_j + \left(2\alpha + \frac{1}{4\alpha L_{\nabla F}^2}\right) (T-1)C_j^2\right) \| \vf_{j}(\vu_{j+1}^k) - \vu_{j}^k \|^2 \\
         & + \left(4\alpha L_{\nabla F}^2 +\frac{1}{2\alpha}\right)\| \tilde{\vy}^k - \vy^k\|^2 +2 \left( H_k(\tilde{\vy}^k) - H_k(\vy^k)\right)\\
\end{split}
\end{equation*}
For any $\beta>0$, let
\begin{equation*}
\alpha = \frac{\beta}{20 L_{\nabla F}^2},\qquad \gamma_1 = \frac{\beta}{2}  ,\qquad \gamma_j =  \left(2\alpha + \frac{1}{4\alpha L_{\nabla F}^2}\right) (T-1)C_j^2 + \frac{\beta}{2},\quad 2\leq j\leq T
\end{equation*}
Then, we have
\begin{equation}\label{eq:lemma-Ak-bound-T-level}
\begin{split}
   \mathbf{A}_k \leq   -\frac{\beta}{2} \left(\| \vd^k \|^2  + \sum_{i=1}^{T} \| \vf_{i}(\vu_{i+1}^k) - \vu_{i}^k \|^2  \right)  -\frac{\beta}{40L_{\nabla F}^2}\| \nabla F(\vx^k) -\vz^k\|^2 \\
          + \left(\frac{12}{5} + \frac{20L_{\nabla F}^2}{\beta^2}\right)\left( H_k(\tilde{\vy}^k) - H_k(\vy^k)\right).\\
\end{split}
\end{equation}
As a result of \eqref{eq:merit-func-diff-T-level} and \eqref{eq:lemma-Ak-bound-T-level}, we can further obtain
\begin{equation*}
\begin{split}
   \tau_k \left( \beta \left[\| \vd^k \|^2  + \sum_{i=1}^{T} \| \vf_{i}(\vu_{i+1}^k) - \vu_{i}^k \|^2  \right]  +\frac{\beta}{20L_{\nabla F}^2}\| \nabla F(\vx^k) -\vz^k\|^2 \right)\\
   \leq 2W_k - 2W_{k+1} + 2\mathbf{R}_k + \tau_k\left(\frac{24}{5} + \frac{40L_{\nabla F}^2}{\beta^2}\right)\left( H_k(\tilde{\vy}^k) - H_k(\vy^k)\right),
\end{split}
\end{equation*}
which immediately implies \eqref{eq:lemma-merit-function-telescoping-T-level} by telescoping. 
\end{proof}

\begin{proposition}\label{prop:Rk}
Let $\mathbf{R}_k$ be defined in \eqref{eq:lemma-definition-Rk-T-level} and $\tau_0=1$. Then, under Assumption \ref{aspt:oracle-T-level}, we have
\begin{equation*}
    \E[\mathbf{R}_k |\setF_k ] \leq \hat{\sigma}^2 \tau_k^2, \quad \forall k\geq 1,
\end{equation*}
where
\begin{equation}\label{eq:definition-sigma-hat}
\begin{split}
    \hat{\sigma}^2 := &\sum_{i=1}^{T} \gamma_i \left(\left[4L_{\nabla \vf_i}^2 + L_{\nabla \vf_i}\sqrt{\sigma_{\vG_i}^2 + (4L_{\vf_i}^2 + \sigma_{\vJ_i}^2)c_{i+1}} + \sigma_{\vJ_i}^2\right]c_{i+1} + \sigma_{\vG_i}^2\right) \\
    &+ (\alpha + 2L_{\eta}) \prod_{i=1}^{T} \hat{\sigma}_{\vJ_i}^2 + \frac{L_{\nabla F} + L_{\eta}}{2} D_{\setX}^2.
\end{split}
\end{equation}
\end{proposition}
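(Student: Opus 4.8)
The plan is to take the conditional expectation $\E[\cdot\,|\setF_k]$ of each of the four groups of terms appearing in the definition~\eqref{eq:lemma-definition-Rk-T-level} of $\mathbf{R}_k$, and show that each contributes at most $\mathcal{O}(\tau_k^2)$. The key observation is that every term in $\mathbf{R}_k$ is either (i) already multiplied by $\tau_k^2$, or (ii) multiplied by $\| \vu_{i+1}^{k+1} - \vu_{i+1}^k\|^2$, which by Lemma~\ref{lem:mse-uk-diff-order} (specifically~\eqref{eq:lemma-function-value-diff}) is $\mathcal{O}(\tau_k^2)$ in conditional expectation, or (iii) one of the "residual" inner products $\dot r_i^{k+1}$, $\dddot r^{k+1}$, $\langle \vd^k,\Delta^{k+1}\rangle$, each of which has zero conditional mean by the unbiasedness in Assumption~\ref{aspt:oracle-T-level}(a). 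So the proof is essentially a bookkeeping exercise, and the role of $\tau_0 = 1$ is that it makes~\eqref{eq:lemma-function-value-error} of Lemma~\ref{lem:mse-uk-diff-order} valid for all $k\geq 1$, which is exactly what we need to bound the $\| \vf_i(\vu_{i+1}^k) - \vu_i^k\|$ factors.

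Concretely, I would proceed term by term. First, for the zero-mean inner products: since $\vd^k$, $\vx^{k+1}$ (and hence $\nabla F(\vx^{k+1})$), $\nabla F(\vx^k) - \vz^k$, $\varkappa^k$, and $\vu_{i+1}^{k+1}-\vu_{i+1}^k$... wait — actually $\vu_{i+1}^{k+1}$ is \emph{not} $\setF_k$-measurable for $i+1\le T$. Here one must use Assumption~\ref{aspt:oracle-T-level}(c): the factor $\Delta_{\vG_i}^{k+1}$ in $\dot r_i^{k+1}$ is independent of $\vJ_j^{k+1}$ for all $j$ and of $\vG_j^{k+1}$ for $j\neq i$, hence independent of $\vu_{i+1}^{k+1}$ (which depends only on levels $i+1,\dots,T$); combined with $\E[\Delta_{\vG_i}^{k+1}|\setF_k]=0$ this kills the $\Delta_{\vG_i}^{k+1}$ parts of $\dot r_i^{k+1}$. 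The $\Delta_{\vJ_i}^{k+1}$ parts of $\dot r_i^{k+1}$ are handled the same way, using that $\Delta_{\vJ_i}^{k+1}$ is independent of $\vu_{i+1}^{k+1}$ and of the $\setF_k$-measurable terms $\ve_i^k$ (no — $\ve_i^k$ depends on $\vu_{i+1}^{k+1}$ too), so one groups $\Delta_{\vJ_i}^{k+1}$ against everything else and uses the conditional independence and zero mean. Similarly $\E[\langle \vd^k, \Delta^{k+1}\rangle|\setF_k] = \langle \vd^k, \E[\Delta^{k+1}|\setF_k]\rangle = 0$ because $\vd^k$ is $\setF_k$-measurable and $\E[\prod_i \vJ_{T-i+1}^{k+1}|\setF_k] = \prod_i \nabla\vf_{T+1-i}(\vu_{T+2-i}^k)$ by parts (a) and (c) (independence across levels lets the expectation factor through the product). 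And $\E[\dddot r^{k+1}|\setF_k]=0$ for the same reason, since all three vectors paired against $\Delta^{k+1}$ inside $\dddot r^{k+1}$ are $\setF_k$-measurable (note $\nabla F(\vx^{k+1})$ depends on $\vx^{k+1}=\vx^k+\tau_k(\tilde\vy^k-\vx^k)$, and $\tilde\vy^k=\texttt{ICG}(\vx^k,\vz^k,\dots)$ is a deterministic function of $\setF_k$-measurable quantities).

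Second, for the genuinely $\mathcal{O}(\tau_k^2)$ contributions: the term $\tau_k^2[\frac{L_{\nabla F}+L_{\nabla\eta}}{2}D_\setX^2 + \sum_i \gamma_i\|\Delta_{\vG_i}^{k+1}\|^2 + \alpha\|\Delta^{k+1}\|^2]$ is bounded using $\E[\|\Delta_{\vG_i}^{k+1}\|^2|\setF_k]\le\sigma_{\vG_i}^2$ (Assumption~\ref{aspt:oracle-T-level}(b)) and $\E[\|\Delta^{k+1}\|^2|\setF_k]\le \E[\|\prod_i\vJ^{k+1}_{T-i+1}\|^2|\setF_k] \le \prod_i\E[\|\vJ_i^{k+1}\|^2|\setF_k]\le\prod_i\hat\sigma_{\vJ_i}^2$ (using part (c) for the factorization and the Remark after Assumption~\ref{aspt:oracle-T-level}); this also matches the $\prod_i\hat\sigma_{\vJ_i}^2$ appearing in~\eqref{eq:definition-sigma-hat}. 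The term $\frac{L_{\nabla\eta}}{2}\|\vz^{k+1}-\vz^k\|^2 = \frac{L_{\nabla\eta}}{2}\tau_k^2\|\vz^k - \prod_i\vJ^{k+1}_{T-i+1}\|^2$ by~\eqref{eq:update-z-T-level}; bounding $\|\vz^k\|$ is slightly delicate but since $\vz^k$ is a $\tau$-weighted convex-combination average of products of Jacobians starting from $\vz^0=0$, and using $\tau_0=1$, one gets $\E[\|\vz^k - \prod_i\vJ^{k+1}_{T-i+1}\|^2|\setF_k]\le 2\prod_i\hat\sigma_{\vJ_i}^2 + 2\|\vz^k\|^2$, and a uniform second-moment bound on $\vz^k$ follows inductively (or one simply folds this into the constant $\hat\sigma^2$). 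The remaining term $\sum_i\gamma_i[4L_{\vf_i}^2 + L_{\nabla\vf_i}\|\vf_i(\vu_{i+1}^k)-\vu_i^k\| + \|\Delta_{\vJ_i}^{k+1}\|^2]\|\vu_{i+1}^{k+1}-\vu_{i+1}^k\|^2$ is the one that produces the bulk of $\hat\sigma^2$: I would condition, use Assumption~\ref{aspt:oracle-T-level}(c) to split $\E[\|\Delta_{\vJ_i}^{k+1}\|^2\|\vu_{i+1}^{k+1}-\vu_{i+1}^k\|^2|\setF_k] \le \sigma_{\vJ_i}^2\,\E[\|\vu_{i+1}^{k+1}-\vu_{i+1}^k\|^2|\setF_k]$ (since $\Delta_{\vJ_i}^{k+1}$ is independent of $\vu_{i+1}^{k+1}$), then apply $\E[\|\vu_{i+1}^{k+1}-\vu_{i+1}^k\|^2|\setF_k]\le c_{i+1}\tau_k^2$ from~\eqref{eq:lemma-function-value-diff}, and for the middle term use Cauchy–Schwarz followed by~\eqref{eq:lemma-function-value-error}, i.e. $\E[\|\vf_i(\vu_{i+1}^k)-\vu_i^k\|\cdot\|\vu_{i+1}^{k+1}-\vu_{i+1}^k\|^2|\setF_k] \le \sqrt{\E[\|\vf_i(\vu_{i+1}^k)-\vu_i^k\|^2|\setF_k]}\cdot(\text{something})$ — more cleanly, bound $\|\vf_i(\vu_{i+1}^k)-\vu_i^k\|$ by its (deterministic, since it's $\setF_k$-measurable given $k\ge1$) value and use~\eqref{eq:lemma-function-value-error} which holds surely? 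No—\eqref{eq:lemma-function-value-error} is in conditional expectation. The cleanest route: $\|\vf_i(\vu_{i+1}^k)-\vu_i^k\|$ is $\setF_{k-1}$-measurable (it is determined at the end of iteration $k-1$), but we are conditioning on $\setF_k\supset\setF_{k-1}$, so it \emph{is} a constant given $\setF_k$; then pull it out and apply~\eqref{eq:lemma-function-value-diff}; finally take a further expectation over $\setF_{k-1}$ if needed, or observe that~\eqref{eq:lemma-function-value-error} gives a uniform bound on $\E[\|\vf_i(\vu_{i+1}^k)-\vu_i^k\|^2]$ and apply Jensen. Either way one arrives at the stated $\hat\sigma^2$.

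The main obstacle is the careful handling of measurability and conditional independence: the vectors $\vu_{i+1}^{k+1}$ for $i+1\le T$ are \emph{not} $\setF_k$-measurable, so one cannot naively pull them outside the conditional expectation, and the zero-mean cancellations of $\dot r_i^{k+1}$ and $\dddot r^{k+1}$ rely essentially on Assumption~\ref{aspt:oracle-T-level}(c) (independence of the oracle outputs across levels, and of $\vG_i^{k+1}$ from $\vJ_i^{k+1}$ within a level) — this is precisely where that assumption is used. Everything else is routine application of Cauchy–Schwarz, Young's inequality, the variance bounds in Assumption~\ref{aspt:oracle-T-level}(b), and Lemma~\ref{lem:mse-uk-diff-order}, and the constant $\hat\sigma^2$ in~\eqref{eq:definition-sigma-hat} is simply the sum of all the resulting coefficients.
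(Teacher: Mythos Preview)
Your proposal is correct and follows essentially the same approach as the paper: show the inner-product terms $\langle \vd^k,\Delta^{k+1}\rangle$, $\dot r_i^{k+1}$, $\dddot r^{k+1}$ have zero conditional mean via Assumption~\ref{aspt:oracle-T-level}, bound the explicit $\tau_k^2$-terms by the variance assumptions, and control the $\|\vu_{i+1}^{k+1}-\vu_{i+1}^k\|^2$ terms via Lemma~\ref{lem:mse-uk-diff-order}. Two minor points of comparison: (i) for $\|\vz^{k+1}-\vz^k\|^2$ the paper simply invokes the bound $\E[\|\vz^{k+1}-\vz^k\|^2\,|\,\setF_k]\le 4\tau_k^2\prod_i\hat\sigma_{\vJ_i}^2$ from~\cite{balasubramanian2020stochastic}, which is cleaner than your inductive sketch and explains the coefficient $2L_\eta\prod_i\hat\sigma_{\vJ_i}^2$ in~\eqref{eq:definition-sigma-hat}; (ii) for the cross term $\|\vf_i(\vu_{i+1}^k)-\vu_i^k\|\cdot\|\vu_{i+1}^{k+1}-\vu_{i+1}^k\|^2$ you correctly observe that the first factor is $\setF_k$-measurable and that a further expectation (over $\setF_{k-1}$, using~\eqref{eq:lemma-function-value-error}) is needed to reach a deterministic constant---this is in fact more careful than the paper's own write-up, which is somewhat loose at this step.
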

\begin{proof}
Note that under Assumption \ref{aspt:oracle-T-level}, we have, for $1\leq i\leq T$,
\begin{equation*}
\begin{split}
    &\E[\Delta^{k+1}|\setF_k] = 0,\quad \E[\dot{r}_i^{k+1}|\setF_k] = 0,\quad \E[\dddot{r}^{k+1}|\setF_k] = 0,\\
    &\E[\| \Delta_{\vG_i}^{k+1}\|^2|\setF_k]\leq \sigma_{\vG_i}^2, \quad \E[\|\Delta_{\vJ_i}^{k+1}\|^2 | \setF_k]\leq \sigma_{\vJ_i}^2,
\end{split}
\end{equation*}
and
\begin{equation*}
    \E [\|\Delta^{k+1} \|^2 |\setF_k] \leq \E\left[\left\|\prod_{i=1}^{T}\vJ^{k+1}_{T-i+1} \right\|^2\middle|\setF_k\right]\leq  \prod_{i=1}^{T}\E\left[\left\|\vJ^{k+1}_{T-i+1} \right\|^2\middle|\setF_k\right] \leq \prod_{i=1}^{T} \hat{\sigma}_{\vJ_i}^2.
\end{equation*}
In addition, by Lemma \ref{lem:basic-ineq-T-level} and H\"{o}lder's inequality. we have $\E[\| \vu_{i}^{k+1} - \vu_{i}^k \|^2|\setF_k]\leq c_i \tau_k^2$ and 
\begin{equation*}
\begin{split}
    &\E[\| \vf_i(\vu_{i+1}^{k+1}) - \vu_{i}^{k+1} \|\|\vu_{i}^{k+1} - \vu_{i}^{k}\|^2|\setF_k]\\
    &\leq \E[\| \vf_i(\vu_{i+1}^{k+1}) - \vu_{i}^{k+1} \||\setF_k] \E[\|\vu_{i}^{k+1} - \vu_{i}^{k}\|^2|\setF_k]\\
    &\leq \left(\E[\| \vf_i(\vu_{i+1}^{k+1}) - \vu_{i}^{k+1} \||\setF_k]\right)^{\frac{1}{2}} \E[\|\vu_{i}^{k+1} -\vu_{i}^{k}\|^2|\setF_k]\\
    &\leq c_i \sqrt{\sigma_{\vG_i}^2 + (4L_{\vf_i}^2 + \sigma_{\vJ_i}^2) c_{i+1}}~\tau_k^2.
\end{split}
\end{equation*}
Lastly, from eq.(28) of Proposition 2.1 in \cite{balasubramanian2020stochastic}, we have for any $k\geq 1$,
\begin{equation*}
    \E[\| \vz^{k+1} - \vz^{k} \|^2 | \setF_k] \leq 4\tau_k^2 \prod_{i=1}^{T} \hat{\sigma}_{\vJ_i}^2.
\end{equation*}
The proof is completed by combing all above observations with the expression of $\mathbf{R}_k$ in \eqref{eq:lemma-definition-Rk-T-level}.
\end{proof}

\begin{proof} [Proof of Theorem \ref{thm:main-T-level}] We now present the proof of Theorem \ref{thm:main-T-level}. Note that by Lemma \ref{lem:merit-function-T-level} and given values of $\alpha, \gamma$ in \eqref{eq:alpha-gamma-values}, we obtain
\begin{equation*}
\begin{split}
   &\sum_{k=1}^{N}\tau_k \left[ \beta \left(\| \vd^k \|^2  + \sum_{i=1}^{T} \| \vf_{i}(\vu_{i+1}^k) - \vu_{i}^k \|^2  \right)  +\frac{\beta}{20 L_{\nabla F}^2}\| \nabla F(\vx^k) -\vz^k\|^2 \right]\\
   &\leq 2W_{\alpha,\gamma}(\vx^0, \vz^0, \vu^0) + 2\sum_{k=0}^{N} \mathbf{R}_k + \left(\frac{24}{5} + \frac{40L_{\nabla F}^2}{\beta^2}\right)\sum_{k=0}^{N}\tau_k\left( H_k(\tilde{\vy}^k) - H_k(\vy^k)\right),
\end{split}
\end{equation*}
Taking expectation of both sides and noting that $\E[\mathbf{R}_k | \setF_k] \leq \hat{\sigma}^2 \tau_k^2$ by Proposition \ref{prop:Rk}, we have
\begin{equation}\label{eq:thm-expection-basic}
    \begin{split}
   &\sum_{k=1}^{N} \tau_k\E\left[ \rho \left(\| \vd^k \|^2  + \sum_{i=1}^{T} \| \vf_{i}(\vu_{i+1}^k) - \vu_{i}^k \|^2  \right)  +\alpha\| \nabla F(\vx^k) -\vz^k\|^2 \middle| \setF_{k-1}\right]\\
   &\leq 2W_{\alpha,\gamma}(\vx^0, \vz^0, \vu^0) + 2\hat{\sigma}^2 \sum_{k=0}^{N} \tau_k^2 + \left(\frac{24}{5} + \frac{40L_{\nabla F}^2}{\beta^2}\right)\sum_{k=0}^{N}\tau_k\left( H_k(\tilde{\vy}^k) - H_k(\vy^k)\right).
\end{split}
\end{equation}
Then, setting $\tau_k, t_k$ to be values in \eqref{thm:parameter-value-T-level} and noting that by Lemma \ref{lem:fw}, we have
\begin{equation*}
    H_k(\tilde{\vy}^k) - H_k(\vy^k)\leq \frac{2\beta D_{\setX}^2 (1+\delta)}{t_k+2} \leq \frac{2\beta D_{\setX}^2 (1+\delta)}{\sqrt{k}}, \quad \forall k\geq 1.
\end{equation*}
Also, with the choice of $\vz^0=0$, we have $\vy^0=\tilde{\vy}^0=\vx^0$. Thus, we can conclude that
\begin{equation*}
    \sum_{k=0}^{N}\tau_k\left( H_k(\tilde{\vy}^k) - H_k(\vy^k)\right) \leq  \frac{2\beta D_{\setX}^2 (1+\delta)}{\sqrt{N}}\sum_{k=1}^{N} \frac{1}{\sqrt{k}} \leq 4\beta D_{\setX}^2 (1+\delta).
\end{equation*}
which together with \eqref{eq:thm-expection-basic} immediately imply that $\forall N \geq 1$,
\begin{equation*}
\begin{split}
   \frac{1}{\sqrt{N}}\sum_{k=1}^{N} \E\left[ \beta \left(\| \vd^k \|^2  + \sum_{j=1}^{T} \| \vf_{j}(\vu_{j+1}^k) - \vu_{j}^k \|^2  \right)  + \frac{\beta}{20 L_{\nabla F}^2}\| \nabla F(\vx^k) -\vz^k\|^2\middle|\setF_{k-1} \right]\\
   \leq 2W_{\alpha,\gamma}(\vx^0, \vz^0, \vu^0) + \mathcal{B}(\beta, \sigma^2, L, D_{\setX}, T, \delta).
\end{split}
\end{equation*}
where 
\begin{equation*}
    \mathcal{B}(\beta, \sigma^2, L, D_{\setX}, T, \delta) = 4\hat{\sigma}^2 + 32\beta D_{\setX}^2(1+\delta) \left(\frac{3}{5} + \frac{5L_{\nabla F}^2}{\beta^2}\right),
\end{equation*}
and $\hat{\sigma}^2$ is given in \eqref{eq:definition-sigma-hat}.
As a result, we can obtain \eqref{eq:thm-gradient-mapping} and \eqref{eq:thm-function-value-error} by the definition of random integer $R$ and
\begin{equation*}
\begin{split}
  \| \mathcal{G}(\vx^k, \nabla F(\vx^k), \beta) \|^2 &\leq  2\beta^2 \| \vd^k \|^2 +2\beta^2\left\| \proj_{\setX}\left(\vx^k- \frac{1}{\beta}\nabla F(\vx^k)\right)-\proj_{\setX}\left(\vx^k- \frac{1}{\beta}\vz^k\right)\right\|^2\\
  &\leq 2\beta^2 \| \vd^k \|^2 + 2\|\nabla F(\vx^k)-\vz^k\|^2.
\end{split}
\end{equation*}
\end{proof}

\section{Proofs for Section~\ref{sec:specialcase}}\label{sec:proof-thm-one-two}

\subsection{Proof of Theorem~\ref{thm:two-level} for $T=2$}

To show the rate of convergence for Algorithm \ref{alg:CG-NASA},  we simplify the merit function in the analysis of the multi-level problems and leverage the following function: 
\begin{equation}\label{eq:definition-merit-fucntion}
    W_{\alpha,\gamma}(\vx^k, \vz^k, \vu^k) = F(\vx^k) -F^\star - \eta (\vx^{k}, \vz^k) + \alpha\| \nabla F(\vx^k) - \vz^k \|^2 + \gamma \| \vf_2(\vx^k) - \vu_2^k\|^2,
\end{equation}
where $\alpha, \gamma$ are positive constants, $\eta(\cdot, \cdot)$ is defined in \eqref{eq:definition-eta}. We now present the analogue of Lemma \ref{lem:merit-function-T-level} for Algorithm \ref{alg:CG-NASA}. The proof follows similar steps as that proof of Lemma \ref{lem:merit-function-T-level} with slight modifications, and hence we will skip some arguments already presented before.

\begin{lemma}
\label{lem:merit-function}
Let $\{\vx^k, \vz^k, \vu_2^k\}_{k\geq 0}$ be the sequence generated by Algorithm \ref{alg:CG-NASA} and the merit function $W_{\alpha, \gamma}(\cdot, \cdot, \cdot)$ be defined in \eqref{eq:definition-merit-fucntion} with
\begin{equation*}
    \alpha = \frac{\rho}{L_{\nabla F}}, \qquad \gamma = 3\rho L_{\nabla f_1}, \qquad \rho > 0.
\end{equation*}
Under Assumptions \ref{aspt:lipschitz-T-level} with $T=2$, setting 
$\beta_k\equiv\beta \geq 6\rho L_{\nabla F} + (2\rho +\frac{2}{3\rho})L_{\nabla f_1}L_{\vf_2}^2$,
we have $\forall N \geq 0$
\begin{equation*}
\begin{split}
     &\rho \sum_{k=0}^{N}\tau_k \bigg(L_{\nabla F} \| \vd^k \|^2 +  L_{\nabla \vf_1}\| \vf_2(\vx^k) - \vu_2^k \|^2 +  \frac{1}{L_{\nabla F}}\| \nabla F(\vx^k) - \vz^k\|^2\bigg) \\
     &\leq 2W_0 +  2\sum_{k=0}^{N}\mathbf{R}_k + \bigg(4+ \frac{2(8\rho + 1/\rho) L_{\nabla F} + 24 \rho L_{\nabla \vf_1} L_{\vf_2}^2 }{\beta}\bigg) \sum_{k=0}^{N} \tau_k \left( H_k(\tilde{\vy}^k) - H_k(\vy^k)\right) 
\end{split}
\end{equation*}
where $\vd^k = \vy^k - \vx^k$, $H_k(\cdot), \vy^k$ are defined in \eqref{eq:definition-etak-yk}, and 
\begin{equation}\label{eq:lemma-2-level-definition}
\begin{split}
    \mathbf{R}_k :=& \tau_k^2 \left[\frac{L_{\nabla F}+ L_{\nabla\eta}}{2} D_{\setX}^2  + \gamma \| \Delta_{\vG_2}^{k+1} \|^2 + \alpha \| \Delta^{k+1} \|^2\right]+ \frac{L_{\eta}}{2} \|\vz^{k+1} -\vz^k \|^2\\
    &\qquad + \tau_k \langle \vd^k , \Delta^{k+1}\rangle + \gamma \dot{r}^{k+1} + \alpha \ddot{r}^{k+1},\\
   \Delta^{k+1}:= &\nabla \vf_2(\vx^{k}) \nabla \vf_1 (\vu_2^k) - \vJ_2^{k+1} \vJ_1^{k+1}, \quad \Delta_{\vG_2}^{k+1}:= \vf_2(\vx^{k}) - \vG_2^{k+1}\\
   \dot{r}^{k+1} :=& 2\tau_k \langle \Delta_{\vG_2}^{k+1}, \vf_2(\vx^{k+1}) - \vf_2(\vx^k) + (1-\tau_k) (\vf_2(\vx^k)-\vu_2^k) \rangle,\\
   \ddot{r}^{k+1} :=& 2\tau_{k} \langle \Delta^{k+1}, (1-\tau_{k}) [ \nabla F(\vx^{k}) - \vz^{k}] + \tau_k[\nabla F(\vx^{k}) - \nabla \vf_2(\vx^{k}) \nabla \vf_1(\vu^k)] \\
   &\qquad + \nabla F(\vx^{k+1}) - \nabla F(\vx^{k}) \rangle.
\end{split}
\end{equation}
\end{lemma}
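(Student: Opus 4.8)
The plan is to replay the proof of Lemma~\ref{lem:merit-function-T-level} for the special case $T=2$, keeping in mind that Algorithm~\ref{alg:CG-NASA} updates $\vu_2$ \emph{without} the linearization correction, so the control of the inner-function error degrades and the choice of $\beta$ must compensate. Three of the four ingredients carry over essentially verbatim. First, by $L_{\nabla F}$-smoothness of $F$ (Lemma~\ref{lem:F-smooth} with $i=1$, which gives $L_{\nabla F}=L_{\nabla f_1}L_{\vf_2}^2+L_{\vf_1}L_{\nabla f_2}$) together with $\vx^{k+1}-\vx^k=\tau_k\vd^k+\tau_k(\tilde\vy^k-\vy^k)$ and $\|\tilde\vd^k\|\le D_\setX$, one bounds $F(\vx^{k+1})-F(\vx^k)$ by $\tau_k\langle\nabla F(\vx^k),\vd^k\rangle+\tau_k\langle\vz^k+\beta\vd^k,\tilde\vy^k-\vy^k\rangle$ plus the terms $\tau_k(\beta\|\vd^k\|+\|\nabla F(\vx^k)-\vz^k\|)\|\tilde\vy^k-\vy^k\|$ and $\tfrac{L_{\nabla F}}{2}\tau_k^2D_\setX^2$. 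Second, by $L_{\nabla\eta}$-smoothness of $\eta$ (Lemma~\ref{lem:eta-smooth}), using $\nabla_\vx\eta=-\vz^k-\beta\vd^k$, $\nabla_\vz\eta=\vd^k$, the updates \eqref{eq:update-x-T-level} and the $\vz$-update of Algorithm~\ref{alg:CG-NASA}, and the optimality inequality $\langle\vz^k+\beta\vd^k,\vx-\vy^k\rangle\ge0$ at $\vx=\vx^k$ (which yields $\langle\vz^k,\vd^k\rangle+\beta\|\vd^k\|^2\le0$), one gets $\eta(\vx^k,\vz^k)-\eta(\vx^{k+1},\vz^{k+1})\le-\beta\tau_k\|\vd^k\|^2+\tau_k\langle\vz^k+\beta\vd^k,\tilde\vy^k-\vy^k\rangle-\tau_k\langle\vd^k,\vJ_2^{k+1}\vJ_1^{k+1}\rangle+\tfrac{L_{\nabla\eta}}{2}(\tau_k^2D_\setX^2+\|\vz^{k+1}-\vz^k\|^2)$. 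Writing $\nabla F(\vx^k)-\vJ_2^{k+1}\vJ_1^{k+1}=\varkappa^k+\Delta^{k+1}$ with $\varkappa^k:=\nabla F(\vx^k)-\nabla\vf_2(\vx^k)\nabla\vf_1(\vu_2^k)$, $\|\varkappa^k\|\le C_2\|\vf_2(\vx^k)-\vu_2^k\|$, $C_2=L_{\nabla f_1}L_{\vf_2}$, and adding these two bounds gives an estimate for $[F-\eta](\vx^{k+1},\vz^{k+1})-[F-\eta](\vx^k,\vz^k)$. Third, expanding $\|\nabla F(\vx^{k+1})-\vz^{k+1}\|^2$ via $\nabla F(\vx^{k+1})-\vz^{k+1}=(1-\tau_k)(\nabla F(\vx^k)-\vz^k)+\tau_k(\varkappa^k+\bar\varkappa^k+\Delta^{k+1})$, $\bar\varkappa^k:=(\nabla F(\vx^{k+1})-\nabla F(\vx^k))/\tau_k$, and using $\|\bar\varkappa^k\|^2\le2L_{\nabla F}^2(\|\vd^k\|^2+\|\tilde\vy^k-\vy^k\|^2)$ and $\|\varkappa^k\|^2\le L_{\nabla f_1}^2L_{\vf_2}^2\|\vf_2(\vx^k)-\vu_2^k\|^2$ produces the dual recursion, with the mean-zero remainder collected into $\ddot r^{k+1}$.

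The genuinely new — and main — difficulty is the fourth ingredient: the recursion for $\|\vf_2(\vx^{k+1})-\vu_2^{k+1}\|^2$ under the update $\vu_2^{k+1}=(1-\tau_k)\vu_2^k+\tau_k\vG_2^{k+1}$. Here $\vf_2(\vx^{k+1})-\vu_2^{k+1}=(1-\tau_k)(\vf_2(\vx^k)-\vu_2^k)+(\vf_2(\vx^{k+1})-\vf_2(\vx^k))+\tau_k\Delta_{\vG_2}^{k+1}$; expanding the square, putting the cross term with the mean-zero $\Delta_{\vG_2}^{k+1}$ into $\dot r^{k+1}$, and bounding the remaining cross term by Young's inequality, $2\langle\vf_2(\vx^{k+1})-\vf_2(\vx^k),\vf_2(\vx^k)-\vu_2^k\rangle\le\tau_k\|\vf_2(\vx^k)-\vu_2^k\|^2+\tfrac1{\tau_k}\|\vf_2(\vx^{k+1})-\vf_2(\vx^k)\|^2\le\tau_k\|\vf_2(\vx^k)-\vu_2^k\|^2+L_{\vf_2}^2\tau_k\|\tilde\vd^k\|^2$, where I used $L_{\vf_2}$-Lipschitzness of $\vf_2$ and $\vx^{k+1}-\vx^k=\tau_k\tilde\vd^k$. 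Unlike in the multi-level proof, where the linearization term made this contribution $O(\tau_k^2\|\tilde\vd^k\|^2)$, here it is only $O(\tau_k\|\tilde\vd^k\|^2)$; splitting $\|\tilde\vd^k\|^2\le2\|\vd^k\|^2+2\|\tilde\vy^k-\vy^k\|^2$, the $\|\vd^k\|^2$ piece is first order in $\tau_k$ and (after multiplication by $\gamma$) must be dominated by the $-\beta\tau_k\|\vd^k\|^2$ coming from the $\eta$-term. This is precisely why $\beta$ has to be taken large in terms of the problem constants — the condition $\beta\ge6\rho L_{\nabla F}+(2\rho+\tfrac2{3\rho})L_{\nabla f_1}L_{\vf_2}^2$ — and why the $T=2$ algorithm fails to be parameter-free (cf.\ Remark~\ref{rmk:parameterfree}); the $\|\tilde\vy^k-\vy^k\|^2$ piece is absorbed via Lemma~\ref{lem:fw}, $\tfrac\beta2\|\tilde\vy^k-\vy^k\|^2\le H_k(\tilde\vy^k)-H_k(\vy^k)$, feeding the coefficient of $\sum_k\tau_k(H_k(\tilde\vy^k)-H_k(\vy^k))$.

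To finish, I would multiply the $\vf_2$-recursion by $\gamma$ and the dual recursion by $\alpha$, add them to the $[F-\eta]$ bound, and collect the coefficients of $\|\vd^k\|^2$, $\|\vf_2(\vx^k)-\vu_2^k\|^2$, $\|\nabla F(\vx^k)-\vz^k\|^2$ and $\|\tilde\vy^k-\vy^k\|^2$, using Young's inequality on the residual cross terms $\beta\|\vd^k\|\|\tilde\vy^k-\vy^k\|$, $\|\nabla F(\vx^k)-\vz^k\|\|\tilde\vy^k-\vy^k\|$ and $C_2\|\vd^k\|\|\vf_2(\vx^k)-\vu_2^k\|$ (the last split so that the $\|\vf_2(\vx^k)-\vu_2^k\|^2$ part folds into $\gamma$). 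Substituting $\alpha=\rho/L_{\nabla F}$, $\gamma=3\rho L_{\nabla f_1}$ and the stated lower bound on $\beta$ then makes the coefficients of these three squared quantities at most $-\rho L_{\nabla F}$, $-\rho L_{\nabla f_1}$ and $-\rho/L_{\nabla F}$ respectively, while the $\|\tilde\vy^k-\vy^k\|^2$ coefficient is rendered nonpositive by a further application of Lemma~\ref{lem:fw}. Identifying $\mathbf R_k$ with the remaining $\tau_k^2$-order and mean-zero terms (matching \eqref{eq:lemma-2-level-definition}) gives $W_{k+1}-W_k\le\tau_k\mathbf A_k+\mathbf R_k$ with $\mathbf A_k$ of the desired negative-definite-plus-gap form, and telescoping over $k=0,\dots,N$ yields the stated inequality. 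Verifying that the explicit numerical constants ($4$, $2(8\rho+1/\rho)L_{\nabla F}+24\rho L_{\nabla f_1}L_{\vf_2}^2$, and the analogues inside $\hat\sigma^2$) come out exactly as written is the routine but tedious bookkeeping part.
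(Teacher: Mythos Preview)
Your proposal is correct and follows essentially the same approach as the paper's own proof: the four ingredients (smoothness of $F$, smoothness of $\eta$ plus the optimality condition, the dual recursion for $\|\nabla F(\vx^{k})-\vz^k\|^2$, and the $\vf_2$-error recursion without linearization) are combined and balanced via Young's inequality exactly as in the paper, and you correctly isolate the one genuinely new point --- that the missing linearization degrades the inner-function error from $O(\tau_k^2\|\tilde\vd^k\|^2)$ to $O(\tau_k\|\tilde\vd^k\|^2)$, forcing the problem-dependent lower bound on $\beta$. The only cosmetic difference is that the paper uses Jensen's inequality for $\|\cdot\|^2$ in the $\vf_2$-recursion where you use Young's (these are equivalent here), and the paper obtains coefficients $-\tfrac{\rho L_{\nabla F}}{2},-\tfrac{\rho L_{\nabla f_1}}{2},-\tfrac{\rho}{2L_{\nabla F}}$ in $\mathbf A_k$ before telescoping (your ``at most $-\rho L_{\nabla F}$'' etc.\ overstates by a factor of $2$, but this is absorbed in the bookkeeping you flag).
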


\begin{proof}[Proof of Lemma~\ref{lem:merit-function}]

1. By the Lipschitzness of $\nabla F$ (Lemma \ref{lem:F-smooth}), we have
\begin{equation}\label{eq:lemma-proof-smoothness-F}
    \begin{split}
        &F (\vx^{k+1}) - F(\vx^{k}) \leq \tau_k\langle \nabla F(\vx^{k}), \vd^k \rangle+ \tau_k\|\nabla F(\vx^{k}) - \vz^{k}\|\| \tilde{\vy}^k - \vy^k \|  \\
        &\qquad + \tau_k \beta \|\vd^k\|\| \tilde{\vy}^k - \vy^k\|+ \tau_k\langle\vz^k+\beta\vd^k, \tilde{\vy}^k - \vy^k\rangle + \frac{L_{\nabla F}\tau_k^2  \|\tilde{\vd}^k\|^2}{2}.\\
    \end{split}
\end{equation}

2. Also, by the Lipschitzness of $\nabla \eta$ (Lemma \ref{lem:eta-smooth}) and the optimality condition of in the definition of $\vy^k$, we have
\begin{equation}\label{eq:lemma-proof-eta-diff}
    \begin{split}
        &\eta (\vx^k, \vz^k) - \eta (\vx^{k+1}, \vz^{k+1}) \leq  -\beta \tau_k \|\vd^k \|^2  + \tau_k \big\langle\vz^k + \beta \vd^k, \tilde{\vy}^k - \vy^k\big\rangle\\ 
        &\qquad - \tau_k\big\langle \vd^k, \nabla \vf_2(\vx^{k}) \nabla f_1(\vu_2^k) \big\rangle +\tau_k \big\langle \vd^k, \Delta^{k+1}\big\rangle+ \frac{L_{\nabla\eta}}{2} \bigg[\tau_k^2 \|\tilde{\vd}^k\|^2 + \| \vz^{k+1} - \vz^k \|^2\bigg].
    \end{split}
\end{equation}

3. In addition, by the Lipschitzness of $\vf_2$ and $\nabla \vf_1$, we have
\begin{equation}\label{eq:lemma-proof-dx-nabla-F-diff}
    \begin{split}
        \langle \vd^k, \nabla F (\vx^{k}) -\nabla \vf_2(\vx^{k}) \nabla f_1(\vu_2^k) \rangle& = \big\langle \vd^k, \nabla \vf_2(\vx^{k})^\top \big[\nabla \vf_1(\vf_2(\vx^k)) -\nabla \vf_1(\vu_2^k)\big] \big\rangle \\
        &\leq  L_{\nabla \vf_1}L_{\vf_2} \| \vd^k \| \| \vf_2(\vx^{k}) -  \vu_2^{k}  \|.
        \end{split}
\end{equation}

4. Moreover, by the update rule, we have
\begin{equation}\label{eq:lemma-proof-function-value-error-diff}
    \begin{split}
        &\| \vf_2(\vx^{k+1}) - \vu_2^{k+1} \|^2 =\| \vf_2(\vx^{k+1}) - \vf_2(\vx^k) + (1-\tau_k) [\vf_2(\vx^k)-\vu_2^k] + \tau_k \Delta_{\vG_2}^{k+1}\|^2\\
        & \qquad =\|(1-\tau_k) [\vf_2(\vx^k)-\vu_2^k] + \vf_2(\vx^{k+1}) - \vf_2(\vx^k)\|^2 + \tau_k^2 \|\Delta_{\vG_2}^{k+1}\|^2 +  \dot{r}^{k+1}\\
        &\qquad \leq (1-\tau_k) \| \vf_2(\vx^k)-\vu_2^k\|^2 + 2\tau_k L_{\vf_2}^2 (\| \vd^k \|^2 + \|\tilde{\vy}^k - \vy^k\|^2) + \tau_k^2 \|\Delta_{\vG_2}^{k+1}\|^2 +  \dot{r}^{k+1}
    \end{split}
\end{equation}        
where $\dot{r}^{k+1} := 2\tau_k \langle \Delta_{\vG_2}^{k+1}, \vf_2(\vx^{k+1}) - \vf_2(\vx^k) + (1-\tau_k) (\vf_2(\vx^k)-\vu_2^k) \rangle$ and the last inequality follows Jensen's inequality for the convex function $\|\cdot\|^2$ as well as
\begin{equation*}
     \left\|\frac{1}{\tau_k}\left[\vf_2(\vx^{k+1}) - \vf_2(\vx^k)\right]\right\|^2 \leq L_{\vf_2}^2 \| \tilde{\vd}^k \|^2\leq 2 L_{\vf_2}^2 (\| \vd^k \|^2 + \|\tilde{\vy}^k - \vy^k\|^2).
\end{equation*}

5. Defining
\begin{equation*}
        \ve^k := \frac{1}{\tau_k} \bigg[\nabla F(\vx^{k+1}) - \nabla F(\vx^{k})\bigg] + \nabla F(\vx^{k}) - \nabla \vf_2(\vx^{k}) \nabla \vf_1(\vu^k),
\end{equation*}
and by the update rule, we have 
\begin{equation}\label{eq:lemma-proof-dual-recursion}
    \begin{split}
        &\| \nabla  F(\vx^{k+1}) - \vz^{k+1}  \|^2 = \| (1-\tau_{k}) [ \nabla F(\vx^{k}) - \vz^{k}] + \tau_{k}[\ve^{k} + \Delta^{k+1}]\|^2\\
        &= \| (1-\tau_{k}) [ \nabla F(\vx^{k}) - \vz^{k}] + \tau_{k}\ve^{k}\|^2 + \tau_{k}^2\|\Delta^{k+1}\|^2+ \ddot{r}^{k+1}\\
        &\leq (1-\tau_{k}) \| \nabla F(\vx^{k}) -\vz^{k}\|^2 + \tau_{k} \| \ve^{k}\|^2+ \tau_{k}^2\|\Delta^{k+1}\|^2+ \ddot{r}^{k+1}\\
    \end{split}
\end{equation}
where $\ddot{r}^{k+1} := 2\tau_{k} \langle \Delta^{k}, (1-\tau_{k}) [ \nabla F(\vx^{k}) - \vz^{k}] + \tau_{k}\ve^{k} \rangle$. We can further upper bound the term $\|\ve^k \|^2$ by
\begin{equation}\label{eq:lemma-proof-e-bound}
    \begin{split}
        \| \ve^k \|^2 &\leq 2 L_{\nabla F}^2  \|\tilde{\vd}^k \|^2 + 2 L_{\nabla \vf_1}^2L_{\vf_2}^2 \|\vf_2(\vx^k) - \vu^k\|^2\\
        &\leq 4 L_{\nabla F}^2  (\|\vd^k \|^2 + \|\tilde{\vy}^k -\vy^k\|^2) + 2 L_{\nabla \vf_1}^2L_{\vf_2}^2 \|\vf_2(\vx^k) - \vu^k\|^2
    \end{split}
\end{equation}

6. By combing \eqref{eq:lemma-proof-smoothness-F}, \eqref{eq:lemma-proof-eta-diff}, \eqref{eq:lemma-proof-dx-nabla-F-diff}, \eqref{eq:lemma-proof-function-value-error-diff}, \eqref{eq:lemma-proof-dual-recursion}, 
\eqref{eq:lemma-proof-e-bound}, rearranging the terms, and noting that $\langle\vz^k+\beta\vd^k, \tilde{\vy}^k - \vy^k\rangle = H_k (\tilde{\vy}^k) - H_k(\vy^k) - (\beta/2)\| \tilde{\vy}^k - \vy^k \|^2$ and $\|\tilde{\vd}^k\|\leq D_{\setX}$, we obtain

\begin{equation}\label{eq:merit-func-diff}
    W_{k+1} -  W_k \leq \tau_k \mathbf{A}_k + \mathbf{R}_k
\end{equation}
where $\mathbf{R}_k$ is defined in \eqref{eq:lemma-2-level-definition} and
\begin{equation*}
\begin{split}
   \mathbf{A}_k := & \left(-\beta + 4\alpha L_{\nabla F}^2 + 2\gamma L_{\vf_2}^2\right) \| \vd^k \|^2   + \left(-\gamma + 2\alpha L_{\nabla \vf_1}^2 L_{\vf_2}^2\right) \| \vf_2(\vx^k) - \vu_2^k \|^2 \\
         &+L_{\nabla \vf_1}L_{\vf_2} \| \vd^k \| \| \vf_2(\vx^{k}) -  \vu_2^{k}  \|  -\alpha \| \nabla F(\vx^k) -\vz^k\|^2 \\
         &+ \left(\beta \| \vd^k \|  + \| \nabla F(\vx^k) -\vz^k\| \right) \|\tilde{\vy}^k - \vy^k\|\\
         & + \left(4\alpha L_{\nabla F}^2 + 2\gamma L_{\vf_2}^2 -\beta\right)\| \tilde{\vy}^k - \vy^k\|^2 +2 \left( H_k(\tilde{\vy}^k) - H_k(\vy^k)\right).
\end{split}
\end{equation*}
We then provide a simplified upper bound for $\mathbf{A}_k$. By the Young's inequality, we have
\begin{equation*}
    \begin{split}
        \beta \| \vd^k \| \| \tilde{\vy}^k - \vy^k \| &\leq \frac{\beta}{4} \| \vd^k \|^2 + \beta \|   \tilde{\vy}^k - \vy^k\|^2,\\
        \| \nabla F(\vx^k) - \vz^k \| \|\tilde{\vy}^k - \vy^k \| &\leq \frac{\alpha}{2} \| \nabla F(\vx^k) - \vz^k \|^2 + \frac{1}{2\alpha} \|\tilde{\vy}^k - \vy^k \|^2.
    \end{split}
\end{equation*}
In addition, we reparametrize $\alpha = \frac{\rho}{L_{\nabla F}}$. Noting that by Lemma \ref{lem:F-smooth} with $T=2$
$$\frac{L_{\nabla \vf_1}^2 L_{\vf_2}^2}{L_{\nabla F}} = \frac{L_{\nabla \vf_1}^2 L_{\vf_2}^2}{L_{\nabla \vf_1} L_{\vf_2}^2 + L_{\vf_1} L_{\nabla \vf_2}} \leq L_{\nabla \vf_1},$$
we therefore have 
\begin{equation*}
    \begin{split}
         \mathbf{A}_k \leq & \left(-\frac{3\beta}{4} + 4\rho L_{\nabla F}  + 2\gamma L_{\vf_2}^2\right) \| \vd^k \|^2   + \left(-\gamma + 2 \rho L_{\nabla \vf_1}\right) \| \vf_2(\vx^k) - \vu_2^k \|^2 \\
         &+L_{\nabla \vf_1}L_{\vf_2} \| \vd^k \| \| \vf_2(\vx^{k}) -  \vu_2^{k}  \| -\frac{\rho}{2L_{\nabla F}}\| \nabla F(\vx^k) -\vz^k\|^2 \\
         &+ \left(4\rho L_{\nabla F} + 2\gamma L_{\vf_2}^2 + \frac{L_{\nabla F}}{2\rho}\right)\| \tilde{\vy}^k - \vy^k\|^2 +2 \left( H_k(\tilde{\vy}^k) - H_k(\vy^k)\right) 
    \end{split}
\end{equation*}
Then, setting $\gamma = 3\rho L_{\nabla f_1}$ and $\beta \geq 6\rho L_{\nabla F} + (2\rho +\frac{2}{3\rho})L_{\nabla f_1}L_{\vf_2}^2$, we can obtain
\begin{equation*}
\begin{split}
     &\left(-\frac{3\beta}{4} + 4\rho L_{\nabla F} + 2\gamma L_{\vf_2}^2\right) \| \vd^k \|^2   + \left(-\gamma + 2 \rho L_{\nabla \vf_1}\right) \| \vf_2(\vx^k) - \vu_2^k \|^2 \\
     &\qquad + L_{\nabla f_1}L_{\vf_2} \|\vd^k \|\| \vf_2(\vx^k) - \vu_2^k \| \leq - \frac{\rho L_{\nabla F}}{2} \| \vd^k \|^2 -  \frac{\rho L_{\nabla f_1}}{2} \| \vf_2(\vx^k) - \vu_2^k\|^2
\end{split}
\end{equation*}
Also, we have $(\beta/2) \| \tilde{\vy}^k - \vy^k\|^2 \leq H_k(\tilde{\vy}^k) - H_k(\vy^k)$. Therefore, we can further bound $\mathbf{A}_k$ by
\begin{equation}\label{eq:lemma-proof-Ak-bound}
    \begin{split}
        \mathbf{A}_k \leq & - \frac{\rho L_{\nabla F}}{2} \| \vd^k \|^2 -  \frac{\rho L_{\nabla f_1}}{2} \| \vg(\vx^k) - \vu^k\|^2 -\frac{\rho}{2 L_{\nabla F}} \| \nabla F(\vx^k) -\vz^k\|^2 \\
        & + \bigg(2+ \frac{(8\rho + 1/\rho) L_{\nabla F} + 12 \rho L_{\nabla \vf_1} L_{\vf_2}^2 }{\beta}\bigg)\left( H_k(\tilde{\vy}^k) - H_k(\vy^k)\right).
    \end{split}
\end{equation}
Telescoping \eqref{eq:merit-func-diff} together with \eqref{eq:lemma-proof-Ak-bound}, we get
\begin{equation*}
\begin{split}
     &\rho \sum_{k=0}^{N}\tau_k \bigg(L_{\nabla F} \| \vd^k \|^2 +  L_{\nabla \vf_1}\| \vf_2(\vx^k) - \vu_2^k \|^2 +  \frac{1}{L_{\nabla F}}\| \nabla F(\vx^k) - \vz^k\|^2\bigg) \\
     &\leq 2W_0 +  2\sum_{k=0}^{N}\mathbf{R}_k + \bigg(4+ \frac{2(8\rho + 1/\rho) L_{\nabla F} + 24 \rho L_{\nabla \vf_1} L_{\vf_2}^2 }{\beta}\bigg) \sum_{k=0}^{N} \tau_k \left( H_k(\tilde{\vy}^k) - H_k(\vy^k)\right) 
\end{split}
\end{equation*}
\end{proof}

\begin{proof}[Proof of Theorem~\ref{thm:two-level}, part (a)] The proof follows the same arguments in the proof of Theorem \ref{thm:main-T-level}. Note that by Lemma \ref{lem:merit-function} and given values of $\alpha, \gamma$ in \eqref{eq:alpha-gamma-values}, we obtain
\begin{equation*}
\begin{split}
   &\rho\sum_{k=1}^{N}\tau_k \left[ L_{\nabla F}\| \vd^k \|^2  + L_{\nabla \vf_1}\| \vf_{2}(\vx^k) - \vu_{2}^k \|^2   +\frac{1}{L_{\nabla F}}\| \nabla F(\vx^k) -\vz^k\|^2 \right]\leq 2W_{\alpha,\gamma}(\vx^0, \vz^0, \vu^0) \\
   &\quad + 2\sum_{k=0}^{N} \mathbf{R}_k + \bigg(4+ \frac{2(8\rho + 1/\rho) L_{\nabla F} + 24 \rho L_{\nabla \vf_1} L_{\vf_2}^2 }{\beta}\bigg)\sum_{k=0}^{N}\tau_k\left( H_k(\tilde{\vy}^k) - H_k(\vy^k)\right).
\end{split}
\end{equation*}
Noting that 
\begin{equation*}
\E[\mathbf{R}_k | \setF_k] =  \tau_k^2 \left[\frac{L_{\nabla F}+ L_{\nabla\eta}}{2} D_{\setX}^2  + \gamma \sigma_{\vG_2}^2 + (\alpha + 2L_{\eta}) \hat{\sigma}_{\vJ_1}^2  \hat{\sigma}_{\vJ_2}^2\right]:=\tau_k^2 \hat{\sigma}^2,
\end{equation*}
and taking expectation of both sides, we can complete the proof with the same arguments in the proof of Theorem \ref{thm:main-T-level}. The constants $\mathcal{C}_1$ and $\mathcal{C}_2$ turn out to be
\begin{equation}\label{eq:definition-C1-C2}
\begin{split}
    &\mathcal{C}_1 = 4\left(\frac{\beta^2}{\rho L_{\nabla F}} + \frac{L_{\nabla F}}{\rho}\right)\bigg\{W_{\alpha, \gamma}(\vx^0, \vz^0, \vu^0) + \hat{\sigma}^2 \\
    &\qquad \qquad \qquad \qquad + 4 D_{\setX}^2 (1+\delta) \left[2\beta + (8\rho + \frac{1}{\rho})L_{\nabla F} + 12\rho L_{\nabla \vf_1} L_{\vf_2}^2)\right]\bigg\},\\
    &\mathcal{C}_2 = \frac{2}{\rho L_{\nabla \vf_1}}\bigg\{W_{\alpha, \gamma}(\vx^0, \vz^0, \vu^0) + \hat{\sigma}^2 \\
    &\qquad \qquad \qquad \qquad + 4 D_{\setX}^2 (1+\delta) \left[2\beta + (8\rho + \frac{1}{\rho})L_{\nabla F} + 12\rho L_{\nabla \vf_1} L_{\vf_2}^2)\right]\bigg\}.
\end{split}
\end{equation}
\end{proof}

\subsection{Proof of Theorem~\ref{thm:two-level} for $T=1$}

To show the rate of convergence for Algorithm \ref{alg:CG-ASA},  we leverage the following merit function: 
\begin{equation}\label{eq:definition-merit-fucntion-1-level}
    W_{\alpha}(\vx^k, \vz^k, \vu^k) = F(\vx^k) -F^\star - \eta (\vx^{k}, \vz^k) + \alpha\| \nabla F(\vx^k) - \vz^k \|^2,
\end{equation}
where $\alpha>0$, $\eta(\cdot, \cdot)$ is defined in \eqref{eq:definition-eta}.

\begin{lemma}
\label{lem:merit-function-1-level}
Let $\{\vx^k, \vz^k\}_{k\geq 0}$ be the sequence generated by Algorithm \ref{alg:CG-ASA} with $\beta_k\equiv \beta>0$ and the merit function $W_{\alpha}(\cdot, \cdot)$ be defined in \eqref{eq:definition-merit-fucntion-1-level} with $\alpha = \frac{\beta}{4L_{\nabla F}^2}$. Under Assumptions \ref{aspt:lipschitz-T-level} with $T=1$, we have $\forall N \geq 0$
\begin{equation*}
\begin{split}
     &\beta \sum_{k=0}^{N}\tau_k \bigg(\| \vd^k \|^2 +  \frac{1}{2L_{\nabla F}^2}\| \nabla F(\vx^k) - \vz^k\|^2\bigg) \\
     &\leq 4W_{\alpha}(\vx^0, \vu^0) +  4\sum_{k=0}^{N}\mathbf{R}_k + \bigg(12+ \frac{16L_{\nabla F}^2 }{\beta^2}\bigg) \sum_{k=0}^{N} \tau_k \left( H_k(\tilde{\vy}^k) - H_k(\vy^k)\right) 
\end{split}
\end{equation*}
where $\vd^k := \vy^k - \vx^k$, $H_k(\cdot), \vy^k$ are defined in \eqref{eq:definition-etak-yk}, $\Delta^{k+1}:= \nabla F (\vx^k) - \vJ_1^{k+1}$, and 
\begin{equation}\label{eq:lemma-1-level-definition}
\begin{split}
    \mathbf{R}_k :=& \tau_k^2 \left[\frac{L_{\nabla F}+ L_{\nabla\eta}}{2} D_{\setX}^2  + \alpha \| \Delta^{k+1} \|^2\right]+ \frac{L_{\eta}}{2} \|\vz^{k+1} -\vz^k \|^2\\
    &\qquad + \tau_k \langle \vd^k , \Delta^{k+1}\rangle + \alpha r^{k+1},\\
   r^{k+1} :=& 2\tau_{k} \langle \Delta^{k+1}, (1-\tau_{k}) [ \nabla F(\vx^{k}) - \vz^{k}] + \nabla F(\vx^{k+1}) - \nabla F(\vx^{k}) \rangle.
\end{split}
\end{equation}
\end{lemma}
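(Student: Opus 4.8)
The plan is to mirror the proof of Lemma~\ref{lem:merit-function}, specialized to $T=1$, where the argument simplifies substantially: since $\vu_2^k\equiv\vx^k$ in Algorithm~\ref{alg:CG-ASA}, the chain-approximation error $\varkappa^k=\nabla F(\vx^k)-\nabla\vf_1(\vu_2^k)$ vanishes identically, so there is no inner-function-value term in the merit function~\eqref{eq:definition-merit-fucntion-1-level} (hence no $\gamma$ constant) and $\vJ_1^{k+1}=\nabla F(\vx^k)-\Delta^{k+1}$ is a direct unbiased estimator of $\nabla F(\vx^k)$. First I would derive a one-step inequality $W_{k+1}-W_k\le\tau_k\mathbf{A}_k+\mathbf{R}_k$, with $W_k$ the merit function~\eqref{eq:definition-merit-fucntion-1-level} and $\mathbf{R}_k$ as in~\eqref{eq:lemma-1-level-definition}, and then telescope over $k=0,\dots,N$.

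The one-step bound is assembled from three pieces, exactly as in~\eqref{eq:lemma-proof-smoothness-F}--\eqref{eq:lemma-proof-dual-recursion} but with the $T=1$ simplifications. \emph{(i)} By $L_{\nabla F}$-smoothness of $F$ (Lemma~\ref{lem:F-smooth}) and the update~\eqref{eq:update-x-T-level}, splitting $\tilde{\vy}^k-\vx^k=\vd^k+(\tilde{\vy}^k-\vy^k)$ with $\vd^k=\vy^k-\vx^k$, I would bound $F(\vx^{k+1})-F(\vx^k)$ by $\tau_k\langle\nabla F(\vx^k),\vd^k\rangle+\tau_k\langle\vz^k+\beta\vd^k,\tilde{\vy}^k-\vy^k\rangle+\tau_k(\beta\|\vd^k\|+\|\nabla F(\vx^k)-\vz^k\|)\|\tilde{\vy}^k-\vy^k\|+\tfrac{L_{\nabla F}}{2}\tau_k^2\|\tilde{\vd}^k\|^2$. \emph{(ii)} By $L_{\nabla\eta}$-smoothness of $\eta$ (Lemma~\ref{lem:eta-smooth}), using the partial gradients $\nabla_\vx\eta=-\vz^k-\beta\vd^k$, $\nabla_\vz\eta=\vd^k$, the updates~\eqref{eq:update-x-T-level} and the $\vz$-update of Algorithm~\ref{alg:CG-ASA}, and the subproblem optimality condition $\langle\vz^k+\beta\vd^k,\vx^k-\vy^k\rangle\ge0$ (which gives $\langle\vz^k,\vd^k\rangle+\beta\|\vd^k\|^2\le0$), I would bound $\eta(\vx^k,\vz^k)-\eta(\vx^{k+1},\vz^{k+1})$; the term $-\tau_k\langle\vd^k,\vJ_1^{k+1}\rangle$ arises, and combining it with $+\tau_k\langle\nabla F(\vx^k),\vd^k\rangle$ from \emph{(i)} leaves the mean-zero term $\tau_k\langle\vd^k,\Delta^{k+1}\rangle$, which I fold into $\mathbf{R}_k$. \emph{(iii)} Writing $\nabla F(\vx^{k+1})-\vz^{k+1}=(1-\tau_k)[\nabla F(\vx^k)-\vz^k]+\tau_k[\ve^k+\Delta^{k+1}]$ with $\tau_k\ve^k=\nabla F(\vx^{k+1})-\nabla F(\vx^k)$ (the extra error term of the general case disappears because $\vu_2^k=\vx^k$), expanding the square and using $\|\ve^k\|^2\le L_{\nabla F}^2\|\tilde{\vd}^k\|^2\le2L_{\nabla F}^2(\|\vd^k\|^2+\|\tilde{\vy}^k-\vy^k\|^2)$ yields the recursion for $\|\nabla F(\vx^k)-\vz^k\|^2$ with martingale term $r^{k+1}$. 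Adding $\alpha$ times \emph{(iii)} to \emph{(i)}$+$\emph{(ii)}, using $\langle\vz^k+\beta\vd^k,\tilde{\vy}^k-\vy^k\rangle=H_k(\tilde{\vy}^k)-H_k(\vy^k)-\tfrac{\beta}{2}\|\tilde{\vy}^k-\vy^k\|^2$ (quadratic structure of $H_k$) and $\|\tilde{\vd}^k\|^2\le D_\setX^2$, and collecting the zero-mean and second-moment terms into $\mathbf{R}_k$ as in~\eqref{eq:lemma-1-level-definition}, produces $W_{k+1}-W_k\le\tau_k\mathbf{A}_k+\mathbf{R}_k$ with
\begin{equation*}
\begin{split}
\mathbf{A}_k=&\;(-\beta+2\alpha L_{\nabla F}^2)\|\vd^k\|^2-\alpha\|\nabla F(\vx^k)-\vz^k\|^2\\
&\;+\big(\beta\|\vd^k\|+\|\nabla F(\vx^k)-\vz^k\|\big)\|\tilde{\vy}^k-\vy^k\|+(2\alpha L_{\nabla F}^2-\beta)\|\tilde{\vy}^k-\vy^k\|^2\\
&\;+2\big(H_k(\tilde{\vy}^k)-H_k(\vy^k)\big).
\end{split}
\end{equation*}

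Next I would apply Young's inequality, $\beta\|\vd^k\|\|\tilde{\vy}^k-\vy^k\|\le\tfrac{\beta}{4}\|\vd^k\|^2+\beta\|\tilde{\vy}^k-\vy^k\|^2$ and $\|\nabla F(\vx^k)-\vz^k\|\|\tilde{\vy}^k-\vy^k\|\le\tfrac{\alpha}{2}\|\nabla F(\vx^k)-\vz^k\|^2+\tfrac{1}{2\alpha}\|\tilde{\vy}^k-\vy^k\|^2$, and then invoke the choice $\alpha=\beta/(4L_{\nabla F}^2)$, which turns the coefficient of $\|\vd^k\|^2$ into exactly $-\beta/4$ and that of $\|\nabla F(\vx^k)-\vz^k\|^2$ into $-\beta/(8L_{\nabla F}^2)$, while the residual multiplier of $\|\tilde{\vy}^k-\vy^k\|^2$ becomes $\tfrac{\beta}{2}+\tfrac{2L_{\nabla F}^2}{\beta}$. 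Charging this last term to the subproblem gap via the left inequality of Lemma~\ref{lem:fw}, namely $\tfrac{\beta}{2}\|\tilde{\vy}^k-\vy^k\|^2\le H_k(\tilde{\vy}^k)-H_k(\vy^k)$, gives $\mathbf{A}_k\le-\tfrac{\beta}{4}\|\vd^k\|^2-\tfrac{\beta}{8L_{\nabla F}^2}\|\nabla F(\vx^k)-\vz^k\|^2+\big(3+\tfrac{4L_{\nabla F}^2}{\beta^2}\big)\big(H_k(\tilde{\vy}^k)-H_k(\vy^k)\big)$. Substituting back, multiplying by $4$, and telescoping over $k=0,\dots,N$ using $W_{N+1}\ge0$ (which holds since $F\ge F^\star$, $\eta\le0$ by~\eqref{eq:definition-eta}, and the last term of $W$ is a square) produces precisely the stated inequality.

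I do not expect a genuine obstacle here: the $T=1$ case is strictly easier than Lemmas~\ref{lem:merit-function-T-level} and~\ref{lem:merit-function} because there is no inner-function bias to propagate. The only points demanding care are bookkeeping: keeping the Frank--Wolfe inexactness $\|\tilde{\vy}^k-\vy^k\|$ separate from $\|\vd^k\|$ throughout and consistently re-expressing it through the gap $H_k(\tilde{\vy}^k)-H_k(\vy^k)$ with the two-sided estimate of Lemma~\ref{lem:fw}, and verifying that with $\alpha=\beta/(4L_{\nabla F}^2)$ the coefficient of $\|\vd^k\|^2$ stays strictly negative so that every leftover positive quantity can be absorbed into the gap term without inflating the constant beyond $12+16L_{\nabla F}^2/\beta^2$.
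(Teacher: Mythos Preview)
Your proposal is correct and follows essentially the same approach as the paper's own proof: the paper explicitly states that the argument is ``a simplified version of the proof of Lemma~\ref{lem:merit-function},'' and it carries out exactly the three-piece decomposition (smoothness of $F$, smoothness of $\eta$ with the subproblem optimality condition, and the recursion for $\|\nabla F(\vx^k)-\vz^k\|^2$), arrives at the same $\mathbf{A}_k$, applies the same two Young's inequalities, substitutes $\alpha=\beta/(4L_{\nabla F}^2)$, absorbs $\|\tilde{\vy}^k-\vy^k\|^2$ into the gap via $\tfrac{\beta}{2}\|\tilde{\vy}^k-\vy^k\|^2\le H_k(\tilde{\vy}^k)-H_k(\vy^k)$, and telescopes. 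Your justification that $W_{N+1}\ge 0$ (needed to drop the final term after telescoping) is a detail the paper leaves implicit but is indeed correct.
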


\begin{proof} The proof is a essentially a simplified version of the proof of Lemma \ref{lem:merit-function}. Hence, we skip some arguments already presented earlier. 

1. By the Lipschitzness of $\nabla F$, we have
\begin{equation}\label{eq:lemma-proof-smoothness-F-1-level}
    \begin{split}
        &F (\vx^{k+1}) - F(\vx^{k}) \leq \tau_k\langle \nabla F(\vx^{k}), \vd^k \rangle+ \tau_k\|\nabla F(\vx^{k}) - \vz^{k}\|\| \tilde{\vy}^k - \vy^k \|  \\
        &\qquad + \tau_k \beta \|\vd^k\|\| \tilde{\vy}^k - \vy^k\|+ \tau_k\langle\vz^k+\beta\vd^k, \tilde{\vy}^k - \vy^k\rangle + \frac{L_{\nabla F}\tau_k^2  \|\tilde{\vd}^k\|^2}{2}.\\
    \end{split}
\end{equation}

2. Also, by the lipschitzness of $\nabla \eta$ (Lemma \ref{lem:eta-smooth}) and the optimality condition of in the definition of $\vy^k$, we have
\begin{equation}\label{eq:lemma-proof-eta-diff-1-level}
    \begin{split}
        &\eta (\vx^k, \vz^k) - \eta (\vx^{k+1}, \vz^{k+1}) \leq  -\beta \tau_k \|\vd^k \|^2  + \tau_k \big\langle\vz^k + \beta \vd^k, \tilde{\vy}^k - \vy^k\big\rangle\\ 
        &\qquad - \tau_k\big\langle \vd^k, \nabla F(\vx^{k}) \big\rangle +\tau_k \big\langle \vd^k, \Delta^{k+1}\big\rangle+ \frac{L_{\nabla\eta}}{2} \bigg[\tau_k^2 \|\tilde{\vd}^k\|^2 + \| \vz^{k+1} - \vz^k \|^2\bigg].
    \end{split}
\end{equation}

3. By the update rule, we have 
\begin{equation}\label{eq:lemma-proof-dual-recursion-1-level}
    \begin{split}
        &\| \nabla  F(\vx^{k+1}) - \vz^{k+1}  \|^2 = \| (1-\tau_{k}) [ \nabla F(\vx^{k}) - \vz^{k}] +\nabla F(\vx^{k+1}) - \nabla F(\vx^{k})+ \tau_{k} \Delta^{k+1}\|^2\\
        &= \| (1-\tau_{k}) [ \nabla F(\vx^{k}) - \vz^{k}] + \nabla F(\vx^{k+1}) - \nabla F(\vx^{k})\|^2 + \tau_{k}^2\|\Delta^{k+1}\|^2+ r^{k+1}\\
        &\leq (1-\tau_{k}) \| \nabla F(\vx^{k}) -\vz^{k}\|^2 + \frac{1}{\tau_{k}} \| \nabla F(\vx^{k+1}) - \nabla F(\vx^{k})\|^2+ \tau_{k}^2\|\Delta^{k+1}\|^2+ r^{k+1}\\
        &\leq (1-\tau_{k}) \| \nabla F(\vx^{k}) -\vz^{k}\|^2 + \tau_{k} L_{\nabla F}^2\| \tilde{\vd}^k\|^2+ \tau_{k}^2\|\Delta^{k+1}\|^2+ r^{k+1}\\
         &\leq (1-\tau_{k}) \| \nabla F(\vx^{k}) -\vz^{k}\|^2 + 2\tau_{k} L_{\nabla F}^2(\| \vd^k\|^2+\|\tilde{\vy}^k - \vy^k\|^2)+ \tau_{k}^2\|\Delta^{k+1}\|^2+ r^{k+1}\\
    \end{split}
\end{equation}
where $r^{k+1} := 2\tau_{k} \langle \Delta^{k}, (1-\tau_{k}) [ \nabla F(\vx^{k}) - \vz^{k}] + \nabla F(\vx^{k+1}) - \nabla F(\vx^{k}) \rangle$. 

4. By combing \eqref{eq:lemma-proof-smoothness-F-1-level}, \eqref{eq:lemma-proof-eta-diff-1-level} \eqref{eq:lemma-proof-dual-recursion-1-level}, rearranging the terms, and noting that $\langle\vz^k+\beta\vd^k, \tilde{\vy}^k - \vy^k\rangle = H_k (\tilde{\vy}^k) - H_k(\vy^k) - (\beta/2)\| \tilde{\vy}^k - \vy^k \|^2$ and $\|\tilde{\vd}^k\|\leq D_{\setX}$, we obtain

\begin{equation}\label{eq:merit-func-diff-1-level}
    W_{k+1} -  W_k \leq \tau_k \mathbf{A}_k + \mathbf{R}_k
\end{equation}
where $\mathbf{R}_k$ is defined in \eqref{eq:lemma-1-level-definition} and
\begin{equation*}
\begin{split}
   \mathbf{A}_k := & \left(-\beta + 2\alpha L_{\nabla F}^2\right) \| \vd^k \|^2  -\alpha \| \nabla F(\vx^k) -\vz^k\|^2 + \left(\beta \| \vd^k \|  + \| \nabla F(\vx^k) -\vz^k\| \right) \|\tilde{\vy}^k - \vy^k\|\\
         & + \left(2\alpha L_{\nabla F}^2 -\beta\right)\| \tilde{\vy}^k - \vy^k\|^2 +2 \left( H_k(\tilde{\vy}^k) - H_k(\vy^k)\right).
\end{split}
\end{equation*}
We then provide a simplified upper bound for $\mathbf{A}_k$. By the Young's inequality, we have
\begin{equation*}
    \begin{split}
        \beta \| \vd^k \| \| \tilde{\vy}^k - \vy^k \| &\leq \frac{\beta}{4} \| \vd^k \|^2 + \beta \|   \tilde{\vy}^k - \vy^k\|^2,\\
        \| \nabla F(\vx^k) - \vz^k \| \|\tilde{\vy}^k - \vy^k \| &\leq \frac{\alpha}{2} \| \nabla F(\vx^k) - \vz^k \|^2 + \frac{1}{2\alpha} \|\tilde{\vy}^k - \vy^k \|^2.
    \end{split}
\end{equation*}
In addition,  setting $\alpha = \frac{\beta}{4L_{\nabla F}^2}$ and noting $(\beta/2) \| \tilde{\vy}^k - \vy^k\|^2 \leq H_k(\tilde{\vy}^k) - H_k(\vy^k)$, we have 
\begin{equation}\label{eq:lemma-proof-Ak-bound-1level}
    \begin{split}
         \mathbf{A}_k \leq & -\frac{\beta}{4} \| \vd^k \|^2 -\frac{\beta}{8 L_{\nabla F}^2}\| \nabla F(\vx^k) -\vz^k\|^2 +\left(3+\frac{4L_{\nabla F}^2}{\beta^2}\right) \left( H_k(\tilde{\vy}^k) - H_k(\vy^k)\right) \\
    \end{split}
\end{equation}
Telescoping \eqref{eq:merit-func-diff-1-level} together with \eqref{eq:lemma-proof-Ak-bound-1level}, we get
\begin{equation*}
\begin{split}
     &\beta \sum_{k=0}^{N}\tau_k \bigg(\| \vd^k \|^2 +  \frac{1}{2L_{\nabla F}^2}\| \nabla F(\vx^k) - \vz^k\|^2\bigg) \\
     &\leq 4W_{\alpha}(\vx^0, \vu^0) +  4\sum_{k=0}^{N}\mathbf{R}_k + \bigg(12+ \frac{16L_{\nabla F}^2 }{\beta^2}\bigg) \sum_{k=0}^{N} \tau_k \left( H_k(\tilde{\vy}^k) - H_k(\vy^k)\right) 
\end{split}
\end{equation*}
\end{proof}
\begin{proof}[Proof of Theorem~\ref{thm:two-level}, part (b)] Given Lemma~\ref{lem:merit-function-1-level}, the proof follows the same arguments as in the proof of Theorem \ref{thm:main-T-level}. The constant $\mathcal{C}_3$ turns out to be
\begin{equation}\label{eq:definition-C3}
\begin{split}
    \mathcal{C}_3 = 8\left(\beta + \frac{2L_{\nabla F}^2}{\beta}\right)\bigg\{W_{\alpha}(\vx^0, \vu^0) + D_{\setX}^2 \left[(1+\delta)\left(12\beta + \frac{16L_{\nabla F}^2}{\beta}\right) + \frac{L_{\nabla F} + L_{\nabla\eta}}{2}\right]\\
    + \alpha \sigma_{\vJ_1}^2 + 2L_{\eta}\hat{\sigma}_{\vJ_1}^2\bigg\}
\end{split}.
\end{equation}
\end{proof}

\section{High-Probability Convergence for $T=1$}\label{sec:proof-high-prob}

\subsection{Preliminaries}

We provide a short review of sub-gaussian and sub-exponential random variables for completeness.

\begin{definition}\label{def:sub-Gaussian} {\normalfont (Sub-gaussian and Sub-exponential)}
\begin{itemize}[leftmargin=2em]
    \item[(a)] A random variable $X$ is $K$-sub-gaussian if there exists $K>0$ such that $\E [\exp(X^2/K^2)] \leq 2$. 
The sub-gaussian norm of $X$, denoted $\|X\|_{\psi_2}$, is defined to be the smallest $K$. That is to say,
$$\|X\|_{\psi_2} = \inf\left\{ t>0: \E[\exp(X^2/t^2)]\leq 2 \right\}.$$
    \item[(b)] A random variable $X$ is $K$-sub-exponential if there exists $K>0$ such that $\E [\exp(|X|/K)] \leq 2$. 
The sub-exponential norm of $X$, denoted $\|X\|_{\psi_1}$, is defined to be the smallest $K$. That is to say,
$$\|X\|_{\psi_1} = \inf\left\{ t>0: \E[\exp(|X|/t)]\leq 2 \right\}.$$
\end{itemize}
\end{definition}

The above characterization is based on the so-called orlicz norm of a random variable. There are equivalent definitions of sub-gaussian and sub-exponential random variables. We refer readers to Proposition 2.5.2 and Proposition 2.7.1 in \cite{vershynin2018high}. In particular, we will also use another definition of sub-gaussian random variables based on the moment generating function given below.

\begin{lemma}\label{lem:mgf-subgaussian}
{\normalfont (Sub-gaussian M.G.F. \cite{vershynin2018high})}
If a random variable $X$ is $K$-sub-gaussian with $\E[X]=0$, then $\E[\exp(\lambda X)] \leq \exp(c \lambda^2 K^2)~ \forall \lambda \in \reals$, where $cx$ is a absolute constant.
\end{lemma}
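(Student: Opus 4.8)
The plan is to exploit the Orlicz-norm characterization of sub-gaussianity from Definition~\ref{def:sub-Gaussian}(a), namely $\E[\exp(X^2/K^2)] \leq 2$, together with two elementary pointwise inequalities, handling small and large values of $\lambda$ separately. The mean-zero hypothesis will be essential in the small-$\lambda$ regime, since without it the moment generating function need not approach $1$ as $\lambda \to 0$, whereas the target bound $\exp(c\lambda^2 K^2)$ does.

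First I would invoke the elementary numerical inequality $e^t \leq t + e^{t^2}$, valid for all $t \in \reals$ (one checks this by verifying that $g(t) := e^{t^2} + t - e^t$ satisfies $g(0)=g'(0)=0$ and $g''(t) = (2+4t^2)e^{t^2} - e^t > 0$, so that $g \geq 0$). Applying it pointwise with $t = \lambda X$ and taking expectations yields
\begin{equation*}
\E[\exp(\lambda X)] \leq \lambda \E[X] + \E[\exp(\lambda^2 X^2)] = \E[\exp(\lambda^2 X^2)],
\end{equation*}
where the last equality uses $\E[X]=0$. Thus the problem reduces to bounding $\E[\exp(s X^2)]$ for $s = \lambda^2 \geq 0$. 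For the regime $sK^2 \leq 1$ (equivalently $|\lambda| \leq 1/K$) I would use convexity of $u \mapsto e^u$: writing $sX^2 = (sK^2)\cdot(X^2/K^2)$ with $sK^2 \in [0,1]$ gives, pointwise, $\exp(sX^2) \leq (1-sK^2) + sK^2\exp(X^2/K^2)$, so taking expectations and using the definition,
\begin{equation*}
\E[\exp(sX^2)] \leq 1 - sK^2 + 2sK^2 = 1 + sK^2 \leq \exp(sK^2).
\end{equation*}
Combined with the first step, this yields $\E[\exp(\lambda X)] \leq \exp(\lambda^2 K^2)$ for all $|\lambda|\leq 1/K$, i.e. the claim with $c=1$ on this range.

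Finally, for $|\lambda| > 1/K$ I would abandon the mean-zero reduction and instead apply Young's inequality $\lambda X \leq \tfrac{1}{2}\lambda^2 K^2 + \tfrac{1}{2K^2}X^2$, giving $\E[\exp(\lambda X)] \leq \exp(\tfrac{1}{2}\lambda^2 K^2)\,\E[\exp(X^2/(2K^2))]$; the second factor is at most the constant $\exp(1/2)$ by the convexity estimate above with $s = 1/(2K^2)$. Since $\lambda^2 K^2 > 1$ on this range, one has $\tfrac12\lambda^2 K^2 + \tfrac12 \leq \lambda^2 K^2$, so the constant factor is absorbed and again $\E[\exp(\lambda X)] \leq \exp(\lambda^2 K^2)$. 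Taking $c=1$ (or any absolute constant $\geq 1$) then covers all $\lambda \in \reals$. The main subtlety — and the reason the two regimes cannot be collapsed into a single Young's-inequality bound — is precisely that the target tends to $1$ as $\lambda \to 0$, which forces the mean-zero cancellation via the $e^t \leq t + e^{t^2}$ trick for small $\lambda$; verifying that numerical inequality and checking that the large-$\lambda$ constant is genuinely absorbable (using $\lambda^2 K^2 > 1$) are the only steps requiring care.
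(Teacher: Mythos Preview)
Your proof is correct: the numerical inequality $e^t \le t + e^{t^2}$ is valid (your second-derivative check shows $g$ has a strict global minimum of $0$ at $t=0$), the convexity interpolation gives $\E[\exp(sX^2)] \le 1 + sK^2$ for $sK^2 \le 1$, and the Young's-inequality argument with the absorption $\tfrac12\lambda^2K^2 + \tfrac12 \le \lambda^2K^2$ on $|\lambda|K > 1$ closes the large-$\lambda$ case, yielding the bound with $c=1$.

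There is nothing to compare against: the paper does not supply its own proof of this lemma but simply cites it as a standard fact from \cite{vershynin2018high}. Your argument is essentially the textbook proof found there (Proposition~2.5.2, implication (iv)$\Rightarrow$(v)), so the approaches coincide.
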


In the high probability results we show for the special case with $T=1$, we handle the tail probability for two terms involving the mean-zero noise with sub-gaussian norm, $\|\Delta^{k+1}\|^2$ and $\langle \Delta^{k+1}, \Lambda^{k}\rangle$, where $(\Delta^k)$ and $(\Lambda^k)$ are adapted to $(\setF_k)$. Our proof leverages the following two lemmas to control the probability of these two terms being too large.

\begin{lemma}
 {\normalfont (Sub-exponential is sub-gaussian squared \cite{vershynin2018high})} A random variable $X$
is sub-gaussian if and only if $X^2$
is sub-exponential. Moreover, $\|X^2\|_{\psi_1} = \| X\|_{\psi_2}^2$.
\end{lemma}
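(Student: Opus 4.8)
The plan is to unwind both Orlicz-norm definitions from Definition~\ref{def:sub-Gaussian} and relate them through the change of variables $t=s^2$. First I would record that, for any random variable $X$,
\[
\|X\|_{\psi_2} = \inf\{s>0:\, \E[\exp(X^2/s^2)]\le 2\},
\]
and, applying the sub-exponential norm to the nonnegative random variable $X^2$ (so that $|X^2|=X^2$),
\[
\|X^2\|_{\psi_1} = \inf\{t>0:\, \E[\exp(X^2/t)]\le 2\}.
\]
The key observation is that $s\mapsto s^2$ is a strictly increasing bijection of $(0,\infty)$ onto itself, and under the substitution $t=s^2$ the conditions $\E[\exp(X^2/t)]\le 2$ and $\E[\exp(X^2/s^2)]\le 2$ coincide. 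Hence, writing $A:=\{s>0:\, \E[\exp(X^2/s^2)]\le 2\}$, the set defining $\|X^2\|_{\psi_1}$ equals $\{s^2:s\in A\}$, and since squaring is increasing we get $\inf\{s^2:s\in A\}=(\inf A)^2$, with the convention $(+\infty)^2=+\infty$ when $A=\emptyset$. This yields exactly $\|X^2\|_{\psi_1}=\|X\|_{\psi_2}^2$.

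The biconditional then follows immediately: $X$ is sub-gaussian iff $\|X\|_{\psi_2}<\infty$, which by the identity just derived is equivalent to $\|X^2\|_{\psi_1}<\infty$, i.e.\ to $X^2$ being sub-exponential. To justify the step $\inf\{s^2:s\in A\}=(\inf A)^2$ rigorously I would note that $s\mapsto \E[\exp(X^2/s^2)]$ is nonincreasing on $(0,\infty)$ — by monotone convergence, since $X^2/s^2$ decreases pointwise as $s$ increases — so $A$ is an up-set, an interval of the form $[s_0,\infty)$, $(s_0,\infty)$, or $\emptyset$, on which the elementary identity is transparent.

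The only point requiring care is the bookkeeping around the infimum: ensuring it commutes with squaring and that the degenerate case (either norm equal to $+\infty$) is handled, so that the ``if and only if'' is fully covered. No tail bound or probabilistic estimate is needed — the statement is purely a reformulation of the defining moment conditions.
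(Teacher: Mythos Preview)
Your proof is correct and is essentially the standard argument: the paper does not prove this lemma but cites \cite{vershynin2018high} (Lemma~2.7.6), where the proof is exactly the observation you make, namely that the defining conditions $\E[\exp(X^2/s^2)]\le 2$ and $\E[\exp(|X^2|/t)]\le 2$ are identified under $t=s^2$. Your additional remarks on the monotonicity of $s\mapsto \E[\exp(X^2/s^2)]$ and the handling of the degenerate case $A=\emptyset$ are more careful than necessary but certainly not wrong.
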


\begin{lemma}{\normalfont (Generalized Freedman-type Inequality \cite{harvey2019tight})}\label{lem:freedman-ineq}
Let $(\Omega, \setF, (\setF_i), P)$ be a filtered probability space, $(X_i)$ and $(K_i)$ be adapted to $(\setF_i)$, and $n\in\mathbb{N}$. Suppose for all $i \in [n]$, $K_{i-1}\geq 0$, $\E[X_i|\setF_{i-1}]= 0$, and $\E\left[\exp (\lambda X_i)| \setF_{i-1} \right] \leq \exp(\lambda^2 K_i^2)$.
Then for any $t, b \geq 0, a>0$,
    \begin{equation}
        \Pr\left(\underset{k\in [n]}{\bigcup} \left\{\sum_{i=1}^{k} X_i \geq t \text{ and } 2\sum_{i=1}^{k} K_{i-1}^2 \leq a \sum_{i=1}^{k} X_i + b \right\}\right)  \leq \exp\left(-\frac{t}{4a + 8b/t}\right).
    \end{equation}
\end{lemma}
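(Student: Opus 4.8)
The plan is to follow the classical recipe for Freedman-type inequalities: construct a nonnegative exponential supermartingale, apply a maximal inequality over $k\in[n]$, and then restrict to the target event, where the variance constraint forces the supermartingale to be large in a deterministic way.

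First I would fix the constants $t,a,b$ and write $S_k=\sum_{i=1}^k X_i$ and $V_k=\sum_{i=1}^k K_{i-1}^2$. For a scalar $\lambda\ge 0$ to be pinned down at the end (depending only on $t,a,b$), set
\[
Y_k=\exp\!\left(\lambda S_k-\lambda^2 V_k\right),\qquad Y_0=1 .
\]
Because $K_{i-1}$ is $\setF_{i-1}$-measurable and $V_k-V_{k-1}=K_{k-1}^2$, the conditional moment generating function hypothesis gives
\[
\E[Y_k\mid\setF_{k-1}]=Y_{k-1}\,\exp(-\lambda^2K_{k-1}^2)\,\E[\exp(\lambda X_k)\mid\setF_{k-1}]\le Y_{k-1},
\]
so $(Y_k)_{k\le n}$ is a nonnegative supermartingale with $\E[Y_0]=1$. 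Ville's maximal inequality (equivalently, Doob's inequality for nonnegative supermartingales) then yields $\Pr\!\big(\max_{k\le n}Y_k\ge e^{s}\big)\le e^{-s}$ for all $s\ge 0$.

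Next I would argue that on $E:=\bigcup_{k\le n}\{S_k\ge t,\ 2V_k\le aS_k+b\}$ the supermartingale must be large. On $E$ some index $k$ satisfies $V_k\le\tfrac12(aS_k+b)$ and $S_k\ge t$, hence
\[
\lambda S_k-\lambda^2V_k\ \ge\ \lambda\!\left(1-\tfrac{\lambda a}{2}\right)S_k-\tfrac{\lambda^2 b}{2}.
\]
Restricting to $\lambda\le 2/a$ makes the coefficient of $S_k$ nonnegative, so $S_k\ge t$ gives the \emph{deterministic} bound $\lambda S_k-\lambda^2V_k\ge \lambda t-\tfrac{\lambda^2}{2}(at+b)$, i.e.\ $E\subseteq\{\max_{k\le n}Y_k\ge \exp(\lambda t-\tfrac{\lambda^2}{2}(at+b))\}$. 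Combining with the maximal inequality gives $\Pr(E)\le\exp\!\big(-\lambda t+\tfrac{\lambda^2}{2}(at+b)\big)$ for every $\lambda\in[0,2/a]$. Optimizing the exponent over this range (the minimizer is $\lambda^\star=t/(at+b)\in[0,1/a]$, which is deterministic) produces $\Pr(E)\le\exp\!\big(-t^2/(2(at+b))\big)=\exp(-t/(2a+2b/t))$; since $2a+2b/t\le 4a+8b/t$, this is stronger than the stated bound $\exp(-t/(4a+8b/t))$ and hence implies it, the stated constant simply leaving slack.

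The main obstacle is the \emph{uniform-over-$k$} (maximal) character of the claim: the event is a union over all $k\le n$, so a fixed-time Chernoff bound is insufficient and one must use the maximal inequality for nonnegative supermartingales, while taking care that $\lambda$ is a deterministic function of $t,a,b$ (which it is) and not of the sample path. A secondary but real issue is index bookkeeping: the predictable variance $V_k$ in the event and the variance proxy in $Y_k$ must be aligned at the $K_{i-1}$ level so that the exponential factor can be pulled out of $\E[\,\cdot\mid\setF_{k-1}]$; if instead one controls the step-$i$ conditional MGF by $K_i^2$, the boundary discrepancy $K_k^2-K_0^2$ must be absorbed, which is precisely the sort of slack that the looser constants $4a+8b/t$ are there to accommodate.
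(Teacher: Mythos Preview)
The paper does not prove this lemma; it is quoted verbatim as a technical tool from \cite{harvey2019tight} and used as a black box in the proof of Lemma~\ref{lem:highprob-results}. There is therefore no ``paper's own proof'' to compare against.

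Your argument is the standard one and is correct: build the exponential process $Y_k=\exp(\lambda S_k-\lambda^2 V_k)$, verify it is a nonnegative supermartingale, apply Ville's maximal inequality, and then on the event $E$ use the variance constraint to get a deterministic lower bound on $\max_k Y_k$, finally choosing $\lambda=t/(at+b)$. You in fact obtain the sharper exponent $-t/(2a+2b/t)$, from which the stated bound follows. Your closing remark about the index mismatch is exactly right: as written, the hypothesis controls $\E[\exp(\lambda X_i)\mid\setF_{i-1}]$ by $\exp(\lambda^2 K_i^2)$ with $K_i$ only $\setF_i$-measurable, which does not literally make $Y_k$ a supermartingale; the intended (and standard) hypothesis uses the predictable $K_{i-1}^2$, matching the $\sum_{i=1}^k K_{i-1}^2$ in the event. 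With that reading your proof goes through cleanly, and the looser constants in the stated bound absorb any residual boundary term if one insists on the $K_i$ indexing.
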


\subsection{Proof of Theorem \ref{thm:one-level-highprob}}

We start with presenting the lemma below which leverages inequalities in Appendix \ref{sec:proof-high-prob} to show a high-probability upper bound for terms involving in the previous analysis.

\begin{lemma}\label{lem:highprob-results}
Under the conditions of Lemma \ref{lem:merit-function-1-level} and Assumption \ref{aspt:oracle-highprob-1-level}, for any $\delta_1, \delta_2, \delta_3, a >0$, we have
\begin{itemize}[leftmargin=1.6em]
    \item[(a)]with probability at least $1-\delta_1$, 
    $\sum_{k=0}^{N}\tau_k^2\|\Delta^{k+1}\|^2\leq K^2 \log (2/\delta_1) \sum_{k=0}^{N} \tau_k^2;$
    \item[(b)]with probability at least $1-\delta_2$,
    \begin{equation*}
    \sum_{k=0}^{N}\tau_k^2\sum_{i=0}^{k-1} \alpha_{i,k} \| \Delta^{i+1} \|^2 \leq K^2 \log (2/\delta_2) \sum_{k=0}^{N} \tau_k^2,
    \end{equation*}
where $\alpha_{i,k} > 0$ and   $ \sum_{i=0}^{k-1}{\alpha_{i,k}}=1$;
    \item[(c)]with probability at least $1-\delta_3$,
\begin{equation*}
    \begin{split}
    &\sum_{k=0}^{N} \langle \Delta^{k+1}, 2\alpha\tau_k(1-\tau_{k}) [ \nabla F(\vx^{k}) - \vz^{k}] + 2\alpha\tau_k(\nabla F(\vx^{k+1}) - \nabla F(\vx^{k})) + \tau_k \vd^k \rangle \\
    &\leq 4a\log(1/\delta_3) + \frac{\beta^2 K^2}{a L_{\nabla F}^4}\sum_{k=0}^{N} \tau_k^2 (1-\tau_{k}) \left\| \nabla F(\vx^{k}) - \vz^{k}\right\|^2 +  \frac{K^2}{a}\sum_{k=0}^{N} \tau_k^2 (4+\frac{\beta^2\tau_k}{L_{\nabla F}^2})\left\| \vd^k\right\|^2.
    \end{split}
\end{equation*}
\end{itemize}
\end{lemma}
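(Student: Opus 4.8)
\textbf{Proof proposal for Lemma~\ref{lem:highprob-results}.}

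The plan is to derive each of the three high-probability bounds by applying the appropriate concentration tool to the martingale-difference structure created by Assumption~\ref{aspt:oracle-highprob-1-level}, namely that $\Delta^{k+1} = \nabla F(\vx^k) - \vJ_1^{k+1}$ is conditionally mean-zero given $\setF_k$ with $\|\Delta^{k+1}\| \,\big|\, \setF_k$ being $K$-sub-Gaussian. For part (a), I would not use a martingale inequality at all: since $\|\Delta^{k+1}\|$ is $K$-sub-Gaussian conditioned on $\setF_k$, the random variable $\|\Delta^{k+1}\|^2$ is conditionally $K^2$-sub-exponential (by the lemma ``sub-exponential is sub-gaussian squared''), so $\E[\exp(\|\Delta^{k+1}\|^2/K^2)\mid \setF_k]\le 2$. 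Writing $S = \sum_{k=0}^N \tau_k^2 \|\Delta^{k+1}\|^2$ and $\Sigma = \sum_{k=0}^N \tau_k^2$, a direct Chernoff/Markov argument on $\exp(S/(K^2\Sigma))$, peeling off one conditional expectation at a time from $k=N$ down to $k=0$ using $\tau_k^2/\Sigma \le 1$ and Jensen, gives $\E[\exp(S/(K^2\Sigma))] \le 2$, and then Markov's inequality yields $\Pr(S \ge K^2 \log(2/\delta_1)\,\Sigma) \le \delta_1$. Part (b) is essentially identical: for each fixed $k$, $\sum_{i=0}^{k-1}\alpha_{i,k}\|\Delta^{i+1}\|^2$ is a convex combination (since $\sum_i \alpha_{i,k}=1$), so by Jensen $\exp$ of it is bounded by the corresponding convex combination of $\exp(\|\Delta^{i+1}\|^2/K^2)$ terms, each conditionally bounded by $2$; the same peeling argument over the outer sum (with weights $\tau_k^2/\Sigma$) then gives the claimed bound with probability $1-\delta_2$.

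Part (c) is the one that genuinely requires a martingale concentration inequality, and this is where Lemma~\ref{lem:freedman-ineq} (the generalized Freedman-type inequality of \cite{harvey2019tight}) enters. I would set $X_{k+1} := \langle \Delta^{k+1}, \Lambda^k \rangle$ where $\Lambda^k := 2\alpha\tau_k(1-\tau_k)[\nabla F(\vx^k)-\vz^k] + 2\alpha\tau_k(\nabla F(\vx^{k+1})-\nabla F(\vx^k)) + \tau_k \vd^k$. The subtlety is that $\Lambda^k$ involves $\vx^{k+1}$, which depends on $\tilde{\vy}^k = \texttt{ICG}(\vx^k,\vz^k,\beta_k,t_k,\delta)$ — but the \texttt{ICG} routine is \emph{deterministic} given $(\vx^k,\vz^k)$, so $\Lambda^k$ is in fact $\setF_k$-measurable, hence $\E[X_{k+1}\mid\setF_k] = \langle \E[\Delta^{k+1}\mid\setF_k],\Lambda^k\rangle = 0$. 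Next I need the conditional MGF bound: conditioned on $\setF_k$, $\langle \Delta^{k+1},\Lambda^k\rangle$ is a one-dimensional projection of a $K$-sub-Gaussian-norm vector, so it is $cK\|\Lambda^k\|$-sub-Gaussian for an absolute constant $c$ and hence (Lemma~\ref{lem:mgf-subgaussian}) $\E[\exp(\lambda X_{k+1})\mid\setF_k]\le \exp(\lambda^2 K_{k+1}^2)$ with $K_{k+1} = c' K\|\Lambda^k\|$ for a suitable absolute constant; I will absorb constants so that the bound takes the exact form required by Lemma~\ref{lem:freedman-ineq}. Then $2\sum_{i=1}^k K_i^2$ is controlled by $2(c')^2 K^2 \sum_{i} \|\Lambda^{i-1}\|^2$, and expanding $\|\Lambda^k\|^2 \lesssim \alpha^2\tau_k^2(1-\tau_k)^2\|\nabla F(\vx^k)-\vz^k\|^2 + \alpha^2\tau_k^2 L_{\nabla F}^2\|\tilde{\vd}^k\|^2 + \tau_k^2\|\vd^k\|^2$ and using $\|\tilde{\vd}^k\|^2 \le 2\|\vd^k\|^2 + 2\|\tilde{\vy}^k-\vy^k\|^2$ together with the \texttt{ICG} guarantee (Lemma~\ref{lem:fw}) to fold $\|\tilde{\vy}^k-\vy^k\|^2$ into the $H_k(\tilde{\vy}^k)-H_k(\vy^k)$ budget. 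Applying Lemma~\ref{lem:freedman-ineq} with $t = 4a\log(1/\delta_3)$ and matching $a$ (the free parameter) to the coefficient $\alpha = \rho/L_{\nabla F}$-type scaling, the bound $\exp(-t/(4a+8b/t))$ will be at most $\delta_3$ once $b$ is chosen small relative to $t$, giving the stated inequality after substituting $\alpha = \beta/(4L_{\nabla F}^2)$.

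The main obstacle I anticipate is \emph{not} the concentration machinery itself but the bookkeeping in part (c): getting $\|\Lambda^k\|^2$ decomposed so that the ``variance proxy'' $2\sum K_i^2$ lands in exactly the form $a\sum_i X_i + b$ required by Lemma~\ref{lem:freedman-ineq}, which forces a self-bounding trick — the variance terms must be re-expressed in terms of the very quantities ($\|\vd^k\|^2$, $\|\nabla F(\vx^k)-\vz^k\|^2$) that appear on the left side of the telescoped inequality from Lemma~\ref{lem:merit-function-1-level}, so that later (in the proof of Theorem~\ref{thm:one-level-highprob}) they can be absorbed into the left-hand side. Choosing the constant $a$ large enough that the absorbed fraction is, say, at most half of the corresponding descent term, while keeping $4a\log(1/\delta_3)$ of the optimal order $K^2\log(1/\delta_3)$ times a problem constant, is the delicate balancing step; everything else (parts (a) and (b), the measurability of $\Lambda^k$, the sub-Gaussian-to-sub-exponential passage) is routine given the tools already assembled in Appendix~\ref{sec:proof-high-prob}.
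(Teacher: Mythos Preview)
Your treatment of parts (a) and (b) is correct and essentially identical to the paper's: both arguments amount to the triangle inequality for the sub-exponential Orlicz norm (your Jensen-based ``peeling'' is an equivalent phrasing), followed by Markov's inequality on the resulting $\psi_1$-bound. Your side remark that $\Lambda^k$ is $\setF_k$-measurable because \texttt{ICG} is deterministic given $(\vx^k,\vz^k)$ is a point the paper uses silently; it is good that you make it explicit.

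For part (c) you have the right tool and the right inputs --- Lemma~\ref{lem:freedman-ineq} applied to $X_{k+1}=\langle\Delta^{k+1},\Lambda^k\rangle$ with $K_{k+1}\propto K\|\Lambda^k\|$ --- but a misconception about how the inequality is invoked. You describe trying to decompose $2\sum K_i^2$ so that it ``lands in exactly the form $a\sum X_i + b$''; this cannot work, because $\sum X_i$ is a martingale that can be negative while $\sum K_i^2\ge 0$. The condition $2\sum K_i^2\le a\sum X_i + b$ is not a hypothesis to be verified but part of the \emph{bad} event in Lemma~\ref{lem:freedman-ineq}. The paper simply sets $b=0$ and $t=4a\log(1/\delta_3)$, so the bad-event probability is $\exp(-t/4a)=\delta_3$; on the complement, for every $k$ either $\sum X_i<t$ or $2\sum K_i^2>a\sum X_i$, and in both cases $\sum X_i\le t+\tfrac{2}{a}\sum K_i^2 = 4a\log(1/\delta_3)+\tfrac{2cK^2}{a}\sum_k\|\Lambda^k\|^2$. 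Expanding $\|\Lambda^k\|^2$ via $\|p+q\|^2\le 2\|p\|^2+2\|q\|^2$ and the Jensen step on the convex combination $(1-\tau_k)[\nabla F(\vx^k)-\vz^k]+\tau_k\cdot\tfrac{1}{\tau_k}(\nabla F(\vx^{k+1})-\nabla F(\vx^k))$ then gives the stated bound with no self-bounding at this stage. The absorption of the $\|\vd^k\|^2$ and $\|\nabla F(\vx^k)-\vz^k\|^2$ terms into the telescoped inequality that you anticipate does occur, but only later, in the proof of Theorem~\ref{thm:one-level-highprob}. Incidentally, your plan to track $\|\tilde{\vy}^k-\vy^k\|^2$ separately is cleaner than what the paper does (it writes $\|\vd^k\|$ where $\|\tilde{\vd}^k\|$ actually arises from $\nabla F(\vx^{k+1})-\nabla F(\vx^k)$); either version is ultimately harmless since both are bounded by $D_\setX$.
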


\begin{proof}[Proof of Lemma \ref{lem:highprob-results}]
We first show (a). Using the law of total expectation, we have $\E \left[ \exp \left( \frac{\|\tau_k \Delta^{k+1}\|^2}{\tau_k^2 K^2} \right) \right] \leq 2$, which implies that $\|\tau_k \Delta^{k+1}\|^2$ is $\tau_k^2 K^2$-sub-exponential. Thus, we have with probability at least $1-\delta_1$,
\begin{equation}
    \sum_{k=0}^{N} \tau_k^2 \| \Delta^{k+1} \|^2 \leq K^2 \log (2/\delta_1) \sum_{k=0}^{N} \tau_k^2.
\end{equation}
We then show (b). Let $Z_k = \tau_k^2\left\{\sum_{i=0}^{k-1} \alpha_{i,k} \left\| \Delta^{i+1} \right\|^2 \right\} \forall k\geq 0$. Note that for all $k\geq 0$, $\|\Delta^{k+1}\|^2$ is $K^2$-sub-exponential, which further implies that the sub-exponential norm of $Z_k ~(k>0)$ satisfies $\|Z_k\|_{\psi_1} \leq \tau_k^2 K^2$. Therefore, we have for any $\delta_2>0$, with probability at least $1-\delta_2$,
\begin{equation}
    \sum_{k=0}^{N} Z_k \leq K^2 \log (2/\delta_2) \sum_{k=0}^{N} \tau_k^2.
\end{equation}
To prove (c),  we apply Lemma \ref{lem:mgf-subgaussian} and Lemma \ref{lem:freedman-ineq} with
\begin{equation*}
    \begin{split}
        X_i &= \left\langle \Delta^{k+1}, 2\alpha\tau_k \left\{(1-\tau_{k}) [ \nabla F(\vx^{k}) - \vz^{k}] + \nabla F(\vx^{k+1}) - \nabla F(\vx^{k})\right\} + \tau_k \vd^k \right\rangle, \\
        K_i &= \sqrt{c} K\left\|2\alpha\tau_k\left\{(1-\tau_{k}) [ \nabla F(\vx^{k}) - \vz^{k}] + \nabla F(\vx^{k+1}) - \nabla F(\vx^{k})\right\} + \tau_k \vd^k\right\|, \\
        b &= 0, t = 4a \log (1/\delta_3). 
    \end{split}
\end{equation*}
Noting that $\alpha = \frac{\beta}{4L_{\nabla F}^2}$, we obtain that for all $a >0$ with probability at least $1-\delta_3$, $\sum_{i=0}^{N} X_i \leq 4a\log(1/\delta_3)$ and
\begin{align*}
    \sum_{i=0}^{N} X_i &\leq \frac{2cK^2}{a}\sum_{k=0}^{N} \left\|2\alpha\tau_k\left\{(1-\tau_{k}) [ \nabla F(\vx^{k}) - \vz^{k}] + \nabla F(\vx^{k+1}) - \nabla F(\vx^{k})\right\} + \tau_k \vd^k \right\|^2\\
    &\leq \frac{4cK^2}{a}\sum_{k=0}^{N} \tau_k^2 \left\{4\alpha^2\left\|(1-\tau_{k}) [ \nabla F(\vx^{k}) - \vz^{k}] + \nabla F(\vx^{k+1}) - \nabla F(\vx^{k})\right\|^2 + \left\|\vd^k\right\|^2\right\}\\
    &\leq \frac{4cK^2}{a}\sum_{k=0}^{N} \tau_k^2 \left\{4\alpha^2(1-\tau_{k}) \left\| \nabla F(\vx^{k}) - \vz^{k}\right\|^2 + (1+4\alpha^2 L_{\nabla F}^2 \tau_k)\left\| \vd^k\right\|^2\right\}\\
    &= \frac{c\beta^2 K^2}{a L_{\nabla F}^4}\sum_{k=0}^{N} \tau_k^2 (1-\tau_{k}) \left\| \nabla F(\vx^{k}) - \vz^{k}\right\|^2 +  \frac{cK^2}{a}\sum_{k=0}^{N} \tau_k^2 (4+\frac{\beta^2\tau_k}{L_{\nabla F}^2})\left\| \vd^k\right\|^2,
\end{align*}
where the third inequality comes from the convexity of $\|\cdot\|^2$ and the Lipschitzness of $\nabla F$.
\end{proof}

Provided with the above lemma and Lemma \ref{lem:merit-function-1-level}, we now present the complete proof of Theorem \ref{thm:one-level-highprob}.

\begin{proof}[Proof of Theorem~\ref{thm:one-level-highprob}]
Given the update rule of $\{\vz^k\}$ and the fact that $\tau_0 =1$, we can obtain
\begin{equation*}
    \vz^k = \sum_{i=0}^{k-1} \alpha_{i,k} \vJ_{1}^{i+1}, \quad \alpha_{i,k} = \frac{\tau_i}{\Gamma_{i+1}}\Gamma_{k}~~ 1\leq i\leq k, \quad \sum_{i=0}^{k-1} \alpha_{i,k} = 1 ~~k\geq 1
\end{equation*}
where $\Gamma_k = \Gamma_1 \prod_{i=1}^{k-1}(1-\tau_i)$ and $\Gamma_1 = 1$. Thus,
\begin{equation*}
    \begin{split}
        \left\| \vz^{k+1} - \vz^k \right\|^2 &= \tau_k^2 \left\| \vJ_1^{k+1} - \vz^k \right\|^2 \leq 2\tau_k^2 \left\{\left\| \vJ_{1}^{k+1} \right\|^2 + \left\| \sum_{i=0}^{k-1} \alpha_{i,k} \vJ_{1}^{i+1} \right\|^2\right\}\\
        &\leq 2\tau_k^2\left\{\left\| \vJ_{1}^{k+1} \right\|^2 +\sum_{i=0}^{k-1} \alpha_{i,k} \left\| \vJ_{1}^{i+1} \right\|^2\right\}\\
        &\leq 4\tau_k^2\left\{\left\| \Delta^{k+1} \right\|^2 + \left\|\nabla F(\vx^{k})\right\|^2 +\sum_{i=0}^{k-1} \alpha_{i,k} \left[\left\| \Delta^{i+1} \right\|^2 + \|\nabla F(\vx^{i})\|^2\right]\right\}\\
        &\leq 4\tau_k^2\left\{\left\| \Delta^{k+1} \right\|^2 +\sum_{i=0}^{k-1} \alpha_{i,k} \left\| \Delta^{i+1} \right\|^2 + 2L_{F}^2 \right\}\\
    \end{split}
\end{equation*}
where the second inequality comes from the convexity of $\|\cdot\|^2$. Therefore, we have
\begin{equation*}
        \sum_{k=0}^{N} \| \vz^{k+1} - \vz^k \|^2 \leq 4\sum_{k=0}^{N} \tau_k^2 \| \Delta^{k+1} \|^2 + 4\sum_{k=0}^{N}\tau_k^2\sum_{i=0}^{k-1} \alpha_{i,k} \| \Delta^{i+1} \|^2 + 8L_F^2 \sum_{k=0}^{N} \tau_k^2
\end{equation*}
Applying Lemma \ref{lem:highprob-results} with $\delta_1 = \delta_2 = \delta_3 = \delta/3$ and $a=\frac{16c \beta K^2}{L_{\nabla F}^2}$ together with Lemma \ref{lem:merit-function-1-level},  we have with probability at least $1-\delta$,
\begin{align*}
    \sum_{k=0}^{N} \mathbf{R}_k =& \sum_{k=0}^{N} \langle \Delta^{k+1}, 2\alpha\tau_k(1-\tau_{k}) [ \nabla F(\vx^{k}) - \vz^{k}] + 2\alpha\tau_k(\nabla F(\vx^{k+1}) - \nabla F(\vx^{k})) + \tau_k \vd^k \rangle\\
    & + (\alpha + 2 L_{\eta})\sum_{k=0}^{N}\tau_k^2 \| \Delta^{k+1} \|^2 + 2L_\eta \sum_{k=0}^{N}\tau_k^2\sum_{i=0}^{k-1} \alpha_{i,k} \| \Delta^{i+1} \|^2\\
    &+ \left[\frac{L_{\nabla F}+ L_{\nabla\eta}}{2} D_{\setX}^2 + 4L_\eta L_F^2\right] \sum_{k=0}^{N}\tau_k^2\\
    \leq & \frac{64\beta K^2}{L_{\nabla F}^2}\log(3/\delta) + \frac{\beta}{16L_{\nabla F}^2}\sum_{k=0}^{N} \tau_k^2 (1-\tau_{k}) \left\| \nabla F(\vx^{k}) - \vz^{k}\right\|^2 +  (\frac{L_{\nabla F}^2}{4\beta}+\frac{\beta}{16})\sum_{k=0}^{N} \tau_k^2\left\| \vd^k\right\|^2\\
    &+  \left[ (\frac{\beta}{4L_{\nabla F}^2}+4L_{\eta}) K^2 \log (6/\delta)+ \frac{L_{\nabla F}+ L_{\nabla\eta}}{2} D_{\setX}^2 + 4L_\eta L_F^2\right] \sum_{k=0}^{N}\tau_k^2
\end{align*}
Thus, noting that $\|\vd^k\|^2 \leq D_{\setX}^2~ \forall k\geq0$, we have with probability at least $1-\delta$, 
\begin{equation*}
\begin{split}
     &\beta \sum_{k=0}^{N}\tau_k \bigg(\| \vd^k \|^2 +  \frac{1}{4L_{\nabla F}^2}\| \nabla F(\vx^k) - \vz^k\|^2\bigg) \\
     &\leq 4W_{\alpha}(\vx^0, \vu^0) +  \bigg(12+ \frac{16L_{\nabla F}^2 }{\beta^2}\bigg) \sum_{k=0}^{N} \tau_k \left( H_k(\tilde{\vy}^k) - H_k(\vy^k)\right)  +\frac{256\beta K^2}{L_{\nabla F}^2}\log(3/\delta) \\
    &+  \left[ \left(\frac{\beta}{L_{\nabla F}^2}+16L_{\eta}\right) K^2 \log (6/\delta)+ \left(\frac{L_{\nabla F}^2}{\beta}+\frac{\beta}{4}+2L_{\nabla F}+ 2L_{\nabla\eta}\right) D_{\setX}^2 + 16L_\eta L_F^2\right] \sum_{k=0}^{N}\tau_k^2
\end{split}
\end{equation*}
Following the same arguments as in the proof of Theorem \ref{thm:main-T-level}, we have with probability at least $1-\delta$,
\begin{equation*}
    \min_{k=1,\dots, N} V(\vx^k, \vz^k) \leq \mathcal{O}\left(\frac{ K^2 \log (1/\delta)}{\sqrt{N}}\right)
\end{equation*}

\end{proof}

\end{document}